\def\refstepcounter@optarg[#1]#2{%
  \cref@old@refstepcounter{#2}%
  \cref@constructprefix{#2}{\cref@result}%
  \@ifundefined{cref@#1@alias}%
    {\def\@tempa{#1}}%
    {\def\@tempa{\csname cref@#1@alias\endcsname}}%
  \protected@edef\cref@currentlabel{%
    [\@tempa][\arabic{#2}][\cref@result]%
    \csname p@#2\endcsname\csname the#2\endcsname}}%
\pgfplotsset{compat=1.18}
\def\va{{\boldsymbol a}}
\def\vb{{\boldsymbol b}}
\def\vc{{\boldsymbol c}}
\def\ve{{\boldsymbol e}}
\def\vn{{\boldsymbol n}}
\def\vv{{\boldsymbol v}}
\def\vx{{\boldsymbol x}}
\def\vy{{\boldsymbol y}}
\def\vz{{\boldsymbol z}}
\def\wvx{\widehat{{\boldsymbol x}}}
\def\wvy{\widehat{{\boldsymbol y}}}
\def\vomega{\boldsymbol{\omega}}
\def\vtau{\boldsymbol{\tau}}
\def\vtheta{\boldsymbol{\theta}}
\def\cG{\mathcal{G}}
\def\cH{\mathcal{H}}
\def\cI{\mathcal{I}}
\def\cJ{\mathcal{J}}
\def\cL{\mathcal{L}}
\def\cQ{\mathcal{Q}}
\def\cS{\mathcal{S}}
\def\argmin{\mathop{\rm argmin}}
\newcommand\norm[1]{\left\Vert#1\right\Vert}
\newcommand\abs[1]{\left|#1\right|}
\newcommand\LS[1]{\left\|#1\right\|_2^2}
\definecolor{ecolor}{RGB}{1,126,218}%
\definecolor{winered}{rgb}{0.5,0,0}
\definecolor{epubblue}{RGB}{1,126,218}
\definecolor{egreen}{RGB}{0,120,2}
\definecolor{ecyan}{RGB}{0,175,152}
\definecolor{eblue}{RGB}{20,50,104}
\definecolor{sakura}{RGB}{255,183,197}
\definecolor{brown}{RGB}{109,62,18}
\definecolor{lightgrey}{rgb}{0.9,0.9,0.9}
\definecolor{frenchplum}{RGB}{190,20,83}
\definecolor{myfoot}{rgb}{0.5,0.2,0.5}
\definecolor{darkblue}{rgb}{0.1,0,0.85}
\definecolor{studentbrown}{RGB}{124,71,50}
\definecolor{bronze}{RGB}{205,127,50}
\definecolor{grotto}{RGB}{14,130,212}
\definecolor{purplea}{RGB}{179,137,240}
\definecolor{greenb}{RGB}{111,186,193}
\definecolor{greena}{RGB}{143,169,73}
\definecolor{redb}{RGB}{217,129,118}
\definecolor{yellowa}{RGB}{230,155,96}
\definecolor{reda}{RGB}{221,109,92}
\definecolor{purpleb}{RGB}{169,140,186}
\definecolor{mygreen}{RGB}{122,187,114}
\definecolor{darkblue}{RGB}{100,130,178}
\definecolor{halfblue}{RGB}{168,194,215}
\definecolor{yellowb}{RGB}{249,236,186}
\definecolor{orangeh}{RGB}{237,192,145}
\definecolor{MatchaGreen}{HTML}{73C088}		% 抹茶绿B7C6B3
\definecolor{MelancholyBlue}{HTML}{9EAABA}	% melancholy: 沮丧
\definecolor{PureBlue}{HTML}{80A3D0}
\definecolor{SchembriumYellow}{HTML}{fbd26a}	% 申博太阳城黄
\definecolor{PeachRed}{HTML}{EA868F}
\definecolor{fadedgold}{HTML}{D9CBB0}		% 褪色金
\definecolor{saturatedgold}{HTML}{F0E0C2}	% staurated: 饱和
\definecolor{elegantblue}{HTML}{C4CCD7}		% elegant: 优雅
\definecolor{ivory}{HTML}{F1ECE6}			% 象牙
\definecolor{gloomypruple}{HTML}{CCC1D2}	% 阴沉紫
\definecolor{tealgreen}{RGB}{32, 178, 170}% 青绿色（蓝绿色），呈现清新自然的色调
\definecolor{brightorange}{RGB}{255, 140, 0}% 橙色，明亮活力的色调
\definecolor{crimsonred}{RGB}{220, 20, 60}% 深红色（猩红），鲜艳强烈的红色调
\definecolor{violetpurple}{RGB}{138, 43, 226}% 紫罗兰色，深紫色调带有神秘感
\definecolor{dodgerblue}{RGB}{30, 144, 255}% 道奇蓝，明亮的蓝色调
\definecolor{darkgolden}{RGB}{229, 193, 0} % 基于{255, 215, 0}调暗10%% 暗金色（比纯金暗10%），温暖的黄色调
\definecolor{deepgreen}{RGB}{0, 116, 0}% 深绿色，浓郁的绿色调
\definecolor{copperbrown}{RGB}{198, 122, 55}% 铜色，温暖的棕橙色调
\definecolor{emerald}{RGB}{58,191,153} % 祖母绿，高贵典雅，富有质感
\newcommand{\rr}{\mathbb{R}}
\title{
Truncated Huber Penalty for Sparse Signal Recovery with Convergence Analysis \thanks{This work is supported in part by the National Natural Science Foundation of China (Grant No. 12361089), the National Group for Scientific Computation (GNCS-INDAM), Research Projects 2025.}
}
\author{Li Yang\thanks{School of Mathematics and Statistics, Hunan Normal University, Changsha, Hunan, China
 ({li.yang5@unibo.it)}.}
\and Serena Morigi \thanks{Department of Mathematics, University of Bologna, 
40127 Bologna, Italy  ({serena.morigi@unibo.it})}
\and Michael K. Ng\thanks{Department of Mathematics, Hong Kong Baptist University, Kowloon Tong, Hong Kong,  China  ({michael-ng@hkbu.edu.hk}).}
\and You-wei Wen\thanks{Corresponding author. Key Laboratory of Computing and Stochastic Mathematics (LCSM), School of Mathematics and Statistics, Hunan Normal University, Changsha, Hunan, China 
 ({wenyouwei@gmail.com})} 
}
\begin{document}

\maketitle

\begin{abstract}
Sparse signal recovery from under-determined systems presents significant challenges when using conventional $L_0$ and $L_1$ penalties, primarily due to computational complexity and estimation bias. This paper introduces a truncated Huber penalty, a non-convex metric that effectively bridges the gap between unbiased sparse recovery and differentiable optimization. The proposed penalty applies quadratic regularization to small entries while truncating large magnitudes, avoiding non-differentiable points at {optimal solutions}.
Theoretical analysis demonstrates that, for an appropriately chosen threshold, any {$(s,\mu)$}-sparse solution recoverable via conventional penalties remains a local optimum under the truncated Huber function. This property allows the exact and robust recovery theories developed for other penalty regularization functions to be directly extended to the truncated Huber function. 
To solve the optimization problem, we develop a block coordinate descent (BCD) algorithm with \emph{finite-step convergence guarantees} under spark conditions.
{Numerical experiments validate the effectiveness and robustness of the proposed method in both synthetic and real scenarios.
Furthermore, we demonstrate the flexibility of the truncated Huber framework through two extensions: one to an adaptively weighted variant inspired by sorted penalties, and another to the gradient domain for applications such as signal denoising and image smoothing. }%Numerical experiments are conducted to validate the effectiveness and robustness of the proposed approach. 
%Furthermore, we extend the truncated Huber-penalized model to the gradient domain, illustrating its applicability in signal denoising and image smoothing.
\end{abstract}

\begin{keywords}
truncated Huber penalty, sparse signal recovery, block coordinate descent, signal denoising, image smoothing, non-convex optimization   
\end{keywords}

\begin{MSCcodes}
90C26, 90C90, 65K10,  49N45, 
\end{MSCcodes}

\section{Introduction}
Signal recovery is a fundamental problem in signal processing, with applications in signal reconstruction \cite{1580791,tan2018digital,li2023total}, image processing \cite{rudin1992nonlinear}, compressed sensing \cite{rath2008}, and machine learning \cite{10.1111/j.2517-6161.1996.tb02080.x}.  
This task is mathematically formulated as recovering an unknown signal $\vx \in \mathbb{R}^{n}$ from a set of linear measurements. In the noise-free case, the relationship between the signal and the observations is given by:
\[\vb = A\vx,\] 
where $A \in \mathbb{R}^{m\times n}$ is a sensing matrix with full row rank, serving as a transformation or projection operator from the original signal space to the measurement space. 
In practical scenarios, observations are typically corrupted by noise, leading to the model:
\[\vb = A\vx+\vn,\]
where $\vn\in\rr^m$ represents noise with $\|\vn\|_2^2 \leq \epsilon^2$ for a given noise level $\epsilon$. 
We assume $\vb \neq \mathbf{0}$ throughout the paper.

The objective of signal recovery is to find the best estimate $\hat{\vx}$ of the unknown signal $\vx$ by minimizing a penalty term $\Phi:\rr^n\rightarrow\rr$ subject to the constraint:
\begin{equation}\label{constrain_modela}
\min_{\vx \in \mathcal{S}} \Phi(\vx),
\end{equation}
where the feasible set $\mathcal{S}$ is defined as $\mathcal{S}=\{\vx \mid A\vx = \vb\}$ for the noise-free case, or $\mathcal{S}= \{\vx \mid \|A\vx - \vb\|_2^2 \leq \epsilon^2\}$ for the noisy case.

Over the years, a wide variety of penalty functions have been proposed and studied in the literature \cite{doi:10.1137/S0097539792240406, doi:10.1137/S003614450037906X, 4303060, lv2009unified, yin2014ratio}, each depends on difference signal characteristics and prior knowledge.
In many applications, signals are often assumed to be sparse or compressible, meaning that only a few entries of the signal are significantly non-zero, while the majority are either zero or close to zero. 
This sparsity assumption enables signal representation using a reduced number of parameters, allowing for recovery from relatively few measurements, thereby reducing the sampling and storage requirements.

The classical $L_{0}$ norm penalty \cite{doi:10.1137/S0097539792240406}, which directly quantifies sparsity by counting the number of non-zero entries in a vector (i.e., $\Phi(\vx)=\norm{\vx}_0$), unfortunately poses a NP-hard optimization problem due to its discrete and discontinuous property. 
As an alternative, the basis pursuit problem (BP), which adopts the $L_{1}$  norm penalty \cite{doi:10.1137/S003614450037906X}, has been the subject of extensive research. 
The $L_{1}$ norm serves as the convex envelope of the $L_{0}$ norm and is computationally effective in inducing sparsity among convex penalties. 
Nevertheless, the $L_{1}$ norm penalty {exhibits a limitation} in that it tends to underestimate high-amplitude entries and fails to distinguish between large and small non-zero entries.
Subsequently, non-convex penalties, such as {the capped $L_{1}$ penalty ($CL_1$) \cite{pmlr-v28-gong13a},}  smoothly clipped absolute deviation ($SCAD$) \cite{fan2001variable}, minimax concave penalty ($MCP$) \cite{zhang2010nearly}, the $L_{p}$ penalty ($0 < p < 1$)  \cite{4303060, lai2013improved}, the transformed $L_{1}$ penalty ($TL_1$)  \cite{lv2009unified,zhang2018minimization,huo2022stable},  the $L_{1}\mbox{-}L_2$ penalty \cite{yin2015minimization,ge2021new,ma2017truncated,yan2017sparse}, the $L_1/L_2$ penalty \cite{hoyer2002non, wang2021limited,tao2022minimization,wang2024sorted,tao2024partly} and so on, have emerged as areas of interest. 
On the computational side, non-convex penalty are generally more challenging to minimize. 
Algorithms for directly minimizing  these penalties include compressive sampling matching pursuit (CoSaMP) \cite{do2008sparsity} and iterative hard thresholding (IHT) \cite{needell2009cosamp,he2024relu} for $L_0$ minimization, iteratively reweighted least squares (IRLS)  \cite{daubechies2010iteratively} for $L_1$ penalized problems,  
convex non-convex (CNC) \cite{lanza2019sparsity} for $MCP$ penalized problems,
{alternating direction method of multipliers (ADMM) \cite{tao2022minimization} for $L_1/L_2$ penalty model,} and difference of convex functions algorithm (DCA) \cite{lou2015computational}  for $L_{1 }\mbox{-}L_2$ minimization and so on.

Conventional sparsity-promoting penalties quantify sparsity solely by counting non-zero entries, which proves inadequate for real-world compressible signals. Such signals typically contain a few dominant coefficients and numerous negligible entries that approximate zero. 
Traditional norms limit sparse representation by assigning equal weights to entries regardless of magnitude, thus failing to distinguish between trivial and significant non-zero values \cite{eamaz2022,5238742}. 
The $L_{0,\epsilon}$ norm addresses this issue by counting only entries exceeding a predefined threshold  $\epsilon$ \cite{eamaz2022,rath2008}, yet it risks disregarding small but meaningful coefficients.
To overcome these limitations, we introduce the truncated Huber ($TH$) penalty, a variant of the classical Huber function \cite{huber1992robust}  originally developed for robust regression. 
Unlike the classical Huber penalty, whose constant derivative for large magnitudes introduces estimation bias, 
the truncated formulation mitigates this issue. 
The $TH$ function $\phi_{\mu}$ with parameter $\mu\in\rr_{+}$ is defined by
\begin{equation}\label{truncatedHF}
\phi_{\mu}(x):=\min\left(1, \tfrac{x^2}{\mu^2 }\right)=\left\{
\begin{aligned}
&\tfrac{1}{\mu^2 }x^2, & |x|\leq\mu ,\\
&1,&|x|> \mu ,
\end{aligned}
\right.
\end{equation}
where the parameter $\mu$ serves as a predefined threshold, effectively differentiating negligible non-zero entries from significant ones in the signal. 
This definition enables a quadratic penalty for entries with magnitudes below $\mu$, while imposing a constant penalty on larger magnitudes. 
Consequently, the function is sensitive to the relative importance of signal components without uniformly penalizing all non-zero elements, addressing a key limitation of conventional sparsity-inducing penalties.
In Figure \ref{diffregularfct}, we plot several popular scalar penalties together with the proposed $TH$ penalty for different $\mu$ values, including the $L_0$ penalty, the $L_1$ penalty, {the $CL_1$ penalty with $\mu=1,$} the $TL_1$ penalty with $\beta=1$, the $MCP$ penalty with $\mu=0.75$, and the classical Huber penalty with $\mu=0.5$. 
%\del{\textcolor{brown}{To give a better visual comparison, we scale some of them to attain the point $(1, 1)$.}}
%\del{{The explicit formulas of these penalties are provided in Table~\ref{table_func} for reference and comparison.}}
For small $\mu$, the $TH$ penalty approximates the $L_0$ norm while maintaining continuity, overcoming the reliance on pure non-zero element counts for sparsity quantification. 
Furthermore, unlike the $L_1$ norm, which can introduce bias in estimating large coefficients by penalizing all non-zero entries based on their amplitudes, the $TH$ penalty avoids this issue by imposing a constant penalty only on magnitudes exceeding $\mu$. 
The $TH$ penalty has demonstrated effectiveness in various applications, including image fusion \cite{jie2024multi} and image smoothing \cite{9490302}.

\begin{figure}
    \centering
    \includegraphics[width=1.0\linewidth]{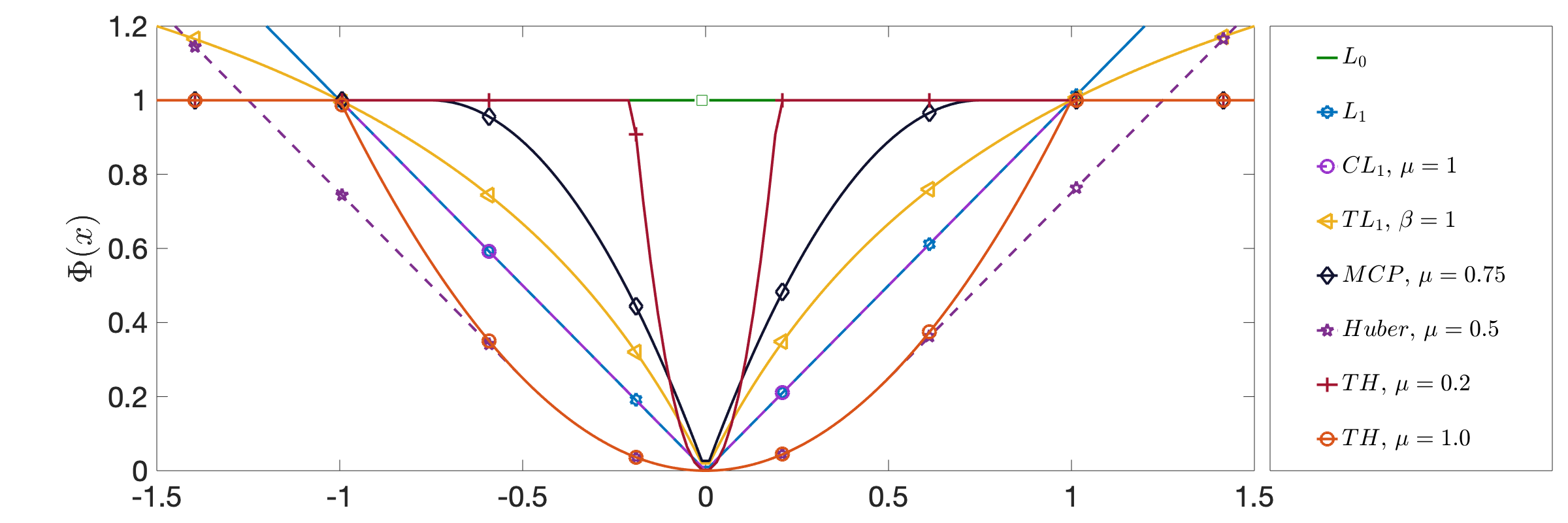}
    \caption{{Scalar penalties for $L_0(|x|^0)$, $L_1(|x|)$, $CL_1 (\min(|x|,\mu))$, $TL_1 (\tfrac{(\beta+1)|x|}{\beta + |x|})$, $MCP (|x| - \tfrac{x^2}{2\mu}$ if $ |x| \leq \mu$ otherwise $\tfrac{\mu}{2})$, $Huber(\tfrac{1}{2\mu}x^2$ if $|x|<\mu$ otherwise $|x|-\tfrac{\mu}{2})$, and $TH$.}}
    \label{diffregularfct}
\end{figure}

Using the $TH$ function \eqref{truncatedHF}, we propose the following model for noise-free signal recovery:
\begin{equation} \label{constrain_model}
\min_{\vx \in \cS} \Phi_{\mu}(\vx), 
\end{equation}  
where $\cS=\{\vx\in\rr^n\mid A\vx=\vb\}$, and $\Phi_{\mu}(\vx)=\sum_{i = 1}^n\phi_{\mu}(\vx_i)$.
In the presence of noise, we consider the following unconstrained optimization problem involving a cost function $\cG_{\mu}:\rr^n\rightarrow\rr$,
\begin{equation}\label{obj_singleu}
\min_{\vx} \mathcal{G}_{\mu}(\vx) := \tfrac{\alpha}{2} \norm{A\vx - \vb}_2^2 + \Phi_{\mu}(\vx),
\end{equation}
where $\alpha > 0$ is a regularization parameter that balances measurement fidelity and regularization.

The $TH$ penalty has several advantages. 
As $\mu$ approaches zero, the function converges to the $L_0$ norm while retaining continuity, facilitating analytical tractability.
Unlike the $L_1$ norm, it avoids bias in estimating large coefficients. 
{Moreover, we show that {any $(s,\mu)$-sparse} solution recoverable via conventional penalties remains a local optimum under the $TH$ formulation (see Theorem \ref{TH:s-sparse}).}
The $TH$ penalty is differentiable everywhere except at $|x|=\mu$,
but we prove that the optimal solution of both  \eqref{constrain_model} and \eqref{obj_singleu} never attains this value,  ensuring differentiability at the optimum, see Theorems \ref{THdifferentiable_refined} and \ref{Gfuncontinus}. 
From a computational perspective, we develop a block coordinate descent (BCD \cite{tseng2001convergence}) algorithm using a surrogate function.  We further establish theoretical guarantees for finite-step convergence under a spark condition, see Theorems \ref{covgergalg1} and \ref{convergalg2}.
Additionally, we propose a $\mu$-continuation strategy, which initializes  $\mu$ at a large value to ensure matrix invertibility and progressively reducing it to stabilize optimization and mitigate numerical instability.

The rest of this paper is organized as follows. 
In section \ref{introhuberfunct}, we derive optimality conditions for the truncated Huber-penalized models \eqref{constrain_model} and \eqref{obj_singleu}.
We further show that under the $TH$ framework, sparse feasible vectors are locally optimal solutions.
Section \ref{BCDmethod} presents the BCD algorithm for minimizing the proposed models. 
Section \ref{sec_experiments} provides numerical results validating the effectiveness of the proposed methods. 
Finally, conclusions and future works are given in section \ref{sec:concl}.

\section{Truncated Huber-penalized model for signal recovery}\label{introhuberfunct}
In this section, we first recall some definitions which will be useful for the rest of the work. 

{For a finite set \(\mathcal{S}\), the {cardinality} of \(\mathcal{S}\), denoted \(\text{card}(\mathcal{S})\), is the number of elements in \(\mathcal{S}\). 
}
The support of a vector $\vx\in\rr^n$ is the index set of its non-zero entries:
\[
\mathrm{supp}(\vx):=\{i:\vx_i\neq 0\}. 
\]
A vector $\vx$ is called $s$-sparse if it contains at most $s$ non-zero entries, i.e., $\norm{\vx}_0=\mathrm{card}(\mathrm{supp}(\vx))\leq s$ with $s<n$. 
{A vector $\vx$ is called $(s,\mu)$-sparse if it contains at most $s$ non-zero entries whose magnitudes are larger than $\mu$, i.e., $\norm{\vx}_0 \leq s< n$ and $ |\vx_i| > \mu$ holds for all $i\in\mathrm{supp}(\vx)$.}
The spark of a matrix $A$ is the smallest number of linearly dependent columns:
\[
\mathrm{spark}(A)=\min_{\vv\neq\mathbf{0}}\norm{\vv}_0,\quad \text{s.t.}\quad A\vv=\mathbf{0}.
\]
The $TH$ function, defined in \eqref{truncatedHF},
is continuous and satisfies
\[
\lim_{x\to\pm\mu }\phi_{\mu}(x)=\phi_{\mu}(\pm\mu), \quad \text{and}\quad
\lim_{\mu \to 0} \phi_\mu(x) = \begin{cases}  
0, & x = 0, \\  
1, & x \neq 0.  
\end{cases}  
\]

For any vector $\vx\in\rr^n$, the function $\Phi_{\mu}(\vx)$ is additively separable with respect to the components of $\vx$.
Furthermore, for any $\vx\in \rr^n$ and $\mu>0$,  it satisfies 
\[
0\leq\Phi_{\mu}(\vx)\leq n,\quad \text{and}\quad \lim_{\mu\rightarrow 0}\Phi_{\mu}(\vx)=\|\vx\|_0.
\]
Thus, $\Phi_{\mu}(\vx)$ is bounded and provides a continuous, unbiased approximation of the $L_0$ norm.
However, $\Phi_\mu(\vx)$ remains non-convex and non-differentiable at points where any component of $\vx$ equals $\pm\mu$.
\begin{lemma}\label{GradPhiTH}
The function $\Phi_{\mu}(\vx)$ is differentiable at any $\vx \in \mathbb{R}^n$ with $|\vx_i| \neq \mu$ for all  $i = 1, \ldots, n.$ 
Specifically, for such $\vx$,
the gradient of $\Phi_{\mu}(\vx)$ is given by 
\[
\nabla\Phi_{\mu}(\vx) = \frac{2}{\mu^2}(\mathbf{1} - \vomega) \circ \vx,
\]
where $\circ$ represents the Hadamard product, and $\vomega=\cH_{\mu}(\vx)\in\mathbb{R}^n$, with $\cH_{\mu}(\vx)$ a `filter' operator whose $i$-th entry is defined component-wise by
\begin{equation}\label{porpoptw}
   \big( \cH_{\mu}(\vx)\big)_i =\begin{cases}  
0, & \text{if } |\vx_i| \leq \mu,\\
1, & \text{if } |\vx_i| > \mu,
\end{cases}
\quad \forall i \in \{1, \ldots, n\}.
\end{equation}
\end{lemma}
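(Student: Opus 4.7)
The plan is to exploit the separable structure $\Phi_{\mu}(\vx)=\sum_{i=1}^{n}\phi_{\mu}(\vx_i)$ and reduce the problem to a one-variable calculation of the derivative of the scalar function $\phi_{\mu}$ at points $x_0$ with $|x_0|\neq\mu$. Because each summand depends on only one coordinate, differentiability of $\Phi_{\mu}$ at a point $\vx$ with $|\vx_i|\neq\mu$ for every $i$ follows immediately once the one-dimensional derivative $\phi_{\mu}'(\vx_i)$ is shown to exist for each $i$: the partial derivative $\partial_i\Phi_{\mu}(\vx)$ equals $\phi_{\mu}'(\vx_i)$, and the continuity of these partials on the open set $\{\vx:|\vx_i|\neq\mu,\ \forall i\}$ then upgrades componentwise differentiability to Fréchet differentiability by the standard $C^{1}$ criterion.

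First I would verify the scalar claim. By the piecewise definition \eqref{truncatedHF}, on the open interval $(-\mu,\mu)$ we have $\phi_{\mu}(x)=x^{2}/\mu^{2}$, which is smooth with $\phi_{\mu}'(x)=2x/\mu^{2}$. On the open set $\{x:|x|>\mu\}$, the function is identically $1$, so $\phi_{\mu}'(x)=0$ there. Both of these derivative expressions are continuous on their respective open pieces. The only points of concern are $x=\pm\mu$, which are excluded by hypothesis. Hence $\phi_{\mu}$ is continuously differentiable on $\mathbb{R}\setminus\{-\mu,\mu\}$.

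Next I would assemble the gradient. Given $\vx\in\mathbb{R}^{n}$ with $|\vx_i|\neq\mu$ for every $i$, the partial derivatives
\[
\partial_i\Phi_{\mu}(\vx)=\phi_{\mu}'(\vx_i)=\begin{cases}\tfrac{2}{\mu^{2}}\vx_i,&|\vx_i|<\mu,\\[2pt] 0,&|\vx_i|>\mu,\end{cases}
\]
can be written uniformly as $\partial_i\Phi_{\mu}(\vx)=\tfrac{2}{\mu^{2}}\big(1-(\cH_{\mu}(\vx))_i\big)\vx_i$, directly from the definition of the filter operator $\cH_{\mu}$ in \eqref{porpoptw}. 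Stacking coordinates and setting $\vomega=\cH_{\mu}(\vx)$ yields the stated expression $\nabla\Phi_{\mu}(\vx)=\tfrac{2}{\mu^{2}}(\mathbf{1}-\vomega)\circ\vx$. The continuity of each $\partial_i\Phi_{\mu}$ on the open set where no coordinate equals $\pm\mu$ then ensures genuine Fréchet differentiability, completing the argument.

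There is no real obstacle here; the only subtlety worth flagging is the implicit use of the $C^{1}$ criterion to pass from separately-computed partial derivatives to full differentiability, which is justified precisely because the hypothesis $|\vx_i|\neq\mu$ for all $i$ places the point in an open set on which each $\phi_{\mu}'(\vx_i)$ depends continuously on $\vx_i$. No claim is made (and none is needed) at the exceptional points $|\vx_i|=\mu$, where $\phi_{\mu}$ has a jump in its derivative and differentiability genuinely fails.
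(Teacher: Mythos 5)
Your proof is correct; the paper states Lemma~\ref{GradPhiTH} without proof, and your argument supplies exactly the routine verification the authors evidently had in mind: separability reduces everything to the scalar derivative of $\phi_{\mu}$ away from $\pm\mu$, and the partials assemble into the stated Hadamard-product formula. The appeal to the $C^{1}$ criterion is a clean way to get Fr\'echet differentiability, though in this fully separable setting you could also get it directly, since $\Phi_{\mu}(\vx+\vh)-\Phi_{\mu}(\vx)=\sum_i\bigl(\phi_{\mu}'(\vx_i)\vh_i+o(|\vh_i|)\bigr)$ already gives the first-order expansion.
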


\subsection{Proximal mapping of $\Phi_\mu$} 
We analyze the proximal mapping of the $TH$ penalty $\Phi_\mu$. 
Since $\Phi_\mu$ is a non-convex vector variable function separable across variables, we focus on an efficient way that reduces the proximal map of $\Phi_\mu$ to the proximal maps of its individual components $\phi_\mu$:
\begin{equation}
\operatorname{Prox}_{\lambda}^{\phi_{\mu}}(\va_i) = \arg\min_{\vx_i\in\rr}\left\{\phi_{\mu}(\vx_i)+\tfrac{1}{2\lambda}(\vx_i - \va_i)^2\right\},\quad i=1,\ldots,n,
\end{equation}
for every $\lambda>0$.
The following result characterizes the optimal solution to each subproblem.
\begin{theorem} \label{decompprosolut}
Let $\psi(x;a) = \phi_{\mu}(x)+\frac{1}{2\lambda}(x - a)^2$, where $\phi_{\mu}(x)$ is defined in \eqref{truncatedHF}. The optimal solution of $\min_{x} \psi(x;a)$ is:
\begin{equation}\label{solutionforproxi}
  x^\dagger = \begin{cases}
a, & \text{if } |a| > \sqrt{\mu^2 + 2\lambda}, \\
a \lor \frac{\mu^2}{\mu^2 + 2\lambda}a, & \text{if } |a| = \sqrt{\mu^2 + 2\lambda}, \\
\frac{\mu^2}{\mu^2 + 2\lambda}a, & \text{if } |a| < \sqrt{\mu^2 + 2\lambda}.
\end{cases}  
\end{equation}
{Here $\lor$ denotes the logical “or”.}
Moreover, $x^\dagger$ cannot attain the values $\pm\mu$.
\end{theorem}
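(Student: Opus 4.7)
The plan is to exploit the piecewise definition of $\phi_\mu$ and split the one-dimensional minimization into two convex subproblems. I would decompose $\mathbb{R}$ into the two closed regions $R_1=\{x:|x|\le\mu\}$ and $R_2=\{x:|x|\ge\mu\}$ (their overlap at $x=\pm\mu$ is harmless since $\phi_\mu$ is continuous there), solve $\min\psi(\,\cdot\,;a)$ on each region separately, and then compare the two candidate values.

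On $R_2$, the objective reduces to $1+(x-a)^2/(2\lambda)$, a convex quadratic; when $|a|\ge\mu$ its minimizer on $R_2$ is $x=a$ with value $1$, and otherwise the minimum lies on the boundary at $x=\operatorname{sign}(a)\,\mu$. On $R_1$, the objective is the strictly convex quadratic $x^2/\mu^2+(x-a)^2/(2\lambda)$ whose unconstrained stationary point is $\tilde x=\frac{\mu^2}{\mu^2+2\lambda}a$ with value $\frac{a^2}{\mu^2+2\lambda}$ (a short completion-of-squares). A direct inequality shows $|\tilde x|\le\mu$ iff $|a|\le\mu+2\lambda/\mu$, so $\tilde x$ is the constrained minimizer of the $R_1$ problem throughout the relevant range, while for $|a|>\mu+2\lambda/\mu$ the $R_1$ minimizer sits at the boundary $\pm\mu$.

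The key step is the comparison of the two candidate minima. The equation $\frac{a^2}{\mu^2+2\lambda}=1$ identifies the decisive threshold $|a|=\sqrt{\mu^2+2\lambda}$, and a brief calculation confirms the ordering $\mu<\sqrt{\mu^2+2\lambda}\le\mu+2\lambda/\mu$. Below the threshold $\tilde x$ wins, above it the point $x=a$ wins, and at the threshold both achieve the same value and are therefore both global minimizers; for $|a|>\mu+2\lambda/\mu$ the boundary candidate from $R_1$ is dominated since $1+(|a|-\mu)^2/(2\lambda)>1$. Together these cases yield exactly the three-branch formula \eqref{solutionforproxi}.

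For the last claim that $x^\dagger\ne\pm\mu$, I would simply verify each branch directly: $|x^\dagger|=|a|>\sqrt{\mu^2+2\lambda}>\mu$ in the first branch, $|x^\dagger|=\frac{\mu^2}{\mu^2+2\lambda}|a|<\frac{\mu^2}{\sqrt{\mu^2+2\lambda}}<\mu$ in the third branch, and at the threshold the two candidates $\pm\sqrt{\mu^2+2\lambda}$ and $\pm\frac{\mu^2}{\sqrt{\mu^2+2\lambda}}$ are both strictly separated from $\pm\mu$. I do not anticipate any fundamental obstacle; the only delicate point is the bookkeeping of the case analysis for $|a|$ against the three thresholds $\mu$, $\sqrt{\mu^2+2\lambda}$, and $\mu+2\lambda/\mu$, together with the algebraic verification of their ordering, after which the branches fall out cleanly.
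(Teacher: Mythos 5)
Your proposal is correct and follows essentially the same route as the paper: both identify the two interior candidates $a$ and $\tfrac{\mu^2}{\mu^2+2\lambda}a$ together with the breakpoints $\pm\mu$, and decide between them by comparing $1$ with $\tfrac{a^2}{\mu^2+2\lambda}$ at the threshold $|a|=\sqrt{\mu^2+2\lambda}$. If anything, your version is slightly more careful than the paper's, since you explicitly verify that each stationary point lies in its region (via the ordering $\mu<\sqrt{\mu^2+2\lambda}<\mu+2\lambda/\mu$) before using it as a candidate.
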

\begin{proof}
The derivative of $\phi_{\mu}(x)$ is $\phi_{\mu}'(x) = 0$ for $|x| > \mu$ and $\phi_{\mu}'(x)=\frac{2x}{\mu^2}$ for $|x| < \mu$. The first-order optimality condition  $\psi'(x^\dagger;a) = 0$ results in
\[
x^\dagger - a = 0, \quad \text{or} \quad \tfrac{2\lambda}{\mu^2}x^\dagger + x^\dagger - a = 0.
\]
Solving these equations yields $x^\dagger = a$ or $x^\dagger=\frac{\mu^2}{\mu^2 + 2\lambda}a$.

Next, we evaluate $\psi(x;a)$ at these potential solutions:
\[
\psi(a;a) = 1,  \quad \text{and} \quad \psi\left(\tfrac{\mu^2}{\mu^2 + 2\lambda}a;a\right)=\tfrac{a^2}{\mu^2 + 2\lambda}.
\]
Additionally, we consider the values at the non-differentiable points, i.e., $\psi(\pm\mu;a)=1+\tfrac{(\pm\mu-a)^2}{2\lambda}$. 
Subsequently, we analyze the value of $a$ in the following cases:
\begin{enumerate}[(1)]
\item If $|a| = \sqrt{\mu^2 + 2\lambda}$, then $\psi(a;a)=\psi\left(\frac{\mu^2}{\mu^2 + 2\lambda}a;a\right)$, so both choices are optimal.
\item If $|a| < \sqrt{\mu^2 + 2\lambda}$, then $\psi(\pm\mu;a)>\psi(a;a)>\psi\left(\frac{\mu^2}{\mu^2 + 2\lambda}a;a\right)$. In this case, $x^\dagger=\frac{\mu^2}{\mu^2 + 2\lambda}a$ minimizes $\psi(x;a)$, and $|x^\dagger| < \mu$.
\item If $|a| > \sqrt{\mu^2 + 2\lambda}$, then $\psi(a;a)<\psi\left(\frac{\mu^2}{\mu^2 + 2\lambda}a;a\right)$ and $\psi(a;a)<\psi(\pm\mu;a)$, making $x^\dagger = a$ the optimal solution, with $|x^\dagger| > \mu$.
\end{enumerate}
Therefore, condition \eqref{solutionforproxi} holds. The optimal solution $x^\dagger$ remains equal to $a$ when $|a|>\sqrt{\mu^2 + 2\lambda}$. For $|a|<\sqrt{\mu^2 + 2\lambda}$, $x^\dagger$ is scaled by $\tfrac{\mu^2}{(\mu^2 + 2\lambda)}$, ensuring that $|x^\dagger|=\tfrac{\mu^2}{(\mu^2 + 2\lambda)}|a|$ strictly less than $\mu$. 
These imply that $x^\dagger=\arg\min_{x}\psi(x;a)$ cannot attain the values $\pm\mu$. 
\end{proof}

{For closed-form proximal mappings of the \(L_1\), \(TL_1\), MCP and \(CL_1\) penalties, we refer readers to \cite{AN2022319, zhang2017minimization, pmlr-v28-gong13a}.}

\subsection{Signal recovery models}
We now analyze signal recovery models using the $TH$ penalty $\Phi_{\mu}(\vx)$ in both noise-free and noisy settings, formulated as \eqref{constrain_model} and \eqref{obj_singleu}, respectively.
We consider introducing the unified optimization formulation \eqref{constrain_modela}, with $\Phi(\vx)=\Phi_{\mu}(\vx)$.

Unlike the $L_1/L_2$ model, which requires the strong null space property (sNSP)  to ensure local optimality of sparse vectors \cite{rahimi2019scale}, our model guarantees this property without additional structural assumptions on $A$, as shown in the following result.

\begin{theorem} \label{TH:s-sparse} 
{
Let $\wvx$ be a $(s,\mu)$-sparse vector satisfying $A\wvx = \vb$ (or $\|A\wvx - \vb\|_2^2 \leq \epsilon^2$).} Then $\wvx$ is a local minimizer of  $\min_{\vx\in\cS} \Phi_{\mu}(\vx)$. Specifically, there exists $\rho > 0$ such that for any perturbation $\vv$ with $\|\vv\|_\infty < \rho$ (or $\|\vv\|_\infty < \min(\rho, \tfrac{\epsilon}{\sqrt{n}\|A\|_2})$ in the noisy case) and $(\wvx + \vv) \in \cS$,  
\[  
\Phi_{\mu}(\wvx + \vv) \geq \Phi_{\mu}(\wvx).  
\]  

\end{theorem}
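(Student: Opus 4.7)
The plan is to exploit the additive separability of $\Phi_\mu$ and argue componentwise, since each $\phi_\mu$ depends only on a single coordinate. First I would pin down the value $\Phi_\mu(\wvx)$ exactly. Letting $S := \mathrm{supp}(\wvx)$, the hypothesis $\mu < |\wvx_i|$ for every $i \in S$ places each non-zero component in the saturated regime where $\phi_\mu(\wvx_i) = 1$, while the zero components contribute $0$. Hence $\Phi_\mu(\wvx) = |S| \le s$.

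The natural choice of radius is $\rho := \min_{i \in S} |\wvx_i| - \mu > 0$, and I would then check the inequality coordinate by coordinate. For any perturbation $\vv$ with $\|\vv\|_\infty < \rho$, the reverse triangle inequality yields $|\wvx_i + \vv_i| \ge |\wvx_i| - |\vv_i| > \mu$ for every $i \in S$, so the saturated value $\phi_\mu((\wvx+\vv)_i) = 1$ is preserved. For $i \notin S$, the perturbed entry is simply $\vv_i$, and irrespective of whether $|\vv_i|$ falls in the quadratic piece or in the constant piece of \eqref{truncatedHF}, one has $\phi_\mu(\vv_i) \ge 0$. Summing over the two index sets gives
\begin{equation*}
\Phi_\mu(\wvx + \vv) \;=\; \sum_{i \in S} 1 \;+\; \sum_{i \notin S} \phi_\mu(\vv_i) \;\ge\; |S| \;=\; \Phi_\mu(\wvx),
\end{equation*}
which is precisely the desired inequality.

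For the noisy setting the penalty-side argument is unchanged: once $\wvx + \vv \in \cS$ is imposed as a hypothesis, the pointwise lower bound on $\phi_\mu$ does not care whether $\cS$ is an affine subspace or a residual ball. The supplementary bound $\|\vv\|_\infty < \epsilon/(\sqrt{n}\,\|A\|_2)$ is a feasibility-side condition; combined with $\|A\vv\|_2 \le \|A\|_2\|\vv\|_2 \le \sqrt{n}\,\|A\|_2\,\|\vv\|_\infty$, it ensures the residual slack $\|A(\wvx+\vv)-\vb\|_2$ is compatible with the constraint $\|A(\wvx+\vv)-\vb\|_2 \le \epsilon$, so that nontrivial admissible perturbations actually exist in a neighborhood of $\wvx$.

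I do not anticipate any genuine obstacle here. Truncation of the Huber function is precisely what makes large-magnitude entries insensitive to small perturbations, and the non-negativity of the quadratic piece handles the zero-entry perturbations automatically; no algebraic manipulation of $A$ or null-space conditions is required, which explains why the statement needs no structural assumption on $A$ beyond what is implicit in $\cS \neq \emptyset$. The only detail requiring mild care is taking $\rho$ strictly less than $\min_{i \in S} |\wvx_i| - \mu$ so that $|\wvx_i + \vv_i| > \mu$ holds strictly and $\phi_\mu$ genuinely attains the value $1$ at each perturbed support component, avoiding the non-differentiable point $|x|=\mu$ identified in Theorem~\ref{decompprosolut}.
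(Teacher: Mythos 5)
Your proof is correct and follows essentially the same route as the paper's: exploit separability, note that entries on the support stay in the saturated regime $\phi_\mu = 1$ under small perturbations, and bound the off-support contributions below by zero. Your choice of $\rho = \min_{i\in\mathrm{supp}(\wvx)}|\wvx_i| - \mu$ is marginally cleaner than the paper's (which also constrains $|\vv_i|<\mu$ off the support in order to write the quadratic form explicitly, a condition your nonnegativity argument makes unnecessary), but the substance is identical.
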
  
\begin{proof}
Define $\rho:=\tfrac{1}{2}\min_{i}\{\abs{\abs{\wvx_i}-\mu}\}$, ensuring  $0<\rho\leq \mu/2$. 
For any $\vv$ satisfying $0<\norm{\vv}_{\infty}< \rho$ and $A\vv=\mathbf0$ (or  $\vv\in  \{\norm{A(\wvx+\vv)-\vb}_2^2\leq \epsilon^2\}$, $0<\norm{\vv}_\infty<\min(\rho,\tfrac{\epsilon}{\sqrt{n}\norm{A}_2})$), the following properties hold:
\begin{enumerate}[(1)]
    \item If $i\in \mathrm{supp}(\wvx)$, then $\abs{\wvx_i}>\mu$ and
 $\abs{\wvx_i+\vv_i}>\mu $. Hence,  $\phi_{\mu}(\wvx_i+\vv_i)=\phi_{\mu}(\wvx_i)$.
    \item If $i\notin \mathrm{supp}(\wvx)$, then $\abs{\wvx_i}=0<\mu$ and $\abs{\vv_i}<\mu $. Thus, $\phi_{\mu}(\wvx_i+\vv_i)=\tfrac{\vv_i^2}{\mu^2}$.
\end{enumerate}
Summing over $i$,  we have
\[
\Phi_{\mu}(\wvx)=\sum_{i\in \mathrm{supp}(\wvx)}\phi_{\mu}(\wvx_i)+\sum_{i\notin \mathrm{supp}(\wvx)}\phi_{\mu}(\wvx_i)
=\sum_{i\in \mathrm{supp}(\wvx)}\phi_{\mu}(\wvx_i+\vv_i).
\]
Notice that $\Phi_{\mu}(\wvx+\vv)=\sum_{i\in \mathrm{supp}(\wvx)}\phi_{\mu}(\wvx_i+\vv_i)+\sum_{i\notin \mathrm{supp}(\wvx)}\phi_{\mu}(\wvx_i+\vv_i)$ with  the second term being nonnegative, the lemma holds. 
% \begin{align*}
% \Phi_{\mu}(\wvx)&=\sum_{i\in \mathrm{supp}(\wvx)}\phi_{\mu}(\wvx_i)+\sum_{i\notin \mathrm{supp}(\wvx)}\phi_{\mu}(\wvx_i)\\
% &=\sum_{i\in \mathrm{supp}(\wvx)}\phi_{\mu}(\wvx_i+\vv_i)\\
% &\leq\sum_{i\in \mathrm{supp}(\wvx)}\phi_{\mu}(\wvx_i+\vv_i)+\sum_{i\notin \mathrm{supp}(\wvx)}\phi_{\mu}(\wvx_i+\vv_i)\\
% &= \Phi_{\mu}(\wvx+\vv).
% \end{align*}
\end{proof}

The key aspect established by Theorem \ref{TH:s-sparse} ensures that the non-zero components of $\wvx$ are preserved under the optimization framework. On the other hand,  the parameter $\mu$ plays a strategic role as a threshold, which allows to distinguish between insignificant and significant non-zero entries.
An appropriate choice of $\mu$ is crucial for maintaining small yet meaningful coefficients while promoting sparsity. 
To characterize the relationship between $\mu$ and the optimal value, we present the following lemma, which quantifies this trade-off by showing that the optimal objective value $\Phi_{\mu}(\widehat{\vx}_{\mu})$ non-increasing as $\mu$ increases.   

\begin{lemma}\label{mudecre_objincre}  
Let $\widehat{\vx}_{\mu} = \arg\min_{\vx \in \cS} \Phi_{\mu}(\vx)$. For $\mu_1 > \mu_2 > 0$, 
\[  
\Phi_{\mu_1}(\widehat{\vx}_{\mu_1}) \leq \Phi_{\mu_2}(\widehat{\vx}_{\mu_2}).  
\]  
\end{lemma}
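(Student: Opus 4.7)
The plan is to prove the lemma by exploiting a simple monotonicity property of the scalar truncated Huber function $\phi_\mu$ in the parameter $\mu$, combined with a standard optimality chaining argument. The key observation is that both the branches of $\phi_\mu$ in \eqref{truncatedHF}, namely $x^2/\mu^2$ and $1$, are non-increasing in $\mu$, since increasing $\mu$ shrinks the quadratic branch and enlarges the region on which this shrunken quadratic applies before the cap at $1$ takes over. Equivalently, $\phi_\mu(x)=\min(1,x^2/\mu^2)$ is, for every fixed $x\in\mathbb{R}$, a non-increasing function of $\mu>0$.

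First I would formalize this pointwise monotonicity in a single line: for $\mu_1>\mu_2>0$ and any $x\in\mathbb{R}$,
\[
\phi_{\mu_1}(x)=\min\!\bigl(1,\tfrac{x^2}{\mu_1^2}\bigr)\le \min\!\bigl(1,\tfrac{x^2}{\mu_2^2}\bigr)=\phi_{\mu_2}(x),
\]
which follows from $\mu_1^2>\mu_2^2$. Summing this inequality componentwise over the entries of any vector $\vx\in\mathbb{R}^n$ yields the vector-level monotonicity $\Phi_{\mu_1}(\vx)\le \Phi_{\mu_2}(\vx)$.

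Next I would perform the standard optimality chaining. Since the feasible set $\cS$ does not depend on $\mu$, the minimizer $\widehat{\vx}_{\mu_2}$ is also feasible for the problem $\min_{\vx\in\cS}\Phi_{\mu_1}(\vx)$. Hence by optimality of $\widehat{\vx}_{\mu_1}$,
\[
\Phi_{\mu_1}(\widehat{\vx}_{\mu_1})\le \Phi_{\mu_1}(\widehat{\vx}_{\mu_2}),
\]
and applying the pointwise monotonicity established above at $\vx=\widehat{\vx}_{\mu_2}$ gives
\[
\Phi_{\mu_1}(\widehat{\vx}_{\mu_2})\le \Phi_{\mu_2}(\widehat{\vx}_{\mu_2}).
\]
Chaining these two inequalities yields the claim.

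There is essentially no significant obstacle here: the entire argument rests on the elementary fact that $\mu\mapsto \min(1,x^2/\mu^2)$ is non-increasing, and the minimizers are over a $\mu$-independent feasible set $\cS$, so no perturbation or continuity argument in $\mu$ is required. The only thing worth stating explicitly is that existence of the argmin is assumed in the lemma's hypothesis (so one need not worry about selecting a minimizer), and that the inequality is preserved under the noisy variant of $\cS$ as well because the argument never uses the structure of $\cS$ beyond its independence from $\mu$.
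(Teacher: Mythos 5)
Your proposal is correct and follows essentially the same route as the paper: establish the pointwise monotonicity $\phi_{\mu_1}(x)\le\phi_{\mu_2}(x)$ for $\mu_1>\mu_2$, sum to get $\Phi_{\mu_1}\le\Phi_{\mu_2}$, and then chain $\Phi_{\mu_1}(\widehat{\vx}_{\mu_1})\le\Phi_{\mu_1}(\widehat{\vx}_{\mu_2})\le\Phi_{\mu_2}(\widehat{\vx}_{\mu_2})$ using optimality over the $\mu$-independent feasible set. No gaps.
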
  

\begin{proof}  
For $\mu_1 > \mu_2$, we have $\min(1, x^2/\mu_2^2) \geq \min(1, x^2/\mu_1^2)$, implying $\Phi_{\mu_1}(\vx) \leq \Phi_{\mu_2}(\vx)$ for all $\vx \in \mathbb{R}^n$. Consequently, 
\[ 
\min_{\vx\in\cS} \Phi_{\mu_1}(\vx)=\Phi_{\mu_1}(\wvx_{\mu_1})\leq \Phi_{\mu_1}(\wvx_{\mu_2}) \leq \Phi_{\mu_2}(\wvx_{\mu_2})=\min_{\vx\in\cS} \Phi_{\mu_2}(\vx). 
\]
Hence the lemma holds.
\end{proof}  

\subsection{Differentiability at optimal solutions}  

The $TH$ penalty $\Phi_{\mu}(\vx)$ is non-differentiable at points where any component equals $\pm\mu$, which poses challenges for numerical optimization.  
To overcome this issue, we demonstrate that the minimizes of both the noise-free case (see \eqref{constrain_model}) and the noisy case (see \eqref{obj_singleu}) cannot attain these non-differentiable points. 
Consequently, $\Phi_{\mu}(\vx)$ remains differentiable in a neighborhood around minimizer, thereby facilitating efficient optimization.  

\begin{theorem}\label{THdifferentiable_refined}  
Let $A \in \mathbb{R}^{m \times n}$ with $m < n$. For the noise-free case \eqref{constrain_model}, we assume that any $(n-1)$-column submatrix of $A$ has full row rank.
Then, the optimal solution $\wvx\in\rr^n $ of \eqref{constrain_model} satisfies  
\[
|\widehat\vx_i| \neq \mu, \quad \forall i = 1, \ldots, n,  
\]  
and $\Phi_{\mu}(\vx)$ is differentiable in a neighborhood of $\wvx$.  
\end{theorem}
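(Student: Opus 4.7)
The plan is to prove $|\widehat\vx_i|\neq\mu$ by contradiction and then deduce the differentiability claim directly from Lemma \ref{GradPhiTH} together with continuity. So assume $\widehat\vx$ minimizes \eqref{constrain_model} yet the kink set $S_{=}:=\{j:|\widehat\vx_j|=\mu\}$ is non-empty. Since $A(\widehat\vx+t\vv)=\vb$ for every $\vv\in\ker A$ and every $t\in\mathbb{R}$, I would probe optimality by studying the one-variable function $g_{\vv}(t):=\Phi_{\mu}(\widehat\vx+t\vv)$ near $t=0$, at which $g_{\vv}$ must attain a local minimum.

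The next step is to split the coordinates into $S_{<}\cup S_{=}\cup S_{>}$ according to whether $|\widehat\vx_j|$ is below, at, or above the threshold $\mu$, and to compute the one-sided derivatives of $g_{\vv}$ term by term. Indices in $S_{>}$ sit on the flat plateau under small perturbations and contribute nothing, while indices in $S_{<}$ are smooth and contribute the two-sided term $(2\widehat\vx_j/\mu^2)\vv_j$. The delicate piece is $S_{=}$, where $\phi_{\mu}$ has a one-sided kink: increasing $|\widehat\vx_j|$ away from $\pm\mu$ keeps $\phi_{\mu}=1$ with zero slope, whereas decreasing $|\widehat\vx_j|$ enters the quadratic branch with slope of magnitude $2/\mu$. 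Writing $\sigma_j:=\mathrm{sign}(\widehat\vx_j)$, a short case analysis shows that, for $j\in S_{=}$, the contribution to $g_{\vv}'(0^{+})$ is $-2|\vv_j|/\mu$ exactly when $\sigma_j\vv_j<0$ and zero otherwise, while the contribution to $g_{\vv}'(0^{-})$ is $+2|\vv_j|/\mu$ exactly when $\sigma_j\vv_j>0$ and zero otherwise.

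The local-optimality conditions $g_{\vv}'(0^{+})\geq 0$ and $g_{\vv}'(0^{-})\leq 0$ then give two inequalities whose sum cancels the common smooth $S_{<}$ term and leaves $\sum_{j\in S_{=}}|\vv_j|\leq 0$. Hence $\vv_j=0$ for every $j\in S_{=}$ and every $\vv\in\ker A$. To close the argument, let $A_{-j}$ denote the $m\times(n-1)$ submatrix obtained by deleting column $j$ of $A$; the hypothesis that every such submatrix has full row rank gives $\dim\ker A_{-j}=n-1-m$, strictly less than $\dim\ker A=n-m$, so for each $j$ there exists $\vv\in\ker A$ with $\vv_j\neq 0$. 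Picking any $j\in S_{=}$ contradicts the previous conclusion, forcing $S_{=}=\emptyset$. Once $|\widehat\vx_j|\neq\mu$ for every $j$, Lemma \ref{GradPhiTH} gives differentiability of $\Phi_{\mu}$ at $\widehat\vx$, and by continuity the open condition $|\vx_j|\neq\mu$ persists on a neighborhood. The principal obstacle I anticipate is the sign bookkeeping at the kinks: the asymmetry between the one-sided cliff derivatives is precisely what makes the two optimality inequalities add constructively, and without this pairing the final linear-algebraic contradiction would fail to close.
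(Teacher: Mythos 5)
Your proof is correct, but it closes the contradiction by a genuinely different mechanism than the paper. The paper also perturbs along a kernel direction $\vv$, but it works at second order: it partitions the kink set $\cI_0$ by the sign of $\wvx_i\vv_i$, expands $\Phi_\mu(\wvx\pm\epsilon\vv)$ explicitly, does a case split on the sign of $\sum_{i\in\cI_{01}}\wvx_i\vv_i+\sum_{i\in\cI_2}\wvx_i\vv_i$ to decide whether $+\epsilon\vv$ or $-\epsilon\vv$ is a descent direction, and chooses a concrete $\epsilon$ making the objective strictly decrease; it also needs a single kernel vector $\vv$ that is simultaneously nonzero on \emph{all} of $\cI_0$ (asserted, not constructed). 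You instead run a purely first-order argument: the two one-sided optimality inequalities $g_{\vv}'(0^+)\geq 0$ and $g_{\vv}'(0^-)\leq 0$, whose difference cancels the smooth $S_<$ contribution and isolates the strictly negative kink contribution $-\tfrac{2}{\mu}\sum_{j\in S_=}|\vv_j|$, forcing $\vv_j=0$ on $S_=$ for every $\vv\in\ker A$; the rank hypothesis then only has to produce, for a \emph{single} index $j$, some kernel vector with $\vv_j\neq 0$, which follows cleanly from $\dim\ker A_{-j}=n-1-m<n-m=\dim\ker A$. Your sign bookkeeping at the kinks is right (the inward side of $\pm\mu$ has slope $2/\mu$ in magnitude, the outward side is flat), so the cancellation you rely on does occur. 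What your route buys is brevity and the elimination of both the $\epsilon$-selection and the simultaneous-nonvanishing construction; what the paper's route buys is an explicit descent direction with a quantified decrease, which is slightly more constructive.
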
  

\begin{proof}
Assume, by contradiction, that $|\wvx_i| = \mu$ for some $i$. Define index sets $\cI_0 = \{i \mid |\wvx_i| = \mu\}$, $\cI_1 = \{i \mid |\wvx_i| > \mu\}$, and $\cI_2 = \{i \mid |\wvx_i| < \mu\}$. Since $\cI_0 \neq \emptyset$, construct $\vv \in \mathbb{R}^n$ such that $A\vv = \mathbf{0}$, $\|\vv\|_2 = 1$, and $\vv_i \neq 0$ for $i \in \cI_0$.  

Define $\rho := \tfrac{1}{2} \min_{i \in \cI_1 \cup \cI_2} \{||\wvx_i| - \mu|\}$. For any $0 < \epsilon < \rho$, the perturbation $\wvx \pm \epsilon \vv$ remains feasible. Partition $\cI_0$ into $\cI_{01} = \{i \mid \wvx_i \vv_i \geq 0\}$ and $\cI_{02} = \{i \mid \wvx_i \vv_i < 0\}$.
This implies that for $i \in \cI_{01}$, $\wvx_i$ and $\vv_i$ have the same sign, while for $i \in \cI_{02}$, $\wvx_i$ and $\vv_i$ have opposite signs. 
Hence,
\begin{equation}\label{muidxsumterm}
\sum_{i\in\cI_{01}}\wvx_i\vv_i-\sum_{i\in\cI_{02}}\wvx_i\vv_i=\mu\sum_{i\in\cI_{0}}|\vv_i|>0.
\end{equation}

We now analyze the behavior of $\phi_{\mu}(\wvx_i\pm\epsilon\vv_i)$ for sets $\cI_{01}$, $\cI_{02}$, $\cI_1$, and $\cI_2$:
\begin{enumerate}[1),leftmargin=*]
    \item  For $i\in \cI_1$, we have $\abs{\wvx_i\pm \epsilon\vv_i}>\mu $, and 
    $
    \phi_{\mu}(\wvx_i\pm\epsilon\vv_i)=\phi_{\mu}(\wvx_i). 
    $
     \item  For $i\in \cI_2$, we obtain $\abs{\wvx_i\pm\epsilon\vv_i}<\mu $, and
     $
    \phi_{\mu}(\wvx_i\pm \epsilon\vv_i)=\tfrac{1}{\mu^2}(\wvx_i \pm \epsilon\vv_i)^2=\phi_{\mu}(\wvx_i) + \tfrac{1}{\mu^2}\left(\epsilon^2\vv_i^2\pm 2\epsilon\wvx_i \vv_i\right). 
    $
    \item  For $i\in \cI_{01}$, we can infer that $\abs{\wvx_i+\epsilon\vv_i}>\mu, \abs{\wvx_i-\epsilon\vv_i}<\mu $. Then
    $
    \phi_{\mu}(\wvx_i+\epsilon\vv_i)=\phi_{\mu}(\wvx_i)$  and $\phi_{\mu}(\wvx_i-\epsilon\vv_i)=\phi_{\mu}(\wvx_i) + \tfrac{1}{\mu^2}\left(\epsilon^2\vv_i^2- 2\epsilon\wvx_i \vv_i\right).
    $
    \item  For $i\in \cI_{02}$, we have $\abs{\wvx_i+\epsilon\vv_i}<\mu, \abs{\wvx_i-\epsilon\vv_i}>\mu $. 
    Then 
    $    \phi_{\mu}(\wvx_i-\epsilon\vv_i)=\phi_{\mu}(\wvx_i)$  and  $\phi_{\mu}(\wvx_i+\epsilon\vv_i)=\phi_{\mu}(\wvx_i) + \tfrac{1}{\mu^2}\left(\epsilon^2\vv_i^2+2 \epsilon\wvx_i \vv_i\right).
    $
\end{enumerate}

Next, we analyze the sign of $\sum_{i \in \cI_{01}} \wvx_i \vv_i + \sum_{i \in \cI_2} \wvx_i \vv_i$ to find a descent direction near $\wvx$ for $\Phi_{\mu}(\wvx)$. 
If $\sum_{i \in \cI_{01}} \wvx_i \vv_i + \sum_{i \in \cI_2} \wvx_i \vv_i > 0$,  expanding the expression  $\Phi_{\mu}(\wvx - \epsilon\vv) = \sum_{i}\phi_{\mu}(\wvx_i - \epsilon\vv_i)$  gives
\[
    \Phi_{\mu}(\wvx - \epsilon \vv) = \Phi_{\mu}(\wvx) + \tfrac{1}{\mu^2}\big(\epsilon^2 \sum_{i \in \cI_{01} \cup \cI_2} \vv_i^2 - 2\epsilon (\sum_{i \in \cI_{01}} \wvx_i \vv_i + \sum_{i \in \cI_2} \wvx_i \vv_i)\big).
\]
Choosing $\epsilon < \min \big\{2 \tfrac{\sum_{i \in \cI_{01}} \wvx_i \vv_i + \sum_{i \in \cI_2} \wvx_i \vv_i}{\sum_{i \in \cI_{01} \cup \cI_2} \vv_i^2}, \rho\big\}$, we obtain $\Phi_{\mu}(\wvx - \epsilon \vv) < \Phi_{\mu}(\wvx)$.

If $\sum_{i\in\cI_{01}}\wvx_i\vv_i+\sum_{i\in\cI_2}\wvx_i\vv_i \leq 0$,
then $\sum_{i\in\cI_2}\wvx_i\vv_i \leq -\sum_{i\in\cI_{01}}\wvx_i\vv_i$.
Expanding $\Phi_{\mu}(\wvx + \epsilon\vv) = \sum_{i}\phi_{\mu}(\wvx_i + \epsilon\vv_i)$ yields
\begin{align*}
\Phi_{\mu}(\wvx + \epsilon\vv) &= \Phi_{\mu}(\wvx) + \tfrac{1}{\mu^2}\big(\epsilon^2\sum_{i\in\cI_{02}\cup\cI_2}\vv_i^2 + 2\epsilon\sum_{i\in\cI_{02}}\wvx_i\vv_i + 2\epsilon\sum_{i\in\cI_2}\wvx_i\vv_i\big) \\
&\leq \Phi_{\mu}(\wvx) + \tfrac{1}{\mu^2}\big(\epsilon^2\sum_{i\in\cI_{02}\cup\cI_2}\vv_i^2 - 2\epsilon(\sum_{i\in\cI_{01}}\wvx_i\vv_i - \sum_{i\in\cI_{02}}\wvx_i\vv_i)\big). 
\end{align*}
Combining this with \eqref{muidxsumterm}, when we choose $\epsilon < \min\big\{\tfrac{2\mu\sum_{i\in\cI_{0}}|\vv_i|}{\sum_{i\in\cI_{02}\cup\cI_2}\vv_i^2},\rho\big\}$, we obtain
$\Phi_{\mu}(\wvx + \epsilon\vv) < \Phi_{\mu}(\wvx)$.

Therefore, either $\wvx + \epsilon\vv$ or $\wvx - \epsilon\vv$ decreases $\Phi_{\mu}(\wvx)$, 
which contradicts  the optimality of $\wvx$. Thus, $|\wvx_i| \neq \mu$ for all $i$.  
\end{proof}

Drawing inspiration from the work in \cite{nikolova2000thresholding}, we prove that, without any assumption on $A$, the objective function $\mathcal{G}_{\mu}(\vx)$ of model \eqref{obj_singleu} is differentiable in a neighborhood of its minimizer.

\begin{theorem}\label{Gfuncontinus}
Let $A \in \mathbb{R}^{m \times n}$ with $m < n$, and consider the function $\cG_{\mu}(\vx) = \tfrac{\alpha}{2} \norm{A\vx - \vb}_2^2 + \Phi_{\mu}(\vx)$ defined in \eqref{obj_singleu}. Then,
$\cG_{\mu}(\vx)$ is differentiable in a neighborhood of the optimal solution $\widehat{\vx} \in \mathbb{R}^n$ of \eqref{obj_singleu}. 
Furthermore,
\[
|\widehat{\vx}_i|  \neq \mu \quad \text{for all } i = 1, \ldots, n.
\]
\end{theorem}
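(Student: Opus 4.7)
The plan is to argue by contradiction, paralleling the strategy of Theorem~\ref{THdifferentiable_refined} but exploiting the fact that \eqref{obj_singleu} is \emph{unconstrained}. Unlike the noise-free case, I do not have to restrict perturbations to the null space of $A$, so instead of constructing an intricate feasible direction and partitioning indices into $\cI_{01},\cI_{02}$, I can work with a single coordinate-aligned perturbation $\pm\epsilon\ve_i$. This makes the fidelity variation entirely explicit and first-order.

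Concretely, suppose for contradiction that $|\wvx_i|=\mu$ for some index $i$. By sign symmetry I may assume $\wvx_i=\mu$. Let $\ve_i$ be the $i$-th standard basis vector and set
\[
g_i := \alpha\bigl[A^{\top}(A\wvx-\vb)\bigr]_i,
\]
the $i$-th component of the gradient of the smooth fidelity term. Expanding the fidelity part gives $\tfrac{\alpha}{2}\|A(\wvx\pm\epsilon\ve_i)-\vb\|_2^2 - \tfrac{\alpha}{2}\|A\wvx-\vb\|_2^2 = \pm\epsilon g_i + O(\epsilon^2)$. For the penalty, only the $i$-th component moves across the non-smooth point $\mu$ while all other components leave their $\phi_\mu$ values unchanged. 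The key observation is the asymmetric one-sided behaviour of $\phi_\mu$ at $\mu$: for small $\epsilon>0$, $\phi_\mu(\mu+\epsilon)=1=\phi_\mu(\mu)$ (the outward move stays in the flat region), whereas $\phi_\mu(\mu-\epsilon)=(\mu-\epsilon)^2/\mu^2 = 1-2\epsilon/\mu+\epsilon^2/\mu^2$ (the inward move enters the quadratic region). Collecting these,
\begin{align*}
\cG_\mu(\wvx+\epsilon\ve_i)-\cG_\mu(\wvx) &= \epsilon g_i + O(\epsilon^2), \\
\cG_\mu(\wvx-\epsilon\ve_i)-\cG_\mu(\wvx) &= -\epsilon\bigl(g_i+\tfrac{2}{\mu}\bigr) + O(\epsilon^2).
\end{align*}

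If $\wvx$ is a local minimizer of $\cG_\mu$, both right-hand sides must be nonnegative for all sufficiently small $\epsilon>0$, which forces $g_i\ge 0$ and $g_i \le -2/\mu$ simultaneously; since $\mu>0$ this is impossible. The case $\wvx_i=-\mu$ is handled identically by swapping the roles of $+\ve_i$ and $-\ve_i$. Hence $|\wvx_i|\neq \mu$ for every $i$, and since there are only finitely many components, a continuity argument produces a neighborhood of $\wvx$ on which $|\vx_i|\neq\mu$ for all $i$; Lemma~\ref{GradPhiTH} then yields differentiability of $\Phi_\mu$, and therefore of $\cG_\mu$, on that neighborhood.

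The only real obstacle is conceptual rather than technical: one has to notice that the flat-then-quadratic shape of $\phi_\mu$ at its kink creates a guaranteed escape direction. Whichever sign $g_i$ has, either the zero-penalty-change outward move decreases $\cG_\mu$ (if $g_i<0$) or the strictly-negative penalty contribution $-2\epsilon/\mu$ from the inward move overwhelms the fidelity increase (if $g_i\ge 0$). This is why, unlike Theorem~\ref{THdifferentiable_refined}, no structural hypothesis on $A$ is needed, matching the statement of the theorem.
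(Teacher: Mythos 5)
Your proposal is correct and follows essentially the same route as the paper's proof: a coordinate perturbation $\pm\epsilon\ve_i$, the observation that $\phi_{\mu}$ is flat on the outward side of the kink and quadratic on the inward side, and a first-order comparison that rules out $|\wvx_i|=\mu$ at any local minimizer. Your "both one-sided directional derivatives must be nonnegative, forcing $g_i\ge 0$ and $g_i\le -2/\mu$" framing is just a slightly tidier packaging of the paper's explicit two-case choice of descent direction.
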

\begin{proof}
Suppose, by contradiction, that $\widehat{\vx}_i = \mu$ (or $-\mu$) for some $i$.
Let $\ve_i\in\rr^n$ be the $i$-th unit vector. 
We show that moving $\wvx$ along $\pm\ve_i$, decreases $\cG_{\mu}(\vx)$, contradicting the optimality of $\wvx$. 

For $0 < \epsilon < 2\mu$, we have $-\mu < \widehat{\vx}_i - \epsilon < \widehat{\vx}_i = \mu < \widehat{\vx}_i + \epsilon$ . Thus, 
\[
\phi_{\mu}(\widehat{\vx}_i + \epsilon) = \phi_{\mu}(\widehat{\vx}_i) = 1, \quad \phi_{\mu}(\widehat{\vx}_i - \epsilon) = \tfrac{1}{\mu^2}(\widehat{\vx}_i - \epsilon)^2.
\]
Consequently, the $TH$ penalty satisfies
\[
\Phi_{\mu}(\widehat{\vx} + \epsilon\ve_i) = \Phi_{\mu}(\widehat{\vx}), \quad \Phi_{\mu}(\widehat{\vx} - \epsilon\ve_i) = \Phi_{\mu}(\widehat{\vx}) + \tfrac{1}{\mu^2}(\epsilon^2 - 2\widehat{\vx}_i\epsilon).
\]
Denoting $\va_i = A\ve_i$, the objective function $\mathcal{G}_{\mu}$ changes along the direction $\pm\ve_i$ is:
\[
\cG_{\mu}(\widehat{\vx} + \epsilon\ve_i) = \cG_{\mu}(\widehat{\vx}) + \epsilon\alpha\va_i^\top(A\widehat{\vx} - \vb) + \tfrac{1}{2}\alpha\norm{\va_i}_2^2\epsilon^2,
\]
and
\[
\cG_{\mu}(\widehat{\vx} - \epsilon\ve_i) = \cG_{\mu}(\widehat{\vx}) - \epsilon\left(\alpha\va_i^\top(A\widehat{\vx} - \vb) + \tfrac{2\widehat{\vx}_i}{\mu^2}\right) + \left(\tfrac{\alpha\norm{\va_i}_2^2}{2} + \tfrac{1}{\mu^2}\right)\epsilon^2.
\]

We analyze two cases based on the sign of $\va_i^\top(A\widehat{\vx} - \vb)$:
\begin{enumerate}[1)]
\item If $\va_i^\top(A\widehat{\vx} - \vb) \geq 0$, choosing $\epsilon < \min\left(\tfrac{2\left(\alpha\mu^2\va_i^\top(A\widehat{\vx} - \vb) + 2\widehat{\vx}_i\right)}{\alpha\mu^2\norm{\va_i}_2^2 + 2}, 2\mu\right)$ ensures that
\[
\cG_{\mu}(\widehat{\vx} - \epsilon\ve_i) < \cG_{\mu}(\widehat{\vx}).
\]

\item If $\va_i^\top(A\widehat{\vx} - \vb) < 0$, selecting $\epsilon < \min\left(-\tfrac{2}{\norm{\va_i}_2^2}\va_i^\top(A\widehat{\vx} - \vb), 2\mu\right)$ guarantees
\[
\cG_{\mu}(\widehat{\vx} + \epsilon\ve_i) < \cG_{\mu}(\widehat{\vx}).
\]
\end{enumerate}

These inequalities show that if $\wvx_i = \mu$ (similarly for $- \mu$), then $\wvx$ cannot be a minimizer of $\mathcal{G}_{\mu}(\cdot)$, as there exists a descent direction along $\pm \ve_i$. This contradiction implies that $|\widehat{\vx}_i| \neq \mu$ for all $i$ and $\cG_{\mu}(\vx)$ is differentiable around $\wvx$.
\end{proof}

\subsection{Optimality conditions}
Despite the non-differentiability of $\Phi_{\mu}(\vx)$ at the breakpoints $ \pm\mu$, Theorems \ref{THdifferentiable_refined} and \ref{Gfuncontinus} ensure that any minimizer satisfies $\abs{\wvx_i} \neq \mu$ for all $i$. 
Based on these results, we now derive the optimality conditions for both the noise-free and noisy models, formulated in Theorems \ref{linearoptcond} and \ref{noise_optcond}, respectively.

% \begin{theorem}\label{linearoptcond}
% Assume that any $(n-1)$-column submatrix of $A\in\rr^{m\times n}$, $m<n$, has rank $m$.
% A vector $\widehat{\vx}$ minimizes \eqref{constrain_model} locally or globally if and only if $\abs{\wvx_i}\neq \mu$ for all $i$ and there exists $\wvy\in\rr^m$ such that 
% \[
% \tfrac{2}{\mu^2}(\mathbf1-\widehat{\vomega})\circ \wvx + A^\top \wvy = \mathbf0, \quad \mathrm{and}\quad A\wvx=\vb,
% \]
% where $\widehat\vomega=\cH_{\mu}(\widehat\vx)$ as defined in \eqref{porpoptw}. 
% \end{theorem}

\begin{theorem}\label{linearoptcond}
Assume that any $(n-1)$-column submatrix of $A \in \mathbb{R}^{m \times n}$, $m < n$, has rank $m$. A vector $\widehat{\vx}$ is a local minimizer of \eqref{constrain_model},
if and only if $|\widehat{\vx}_i| \neq \mu$ for all $i$ and there exists $\widehat{\mathbf{y}} \in \mathbb{R}^m$ such that
\[
\tfrac{2}{\mu^2} (\mathbf{1} - \widehat{\mathbf{\vomega}}) \circ \widehat{\vx} + A^\top \widehat{\mathbf{y}} = \mathbf{0}, \quad \text{and} \quad A \widehat{\vx} = \vb,
\]
where $\widehat{\mathbf{\vomega}} = \mathcal{H}_{\mu}(\widehat{\vx})$ as defined in \eqref{porpoptw}. 
\end{theorem}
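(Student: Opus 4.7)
The plan is to prove both implications directly, using the differentiability result established in Theorem~\ref{THdifferentiable_refined} together with the explicit gradient formula from Lemma~\ref{GradPhiTH}. The key observation is that, near any candidate minimizer with $|\wvx_i| \neq \mu$, the penalty $\Phi_\mu$ is locally quadratic in a predictable way, so the first-order condition turns out to be both necessary and sufficient despite the overall non-convexity.

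For the necessity direction, I assume $\wvx$ is a local minimizer of \eqref{constrain_model}. Theorem~\ref{THdifferentiable_refined}, whose hypothesis matches the rank assumption here, immediately yields $|\wvx_i| \neq \mu$ for all $i$ and the differentiability of $\Phi_\mu$ on a neighborhood of $\wvx$, with gradient $\nabla \Phi_\mu(\wvx) = \tfrac{2}{\mu^2}(\mathbf{1} - \widehat{\vomega}) \circ \wvx$ by Lemma~\ref{GradPhiTH}. The $(n-1)$-column submatrix hypothesis forces $A$ itself to have full row rank $m$, so the linear independence constraint qualification holds for the affine system $A\vx = \vb$, and the classical Lagrange multiplier rule supplies $\wvy \in \rr^m$ with $\nabla \Phi_\mu(\wvx) + A^\top \wvy = \mathbf{0}$, which is exactly the stated identity.

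For the sufficiency direction, suppose $|\wvx_i| \neq \mu$ for every $i$, $A\wvx = \vb$, and the Lagrange identity holds. I partition the indices into $\cI_1 = \{i : |\wvx_i| > \mu\}$ and $\cI_2 = \{i : |\wvx_i| < \mu\}$, so that $\widehat{\vomega}_i = 1$ on $\cI_1$ and $\widehat{\vomega}_i = 0$ on $\cI_2$. Choose $\delta > 0$ small enough that $\|\vv\|_\infty < \delta$ preserves the inequalities $|\wvx_i + \vv_i| > \mu$ on $\cI_1$ and $|\wvx_i + \vv_i| < \mu$ on $\cI_2$. For any such $\vv$ additionally satisfying $A\vv = \mathbf{0}$, the piecewise definition of $\phi_\mu$ gives
\[
\Phi_\mu(\wvx + \vv) - \Phi_\mu(\wvx) = \tfrac{1}{\mu^2} \sum_{i \in \cI_2} \bigl(2 \wvx_i \vv_i + \vv_i^2\bigr) = \tfrac{2}{\mu^2} \bigl((\mathbf{1} - \widehat{\vomega}) \circ \wvx\bigr)^\top \vv + \tfrac{1}{\mu^2} \sum_{i \in \cI_2} \vv_i^2,
\]
and the linear term equals $-\wvy^\top A \vv = 0$ by the Lagrange identity and feasibility of $\vv$. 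The remaining quadratic term is non-negative, confirming local optimality.

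The only delicate step is the invocation of the Lagrange multiplier rule in the necessity direction: it needs both the differentiability of $\Phi_\mu$ at $\wvx$ (delivered by Theorem~\ref{THdifferentiable_refined}) and the full row rank of $A$ (forced by the $(n-1)$-submatrix hypothesis). Once those ingredients are in place, the non-convexity of $\Phi_\mu$ is no longer an obstacle, because on a small neighborhood of $\wvx$ the penalty splits into a constant piece on $\cI_1$ and an explicit non-negative quadratic on $\cI_2$, which makes the first-order stationarity equation characterize local minima exactly.
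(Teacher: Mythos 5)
Your proposal is correct and follows essentially the same route as the paper: necessity via Theorem~\ref{THdifferentiable_refined}, Lemma~\ref{GradPhiTH}, and the Lagrange multiplier rule, and sufficiency by showing that for any small kernel perturbation $\vv$ the change in $\Phi_\mu$ reduces to a linear term annihilated by the stationarity identity plus a non-negative quadratic over the indices with $|\wvx_i|<\mu$. Your explicit remark that the $(n-1)$-column hypothesis guarantees full row rank of $A$ (hence the constraint qualification) is a small clarification the paper leaves implicit, but the argument is otherwise identical.
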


\begin{proof}
Let $\widehat{\vx}$ be a local minimizer of \eqref{constrain_model}. By Theorem \ref{THdifferentiable_refined}, $|\widehat{\vx}_i| \neq \mu$ for all $i$. 
From Lemma \ref{GradPhiTH}, $\Phi_{\mu}$ is differentiable in a neighborhood of the optimal solution $\widehat{\vx}$ with gradient:
\[
\nabla \Phi_{\mu}(\widehat{\vx}) = \tfrac{2}{\mu^2} (\mathbf{1} - \widehat{\mathbf{\vomega}}) \circ \widehat{\vx}.
\]

For any $\vv \in \ker(A)$ and scalar $t$, 
the path $\vx(t) = \widehat{\vx} + t \vv$ is feasible since $A \vx(t) = A \widehat{\vx} = \vb$. 
As $\widehat{\vx}$ is a local minimizer, there exists $\epsilon > 0$ such that $\Phi_{\mu}(\widehat{\vx} + t \vv) \geq \Phi_{\mu}(\widehat{\vx})$ for $|t| < \epsilon$. Thus, $t = 0$ minimizes $\phi(t) := \Phi_{\mu}(\widehat{\vx} + t \vv)$, and for all $\vv \in \ker(A)$
\[
\phi'(0) = \nabla \Phi_{\mu}(\widehat{\vx})^\top \vv = 0.
\]
Hence, $\nabla \Phi_{\mu}(\widehat{\vx}) \perp \ker(A)$. Since $\ker(A)^\perp = \text{range}(A^\top)$ (as $A$ has full row rank), there exists $\widehat{\mathbf{y}} \in \mathbb{R}^m$ such that:
\[
\nabla \Phi_{\mu}(\widehat{\vx}) = - A^\top \widehat{\mathbf{y}}.
\]
Substituting $\nabla \Phi_{\mu}(\widehat{\vx})= \tfrac{2}{\mu^2} (\mathbf{1} - \widehat{\mathbf{\vomega}}) \circ \widehat{\vx}$, we obtain:
\[
\tfrac{2}{\mu^2} (\mathbf{1} - \widehat{\mathbf{\vomega}}) \circ \widehat{\vx} + A^\top \widehat{\mathbf{y}} = \mathbf{0}.
\]
Since $A \widehat{\vx} = \vb$, the necessary conditions hold for local  minimizers.

For sufficiency, assume that $\wvx$ satisfies $\abs{\wvx_i}\neq\mu$ for all $i$,  $\tfrac{2}{\mu^2}(\mathbf1-\widehat{\vomega})\circ \wvx + A^\top \wvy = \mathbf0$, and $A\wvx=\vb$.
Define
\[
0<\rho=\tfrac{1}{2}\min_i\{\abs{\abs{\wvx_i}-\mu} \},\quad \cI = \{i\mid\abs{\wvx_i}<\mu\}.
\]
%For any $\vv\in\ker(A)$ with $0<\norm{\vv}_\infty\leq \rho$,   $\wvx+\vv$ remains feasible, and we have
Let $\widetilde{\vx}$ be any feasible solution satisfying $A\widetilde{\vx}=\vb$ and $\norm{\widetilde{\vx}-\widehat{\vx}}_\infty\leq \rho$. Denote $\vv=\widetilde{\vx}-\widehat{\vx}$. Then we have $\vv\in\ker(A)$, and the following holds:
\[
\phi_{\mu}(\widetilde{\vx}_i)=\left\{
\begin{array}{ll}
1, &\mathrm{if}\;\abs{\wvx_i}>\mu, \\
\tfrac{1}{\mu^2}(\wvx_i+\vv_i)^2,&\mathrm{if}\;\abs{\wvx_i}<\mu.
\end{array}
\right.
\]
The difference $\Phi_{\mu}(\widetilde{\vx})-\Phi_{\mu}(\wvx)$ is given by
\[
\Phi_{\mu}(\widetilde{\vx})-\Phi_{\mu}(\wvx) = \tfrac{1}{\mu^2}\sum_{i\in\cI }\left((\wvx_i+\vv_i)^2-\wvx_i^2\right)=\tfrac{1}{\mu^2}\sum_{i\in\cI }\left(\vv_i^2+2\wvx_i\vv_i\right).
\]
Using $(A\vv)^\top \wvy=0$ and $\sum_{i\in\cI }\wvx_i\vv_i=\vv^\top ((\mathbf1-\widehat{\vomega})\circ \wvx )=\vv^\top ((\mathbf1-\widehat{\vomega})\circ \wvx +\tfrac{\mu^2}{2}A^\top \wvy)=0$,
we deduce:
\[
\Phi_{\mu}(\widetilde{\vx})-\Phi_{\mu}(\wvx) = \tfrac{1}{\mu^2}\sum_{i\in\cI}\vv_i^2\geq0.
\]
This inequality confirms that $\wvx$ is indeed a local minimizer.

\end{proof}

Theorem \ref{linearoptcond} requires the assumption that every $(n-1)$-column submatrix of $A$ has full row rank. We will assume that the matrix $A$ satisfies this property throughout the sequel.
Following \cite[Proposition 2]{nikolova2000thresholding}, we can easily {prove} the optimality condition for the noisy case.
\begin{theorem}\label{noise_optcond}
A vector $\widehat{\vx}$ is a local minimizer of \eqref{obj_singleu}  if and only if $\abs{\wvx_i}\neq \mu$ for all $i$, and there exists $\widehat{\vomega}\in\rr^n$ such that 
\[
\alpha A^\top(A\wvx-\vb) + \tfrac{2}{\mu^2}(\mathbf1-\widehat{\vomega})\circ \wvx = \mathbf0,
\]
where $\widehat\vomega=\cH_{\mu}(\widehat\vx)$, as defined in \eqref{porpoptw}. 
\end{theorem}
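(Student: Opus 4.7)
The plan is to mirror the necessity/sufficiency structure of Theorem \ref{linearoptcond}, but since \eqref{obj_singleu} is unconstrained we bypass the Lagrangian and work directly with $\nabla\cG_\mu$. All the heavy lifting has already been done by Theorem \ref{Gfuncontinus}: any local or global minimizer $\wvx$ of $\cG_\mu$ automatically satisfies $\abs{\wvx_i}\neq\mu$ for every $i$, and $\cG_\mu$ is differentiable in a neighborhood of $\wvx$. Combined with Lemma \ref{GradPhiTH}, this reduces the whole optimality analysis to a classical smooth calculation, exactly in the spirit of \cite[Proposition 2]{nikolova2000thresholding}.

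For necessity, let $\wvx$ be a local or global minimizer. Theorem \ref{Gfuncontinus} supplies $\abs{\wvx_i}\neq\mu$ for all $i$ and local differentiability, while Lemma \ref{GradPhiTH} gives $\nabla\Phi_\mu(\wvx)=\tfrac{2}{\mu^2}(\mathbf{1}-\widehat\vomega)\circ\wvx$ with $\widehat\vomega=\cH_\mu(\wvx)$. The first-order condition $\nabla\cG_\mu(\wvx)=\mathbf{0}$ then produces the stated stationary equation. For sufficiency, assume $\abs{\wvx_i}\neq\mu$ and the stated equation both hold, and set $\rho:=\tfrac{1}{2}\min_i\abs{\abs{\wvx_i}-\mu}>0$, so that every perturbation $\vv$ with $\norm{\vv}_\infty<\rho$ keeps each $\wvx_i+\vv_i$ on the same side of $\pm\mu$ as $\wvx_i$. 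Consequently $\cH_\mu(\wvx+\vv)=\widehat\vomega$ throughout this neighborhood, and a coordinatewise expansion yields
\[
\Phi_\mu(\wvx+\vv)-\Phi_\mu(\wvx)=\tfrac{2}{\mu^2}\vv^\top\big((\mathbf{1}-\widehat\vomega)\circ\wvx\big)+\tfrac{1}{\mu^2}\sum_{i:\abs{\wvx_i}<\mu}\vv_i^2.
\]
Combining this with the quadratic expansion of the fidelity term produces
\[
\cG_\mu(\wvx+\vv)-\cG_\mu(\wvx)=\vv^\top\Big(\alpha A^\top(A\wvx-\vb)+\tfrac{2}{\mu^2}(\mathbf{1}-\widehat\vomega)\circ\wvx\Big)+\tfrac{\alpha}{2}\norm{A\vv}_2^2+\tfrac{1}{\mu^2}\sum_{i:\abs{\wvx_i}<\mu}\vv_i^2,
\]
in which the bracketed factor vanishes by hypothesis and the remaining two terms are nonnegative, confirming that $\wvx$ is a local minimizer.

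Honestly, the argument is essentially routine once Theorem \ref{Gfuncontinus} is invoked; the only delicate point is the \emph{locally constant} nature of the filter $\cH_\mu$ near $\wvx$, which is precisely what the gap $\rho>0$ secures. This property is the linchpin that reduces $\Phi_\mu$ to a quadratic on the relevant neighborhood and allows the sufficiency computation to close in a single line. Without Theorem \ref{Gfuncontinus} one would otherwise have to cope directly with the non-differentiability of $\Phi_\mu$ at $\abs{x}=\mu$, but that obstacle has already been cleared upstream.
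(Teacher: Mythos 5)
Your proof is correct and follows exactly the route the paper intends: the paper omits the argument entirely, deferring to \cite[Proposition 2]{nikolova2000thresholding}, and your necessity step (Theorem \ref{Gfuncontinus} plus Lemma \ref{GradPhiTH} plus first-order stationarity) together with your sufficiency step (the gap $\rho$ freezing $\cH_\mu$ locally, then the exact quadratic expansion of $\cG_\mu$) is precisely the unconstrained analogue of the paper's own sufficiency computation in Theorem \ref{linearoptcond}. No gaps; the one delicate point you flag — local constancy of the filter — is handled correctly by the choice of $\rho$.
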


\section{Block coordinate descent method}\label{BCDmethod}

This section develops numerical methods to solve the proposed non-convex optimization problems \eqref{constrain_model} and \eqref{obj_singleu}. The numerical schemes rely on the minimization of a surrogate function, which will be introduced in section \ref{surgatefunc}.

\subsection{Surrogate function}\label{surgatefunc} 

The non-convexity of the $TH$ penalty $\Phi_{\mu}(\vx)$ complicates the direct application of optimality conditions. 
To address this issue, we introduce a surrogate function $\cQ_\mu(\vx, \vomega)$ that reformulates the original problem into a more tractable bi-variate optimization problem.
Structurally, $\cQ_\mu(\vx, \vomega)$ takes the form:
\begin{equation}\label{OptTransfun_rev}
\cQ_\mu(\vx, \vomega) = \tfrac{1}{\mu^2} \norm{(\mathbf{1} - \vomega) \circ \vx}_2^2 + \norm{\vomega}_0,
\end{equation}
where $\vomega \in \mathbb{R}^n$ is an auxiliary variable, and $\|\vomega\|_0$ counts the number of non-zero entries in $\vomega$. 
Then, the surrogate function satisfies $\Phi_{\mu}(\vx) \leq \cQ_\mu(\vx, \vomega)$  for any $\vx\in\rr^n.$
 
\begin{lemma}
\label{upperbound_lema2_rev}
For $\cQ_\mu(\vx, \vomega)$ defined in \eqref{OptTransfun_rev}, the properties  
\begin{equation}\label{surrogate_lemma}
    \cH_{\mu}(\vx) = \argmin_{\vomega\in\rr^n}\cQ_\mu(\vx, \vomega),\quad\text{and}\quad \Phi_{\mu}(\vx) = \min_{\vomega\in\rr^n} \cQ_\mu(\vx, \vomega),  \end{equation}
hold. Here, $\cH_{\mu}(\cdot)$ is defined in \eqref{porpoptw}.
\end{lemma}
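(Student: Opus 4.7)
The plan is to exploit the fact that $\cQ_\mu(\vx,\vomega)$ is additively separable in the entries of $\vomega$. Writing out the definition,
\[
\cQ_\mu(\vx,\vomega) \;=\; \sum_{i=1}^n \Big(\tfrac{\vx_i^2}{\mu^2}(1-\vomega_i)^2 + [\vomega_i \neq 0]\Big),
\]
where $[\vomega_i \neq 0]$ equals $1$ when $\vomega_i\neq 0$ and $0$ otherwise, so the joint minimization over $\vomega\in\rr^n$ decouples into $n$ one-dimensional problems. Both identities in \eqref{surrogate_lemma} will then follow by assembling the coordinate-wise conclusions.

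For the $i$-th scalar subproblem I would split on whether $\vomega_i$ is zero or non-zero, since the $L_0$-type indicator is binary. If $\vomega_i = 0$, the objective reduces to $\vx_i^2/\mu^2$; if $\vomega_i \neq 0$, the smooth quadratic $\tfrac{\vx_i^2}{\mu^2}(1-\vomega_i)^2 + 1$ is minimized at $\vomega_i = 1$ with value $1$. Comparing the two candidates yields
\[
\min_{\vomega_i\in\rr}\Big(\tfrac{\vx_i^2}{\mu^2}(1-\vomega_i)^2 + [\vomega_i\neq 0]\Big) \;=\; \min\big(\tfrac{\vx_i^2}{\mu^2},\,1\big) \;=\; \phi_\mu(\vx_i),
\]
and summing over $i$ gives $\min_{\vomega}\cQ_\mu(\vx,\vomega) = \Phi_\mu(\vx)$, which is the second equality in \eqref{surrogate_lemma}.

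For the argmin identity I would match the optimal scalar choice against the definition of $\cH_\mu$ in \eqref{porpoptw}. When $|\vx_i| < \mu$, the candidate $\vomega_i = 0$ strictly beats $\vomega_i = 1$, matching $(\cH_\mu(\vx))_i = 0$; when $|\vx_i| > \mu$, the candidate $\vomega_i = 1$ strictly wins, matching $(\cH_\mu(\vx))_i = 1$. Collecting these coordinate choices shows that $\cH_\mu(\vx)$ is a minimizer of $\cQ_\mu(\vx,\cdot)$.

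The argument is essentially a routine verification, because the surrogate $\cQ_\mu$ was engineered precisely so that minimizing out $\vomega$ reproduces the piecewise definition of $\phi_\mu$. The only minor subtlety, which is the main point to handle with care, is the degenerate boundary case $|\vx_i| = \mu$: both $\vomega_i = 0$ and $\vomega_i = 1$ attain the same minimum value, so $\arg\min_{\vomega}\cQ_\mu(\vx,\vomega)$ is set-valued at such points. The statement $\cH_\mu(\vx) = \arg\min_{\vomega}\cQ_\mu(\vx,\vomega)$ must therefore be read as asserting that $\cH_\mu(\vx)$ is the distinguished selection fixed by \eqref{porpoptw}. This ambiguity is harmless for the value identity and for the BCD scheme to follow.
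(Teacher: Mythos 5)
Your proof is correct and follows essentially the same route as the paper's: exploit separability in $\vomega$, minimize each coordinate by comparing the $\vomega_i=0$ and $\vomega_i\neq 0$ branches, and recover the hard-thresholding rule $\cH_\mu$. Your explicit remark that the argmin is set-valued at $|\vx_i|=\mu$ (so $\cH_\mu(\vx)$ is a distinguished selection rather than the unique minimizer) is a point the paper's proof glosses over, and is a welcome clarification.
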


\begin{proof}
The function $\cQ_\mu(\vx, \vomega)$ is separable in $\vomega$, i.e.,  
\[
\cQ_\mu(\vx, \vomega) = \sum_{i=1}^{n}\left(\tfrac{1}{\mu^2} (\vx_i (\vomega_i - 1))^2 + \abs{\vomega_i}^0\right),
\]
where $\abs{\vomega_i}^0$ is the indicator function that equals $1$ when $\vomega_i \neq 0$ and $0$ otherwise.
This separability allows us to solve $\min_{\vomega}\cQ_\mu(\vx, \vomega)$ by minimizing over each $\vomega_i$ independently. The component-wise minimization yields:   If $\vx_i = 0$, the minimum is achieved at $\widehat\vomega_i = 0$.
If $\vx_i \neq 0$, the subproblem reduces to  
\[
    \min_{\vomega_i} \left( (\vomega_i - 1)^2 + \tfrac{\mu^2}{\vx_i^2} |\vomega_i|^0 \right).  
\]  
The optimal solution follows a hard thresholding rule:  
\[
    \widehat{\vomega}_i = \begin{cases}  
    0, & |\vx_i| \leq \mu, \\  
    1, & |\vx_i| > \mu,  
    \end{cases}  
\]  
which corresponds to $\big(\cH_{\mu}(\vx)\big)_i$, as defined in \eqref{porpoptw}.

Combining both cases, we have $\widehat{\vomega} = \cH_{\mu}(\vx) = \arg\min_{\vomega} \cQ_\mu(\vx, \vomega)$. Substituting $\widehat{\vomega}$ into $\cQ_\mu(\vx, \vomega)$ leads to \eqref{surrogate_lemma}.
Thus the theorem holds.
\end{proof}

This result reformulates the problem $\min_{\vx \in \cS} \Phi_{\mu}(\vx)$  (similarly for $\min\cG_{\mu}(\vx)$) into the bi-variate optimization problem $\min_{\vx \in \cS, \vomega\in\rr^n} \cQ_\mu(\vx, \vomega)$, enabling more efficient optimization.

To solve the bivariate optimization problem, we adopt the block coordinate descent (BCD) method \cite{tseng2001convergence}. 
This iterative approach alternately optimizes $\vx$ and $\vomega$, with the other variable fixed, ensuring a monotonic decrease in the objective function. 
{
The subproblem for $\vomega$ is separable, allowing decomposition into independent univariate optimization problems, while the subproblem for $\vx$ admits a closed-form solution. 
}
This structure enables the BCD method to efficiently identify the minimizer. The following subsections develop tailored numerical algorithms within this framework for both noise-free and noisy scenarios.

\subsection{Numerical scheme for noise-free case}\label{noiselessalgsection}

For the noise-free case, we consider the constrain model \eqref{constrain_model}. 
According to Lemma \ref{upperbound_lema2_rev}, we transform this model into the following bi-variate optimization problem: 
\begin{equation}\label{BCDconstrain_prob} 
(\wvx,\widehat{\vomega})=\arg\min_{A\vx=\vb,\vomega}\cQ_\mu(\vx, \vomega).
\end{equation}
At the $j+1$-th iteration, the BCD scheme is: 
\begin{equation}\label{BCDschemefornoiseless}
\left\{
\begin{aligned}
&\wvx^{j+1} \in\arg\min _{A\vx=\vb }\cQ_\mu(\vx,\widehat{\vomega}^{j}),\\
&\widehat{\vomega}^{j+1}\in\arg\min_{\vomega}\cQ_\mu(\wvx^{j+1},\vomega). 
\end{aligned}
\right.
\end{equation}
Assume that $\mathrm{spark}(A)=m+1$ with $A\in\rr^{m\times n}$ ($m\ll n$), and select $\cQ_{\mu}(\wvx^{0},\widehat\vomega^{0})<\mathrm{spark}(A)$.
For the $\vx$-subproblem, the corresponding Lagrangian is:
\[
\cL_1(\vx,\vy;\mu)=\tfrac{1}{\mu^2}\LS{(\mathbf{1}-\widehat\vomega^{j})\circ\vx}+\norm{\widehat\vomega^{j}}_0+\vy^\top(A\vx-\vb),
\]
where $\vy$ is the Lagrangian multiplier.
By Kuhn-Tucker conditions (KKT) \cite{boyd2004convex}, there exists $\wvy^{j+1}$ such that
\begin{equation}\label{opticonbcd}
    \tfrac{2}{\mu^2}(1 - \widehat{\vomega}^{j}) \circ \wvx^{j+1} + A^\top \wvy^{j+1} = \mathbf0,~\text{and}~ A\wvx^{j+1}=\vb,
\end{equation}
where $\widehat{\vomega}^{j}\in\arg\min_{\vomega}\cQ_{\mu}(\wvx^{j},\vomega)$ is a binary vector, according to Lemma \ref{upperbound_lema2_rev}.
Let $\widehat\vomega^{j}_{i}$ be the $i$-th element of $\widehat{\vomega}^{j}$ and denote $A_\cI$ as the submatrix of $A$ containing the columns indexed by $\cI\subset\{1,\ldots,n\}$. 
Partition $A$ into $A_{\cI^{j}_{1}}$ and $A_{\cI^{j}_{2}}$ with 
\begin{equation}\label{indexfornoiseless}
   \cI^{j}_{1} = \{i \mid  \widehat\vomega^{j}_{i} = 1,~i=1,\ldots,n\},~\cI^{j}_{2} = \{i \mid  \widehat\vomega^{j}_{i} = 0,~i=1,\ldots,n\}.
\end{equation}
The condition \eqref{opticonbcd} leads to
\[
A_{\cI^{j}_1}^\top \wvy^{j+1} = \mathbf{0}, \quad A_{\cI^{j}_2}^\top \wvy^{j+1} + \tfrac{2}{\mu^2} \wvx_{\cI^{j}_2}^{j+1} = \mathbf{0}.  
\]  
From \eqref{BCDschemefornoiseless}, it follows that 
\[
\norm{\widehat\vomega^{j}}_0\leq\cQ_{\mu}(\wvx^{j},\widehat\vomega^{j})\leq\cQ_{\mu}(\wvx^{j},\widehat\vomega^{j-1})\leq\cQ_{\mu}(\wvx^{j-1},\widehat\vomega^{j-1})\leq\cdots\leq\cQ_{\mu}(\wvx^{0},\widehat\vomega^{0}).
\]
Thus, $\mathrm{card}(\cI_1^j)=\norm{\widehat\vomega^{j}}_0\leq m$ and $\mathrm{card}(\cI_2^j)\geq (n-m)>m$. 
% {add $\mathrm{card}(\cI_1^j)$}
Since $m=\mathrm{spark}(A)-1\leq\mathrm{rank}(A)$, the matrices $A_{\cI^{j}_{1}}^\top A_{\cI^{j}_{1}}$ and $A_{\cI^{j}_{2}}A_{\cI^{j}_{2}}^\top$ are invertible.
Using the constraint $A_{\cI^{j}_1} \wvx_{\cI^{j}_1}^{j+1} + A_{\cI^{j}_2} \wvx_{\cI^{j}_2}^{j+1} = \vb$, the update of $\wvx^{j+1}$ is: 
\begin{align}
\label{updateforx_I1}
\wvx_{\cI^{j}_{1}}^{j+1} &= \left(A_{\cI^{j}_{1}}^\top\left(A_{\cI^{j}_{2}}A_{\cI^{j}_{2}}^\top\right)^{-1}A_{\cI^{j}_{1}}\right)^{-1}A_{\cI^{j}_{1}}^\top\left(A_{\cI^{j}_{2}}A_{\cI^{j}_{2}}^\top\right)^{-1}\vb, \\
\label{updateforx_I2}
\wvy^{j+1} &= \tfrac{2}{\mu^2}\left(A_{\cI^{j}_{2}}A_{\cI^{j}_{2}}^\top\right)^{-1}(A_{\cI^{j}_{1}}\wvx_{\cI^{j}_{1}}^{j+1} - \vb),\\
\label{formulation_y}
\wvx_{\cI^{j}_{2}}^{j+1} &= -\tfrac{\mu^2}{2}A_{\cI^{j}_{2}}^\top \wvy^{j+1}.
\end{align}
For the $\vomega$-subproblem, the update at the $j+1$-th iteration is:
\begin{equation}\label{updateforw}
    \widehat{\vomega}^{j+1} = \cH_{\mu}(\wvx^{j+1}),
\end{equation}
where $\cH_\mu$ is defined in \eqref{porpoptw}.

The complete algorithm for the noise-free case is summarized in Algorithm \ref{BCD}.
\begin{algorithm}
\caption{BCD-TH sparse signal recovery}
\label{BCD}
\renewcommand{\algorithmicrequire}{\textbf{Input:}}
\renewcommand{\algorithmicensure}{\textbf{Output:}}
\begin{algorithmic}[1]
\REQUIRE $A$, $\vb$, threshold $\mu$, $\alpha$
\ENSURE $(\wvx,\widehat{\vomega})$   %%output
\STATE  Initialize $(\wvx ^0, \widehat\vomega^0)$ and set  $j=0$
\WHILE{stopping criterion \eqref{stopcondition} is not satisfied}
\STATE Compute $\cI_{1}^j$ and $\cI_{2}^j$ from \eqref{indexfornoiseless}
\STATE Update $\wvx^{j+1}$ by \eqref{updateforx_I1}-\eqref{updateforx_I2} [noise-free \eqref{constrain_model}], or by \eqref{noiseupdateforx_I1}-\eqref{noiseupdateforx_I2} [noisy \eqref{obj_singleu}]
\STATE Update $\widehat{\vomega}^{j+1}$ using \eqref{updateforw}
\STATE $j=j+1$
\ENDWHILE
\RETURN $(\wvx\leftarrow\wvx^{j+1},\widehat{\vomega}\leftarrow\widehat{\vomega}^{j+1})$
\end{algorithmic}
\end{algorithm}

\subsection{Numerical scheme for the noisy case}
In the presence of noise, we consider the minimization problem \eqref{obj_singleu}. 
By Lemma \ref{upperbound_lema2_rev}, this problem can be reformulated as:  
\begin{equation}\label{thb_regularization}
(\wvx,\widehat{\vomega})=\arg\min_{\vx,\vomega}\cJ_\mu(\vx,\vomega) := \frac{\alpha}{2}\norm{A\vx - \vb}_2^2+\cQ_\mu(\vx, \vomega).
\end{equation}

The BCD scheme at the $j+1$-th iteration follows 
\begin{equation}\label{phi0bcmobj}
\left\{
\begin{aligned}
&\wvx^{j+1}\in\arg\min_{\vx }\cJ_\mu(\vx,\widehat{\vomega}^{j}),\\
&\widehat\vomega^{j+1}\in\arg\min_{\vomega}\cQ_\mu(\wvx^{j+1}, \vomega)
\end{aligned}
\right.
\end{equation}
The update for $\vomega$ remains the same as in the noise-free case, given by \eqref{updateforw}. 
For the $\vx$-subproblem, {
the solution satisfies:
\[
\left( \alpha A^\top A + \frac{2}{\mu^2} \operatorname{diag}(\mathbf{1} - \widehat{\vomega}^j) \right) \vx^{j+1} = \alpha A^\top \vb. 
\]
To reduce the computational complexity of solving this $n \times n$ linear system, particularly when $n$ is large, we introduce an auxiliary variable $\vz = A\vx$ to reformulate the problem as a smaller system. This yields the equivalent optimization problem:
}
% we introduce an auxiliary variable $\vz$ to obtain an equivalent reformulation: 
\[
\min_{\vz=A\vx} \tfrac{\alpha}{2}\LS{\vz-\vb}+\tfrac{1}{\mu^2}\LS{(\mathbf1-\widehat\vomega^j)\circ\vx}+\norm{\widehat\vomega^j}_0.
\]
The corresponding Lagrangian function is
\[
    \cL_2(\vx,\vz,\vy;\mu)=\tfrac{\alpha}{2}\LS{\vz-\vb}+\tfrac{1}{\mu^2}\LS{(\mathbf{1}-\widehat\vomega^j)\circ\vx}+\vy^\top(A\vx-\vz),
\]
where $\vy\in\rr^m$ is the Lagrange multiplier.
By the optimality conditions, there exist $\wvy^{j+1}$ such that
\begin{equation}\label{opticonoisebcd}
    \tfrac{2}{\mu^2}(1 - \widehat{\vomega}^{j}) \circ \wvx^{j+1} + A^\top \wvy^{j+1} = \mathbf0,~\text{and}~ A\wvx^{j+1}=\vb+\tfrac{1}{\alpha}\widehat\vy^{j+1}.
\end{equation}
Following the same partitioning \eqref{indexfornoiseless} as in the noise-free case, 
we obtain the index sets $\cI_1^j$ and $\cI_2^j$.
This leads to the updated rules
\begin{align}
\label{noiseupdateforx_I1}
\wvx_{\cI^{j}_{1}}^{j+1} &= \left(A_{\cI^{j}_{1}}^\top\left(\tfrac{1}{\alpha}\mathrm{I}+\tfrac{\mu^2}{2}A_{\cI^{j}_{2}}A_{\cI^{j}_{2}}^\top\right)^{-1}A_{\cI^{j}_{1}}\right)^{-1}A_{\cI^{j}_{1}}^\top\left(\tfrac{1}{\alpha}\mathrm{I}+\tfrac{\mu^2}{2}A_{\cI^{j}_{2}}A_{\cI^{j}_{2}}^\top\right)^{-1}\vb, \\
\label{noiseupdateforx_I2}
\wvx_{\cI^{j}_{2}}^{j+1} &= -\tfrac{\mu^2}{2}A_{\cI^{j}_{2}}^\top \wvy^{j+1},
\end{align}
where $\wvy^{j+1} = \left(\tfrac{1}{\alpha}\mathrm{I}+\tfrac{\mu^2}{2}A_{\cI^{j}_{2}}A_{\cI^{j}_{2}}^\top\right)^{-1}(A_{\cI^{j}_{1}}\wvx_{\cI^{j}_{1}}^{j+1} - \vb).$
The detailed procedure is also summarized in Algorithm \ref{BCD}.

\subsection{Convergence analysis}\label{convergenceanly}
This subsection establishes the convergence properties of the proposed Algorithm \ref{BCD} in both noise-free and noisy settings. 
The analysis relies on the uniqueness and boundedness of the solution of a linear system, as well as the monotonicity of the objective function.
\begin{theorem}%[Uniqueness and Boundedness of Solution]
\label{thm:uniqueness_boundedness}
Let $\lambda\in\rr_{++}$, $\vc \in \mathbb{R}^n$, $A \in \mathbb{R}^{m \times n}$ ($m\ll n$), and $D\in\rr^{n\times n}$ be a diagonal matrix with diagonal entries $D_{ii} \in \{0, 1\}$. 
% If the number of zero diagonal entries in $D$ is less than  $\mathrm{spark}(A)$, the solution $\vx$ to the linear system
{
Define $\mathcal{I} = \{i \mid D_{ii} = 0, i=1,\dots,n\}$. If $\mathrm{card}(\mathcal{I}) < \mathrm{spark}(A)$, then the solution $\vx$ to the linear system
}
\[
(A^\top A + \lambda D)\vx = \vc,
\]
is unique and bounded.
\end{theorem}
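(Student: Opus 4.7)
The plan is to show that the matrix $M := A^\top A + \lambda D$ is symmetric positive definite under the given spark condition; then unique solvability follows immediately and the bound $\|\vx\|_2 \le \|M^{-1}\|_2 \|\vc\|_2$ gives boundedness.

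First I would observe that $M$ is symmetric and positive semidefinite, since for any $\vx \in \mathbb{R}^n$,
\[
\vx^\top M \vx = \|A\vx\|_2^2 + \lambda \vx^\top D \vx = \|A\vx\|_2^2 + \lambda \sum_{i : D_{ii}=1} \vx_i^2 \ge 0,
\]
using $\lambda > 0$ and $D_{ii} \in \{0,1\}$. Hence it suffices to prove that the quadratic form vanishes only at $\vx = \mathbf 0$.

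Next I would partition the index set into $\cI_1 = \{i : D_{ii}=1\}$ and $\cI_2 = \{i : D_{ii}=0\}$. Suppose $\vx^\top M \vx = 0$. Since both terms above are nonnegative, each must vanish: this forces $\vx_i = 0$ for every $i \in \cI_1$ and $A\vx = \mathbf 0$. Consequently $\vx$ is supported in $\cI_2$, and the condition $A\vx = \mathbf 0$ reduces to $A_{\cI_2} \vx_{\cI_2} = \mathbf 0$, i.e., the columns of $A$ indexed by $\cI_2$ admit a nonzero dependence relation unless $\vx_{\cI_2} = \mathbf 0$.

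The key step is then to invoke the hypothesis. By assumption, $|\cI_2| < \mathrm{spark}(A)$, and the definition of $\mathrm{spark}(A)$ (the smallest number of linearly dependent columns of $A$) implies that every collection of fewer than $\mathrm{spark}(A)$ columns of $A$ is linearly independent. Therefore $A_{\cI_2}$ has linearly independent columns, forcing $\vx_{\cI_2} = \mathbf 0$ and hence $\vx = \mathbf 0$. Thus $M \succ 0$, so $M$ is invertible, the solution $\vx = M^{-1}\vc$ is unique, and the bound $\|\vx\|_2 \le \|\vc\|_2 / \sigma_{\min}(M)$ with $\sigma_{\min}(M) > 0$ gives the boundedness claim. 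The only subtle point is correctly identifying that the spark condition applies to $A_{\cI_2}$ (the zero-diagonal block) rather than $A_{\cI_1}$; everything else is a direct algebraic verification.
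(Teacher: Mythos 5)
Your proof is correct, but it takes a different route from the paper. You establish that $M = A^\top A + \lambda D$ is positive definite directly from the quadratic form: $\vx^\top M\vx = 0$ forces $A\vx = \mathbf 0$ and $\vx_{\cI_1} = \mathbf 0$, so any nonzero null vector of $M$ would be supported on $\cI_2$ and would exhibit a linear dependence among fewer than $\mathrm{spark}(A)$ columns of $A$ --- a contradiction. The paper instead reorders the variables into the blocks $\cI$ (zero diagonal entries of $D$) and $\cI^c$, uses the spark condition only to get invertibility of $A_{\cI}^\top A_{\cI}$, and then shows the Schur complement $B = \lambda I + A_{\cI^c}^\top\bigl(I - A_{\cI}(A_{\cI}^\top A_{\cI})^{-1}A_{\cI}^\top\bigr)A_{\cI^c}$ is positive definite, concluding invertibility from the block factorization. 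Your argument is shorter and more elementary, and it makes the role of the spark hypothesis transparent (it is exactly what rules out a null vector supported on the unregularized coordinates). The paper's block decomposition has the side benefit of mirroring the index partition $\cI_1^j,\cI_2^j$ used in the algorithm's update formulas, so it sits closer to the computations elsewhere in the paper, but as a proof of the stated theorem both arguments are equally valid and yield the same bound $\|\vx\|_2 \le \|M^{-1}\|_2\|\vc\|_2$.
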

\begin{proof}
{
Notice that the spark of a matrix $ A $ is the smallest number of columns that are linearly dependent. For $ A \in \mathbb{R}^{m \times n} $ with $ m \ll n $, $\mathrm{spark}(A) \leq m + 1$, and if $ A $ has full row rank, $\mathrm{spark}(A) = m + 1$.
}
% Notice that $\mathrm{spark}(A)\leq m+1$, as any $m+1$ columns of $A$ are linearly dependent. 
The assumption $\mathrm{card}(\cI)<\mathrm{spark}(A)$ ensures that $A_{\cI}$ is full column rank, implying that $A_{\cI}^\top A_{\cI}$ is invertible.

Reordering indices, the linear system can be rewritten as
\[
\begin{pmatrix}
A_{\cI}^\top A_{\cI} & A_{\cI}^\top A_{\cI^c} \\
A_{\cI^c}^\top A_{\cI} & A_{\cI^c}^\top A_{\cI^c} + \lambda  I 
\end{pmatrix}
\begin{pmatrix}
\vx_{\cI} \\ \vx_{\cI^c}
\end{pmatrix}
=
\begin{pmatrix}
\vc_{\cI} \\ \vc_{\cI^c}
\end{pmatrix}
,
\]
where $\cI^c$ is the complement of $\cI$.
To analyze the invertibility of the coefficient matrix, we consider the Schur complement:
\[
\begin{pmatrix}
  A_{\cI}^\top A_{\cI} &  A_{\cI}^\top A_{\cI^c} \\
  A_{\cI^c}^\top A_{\cI} &   A_{\cI^c}^\top A_{\cI^c} + \lambda  I 
\end{pmatrix}=\begin{pmatrix}
 I & 0 \\
  A_{\cI^c}^\top A_{\cI}\big(A_{\cI}^\top A_{\cI} \big)^{-1} &  I
\end{pmatrix}\begin{pmatrix}
  A_{\cI}^\top A_{\cI} &  A_{\cI}^\top A_{\cI^c} \\
 0  &   B
\end{pmatrix},
\]
where 
$
B =  \lambda  I + A_{\cI^c}^\top ( I  - A_{\cI}(A_{\cI}^\top A_{\cI})^{-1}A_{\cI}^\top) A_{\cI^c} .
$
Since $A_{\cI^c}^\top ( I  - A_{\cI}(A_{\cI}^\top A_{\cI})^{-1}A_{\cI}^\top) A_{\cI^c}$ is  symmetric and positive semi-definite,
$B$ is positive definite and invertible.
Consequently, $(A^\top A + \lambda D)$ is invertible, ensuring a unique solution.
Moreover, the solution satisfies
\[
\vx = (A^\top A + \lambda D)^{-1} \vc .
\]
The norm of $\vx$ is bounded by
$\|\vx\|_2 \leq \|(A^\top A + \lambda D)^{-1}\|_2 \|\vc\|_2$.
Since $(A^\top A + \lambda D)$ is invertible, $\|(A^\top A + \lambda D)^{-1}\|_2$ is finite, implying that $\vx$ is bounded.  This completes the proof.
\end{proof}

We now establish the convergence of the Algorithm \ref{BCD} of the noise-free model \eqref{constrain_model}.

\begin{theorem}\label{covgergalg1}
Assume that $\mathrm{spark}(A)=m+1$ and the initial value $\cQ_\mu(\wvx^0,\widehat\vomega^0)< \mathrm{spark}(A)$. Let $\{(\wvx^{k},\widehat{\vomega}^{k})\}_k$ denote the sequence generated by Algorithm \ref{BCD}. Then, the sequence $\{(\wvx^{k},\widehat{\vomega}^{k})\}_k$ is bounded and converges to the minimizer of model \eqref{constrain_model} in a finite number of steps. 
\end{theorem}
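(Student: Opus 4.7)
The proof plan rests on three ingredients: BCD monotonicity of the surrogate $\cQ_\mu$, the fact that the $\vomega$-iterate is always a binary vector whose support is bounded by $m$, and the uniqueness/boundedness statement of Theorem \ref{thm:uniqueness_boundedness}.

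\textbf{Monotonicity and the spark bound.} Because each half-step of \eqref{BCDschemefornoiseless} exactly minimizes $\cQ_\mu$ in one block,
\[
\cQ_\mu(\wvx^{(k+1)},\widehat\vomega^{(k+1)})\leq \cQ_\mu(\wvx^{(k+1)},\widehat\vomega^{k})\leq \cQ_\mu(\wvx^{k},\widehat\vomega^{k}),
\]
so $\{\cQ_\mu(\wvx^{k},\widehat\vomega^{k})\}_k$ is non-increasing. By Lemma \ref{upperbound_lema2_rev} each $\widehat\vomega^{k}$ is binary, hence
\[
\|\widehat\vomega^{k}\|_0\leq \cQ_\mu(\wvx^{k},\widehat\vomega^{k})\leq \cQ_\mu(\wvx^{0},\widehat\vomega^{0})<m+1,
\]
which gives $|\cI_1^{k}|=\|\widehat\vomega^{k}\|_0\leq m<\mathrm{spark}(A)$ and $|\cI_2^{k}|\geq n-m$ for every $k$. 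Under these cardinality bounds the spark condition puts the $\vx$-subproblem exactly in the setting of Theorem \ref{thm:uniqueness_boundedness}: after eliminating $\vy$, the KKT system \eqref{opticonbcd} becomes an invertible linear system, so $\wvx^{(k+1)}$ is uniquely determined by $\widehat\vomega^{k}$ through \eqref{updateforx_I1}--\eqref{formulation_y}, with a bound on $\|\wvx^{(k+1)}\|$ that is uniform over the finitely many admissible index partitions.

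\textbf{Finite termination via a finite-state argument.} Since $\wvx^{(k+1)}$ is a deterministic function of $\widehat\vomega^{k}$, the update $\widehat\vomega^{k}\mapsto\widehat\vomega^{(k+1)}=\cH_\mu(\wvx^{(k+1)})$ defines a deterministic self-map of the finite set $\{0,1\}^n$, whose orbit must enter a cycle after at most $2^n$ iterations. Monotonicity of $\cQ_\mu$ forces any such cycle to be trivial: on a cycle, $\cQ_\mu$ is constant, and uniqueness of the $\vx$-subproblem minimizer combined with $\widehat\vomega^{(k+1)}=\cH_\mu(\wvx^{(k+1)})$ yields $(\wvx^{(k+1)},\widehat\vomega^{(k+1)})=(\wvx^{k},\widehat\vomega^{k})$. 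Thus the algorithm reaches a fixed point $(\wvx^\star,\widehat\vomega^\star)$ in finitely many iterations, and boundedness of the whole sequence follows because it is eventually constant.

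\textbf{Identification with a minimizer of \eqref{constrain_model}.} At the fixed point the relations $A\wvx^\star=\vb$, $\widehat\vomega^\star=\cH_\mu(\wvx^\star)$ and the stationarity equation $\tfrac{2}{\mu^2}(\mathbf{1}-\widehat\vomega^\star)\circ\wvx^\star+A^\top\wvy^\star=\mathbf{0}$ coming from \eqref{opticonbcd} are precisely the sufficient conditions of Theorem \ref{linearoptcond}, provided $|\wvx^\star_i|\neq\mu$ for every $i$. \emph{This exclusion of the non-differentiable boundary is the main obstacle of the proof:} a priori, an index $i\in\cI_2^\star$ could satisfy $|\wvx^\star_i|=\mu$, yielding a fixed point of the BCD iteration that is not covered by Theorem \ref{linearoptcond}. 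I would rule this case out by a local perturbation argument in the spirit of Theorem \ref{THdifferentiable_refined}: at such an $i$, one can construct a feasible direction along which $\Phi_\mu$ strictly decreases, contradicting either the stationarity at $(\wvx^\star,\widehat\vomega^\star)$ or the minimality of the $\vx$-subproblem. Once the boundary case is excluded, Theorem \ref{linearoptcond} directly identifies $\wvx^\star$ as a local minimizer of \eqref{constrain_model}, which combined with finite termination completes the proof.
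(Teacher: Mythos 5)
Your proposal is essentially sound and rests on the same three ingredients as the paper's proof: monotone descent of $\cQ_\mu$ along the BCD iterates, the spark bound $\|\widehat\vomega^{k}\|_0<\mathrm{spark}(A)$ that makes the $\vx$-update uniquely determined and bounded (Theorem \ref{thm:uniqueness_boundedness}), and identification of the limit via the optimality conditions of Theorem \ref{linearoptcond}. Where you genuinely diverge is the mechanism for finite termination. The paper extracts a convergent subsequence $(\wvx^{k_j},\widehat\vomega^{k_j})\to(\wvx,\widehat\vomega)$, uses the binary nature of $\widehat\vomega^{k}$ to conclude the subsequence of $\vomega$-iterates is eventually constant, and then propagates equality forward by induction. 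Your finite-state argument --- the map $\widehat\vomega^{k}\mapsto\cH_\mu(\wvx^{(k+1)})$ is a deterministic self-map of a finite set, so the orbit enters a cycle, and forcing equality throughout the chain $\cQ_\mu(\wvx^{(k+1)},\widehat\vomega^{(k+1)})\leq\cQ_\mu(\wvx^{(k+1)},\widehat\vomega^{k})\leq\cQ_\mu(\wvx^{k},\widehat\vomega^{k})$ together with uniqueness of the $\vx$-subproblem minimizer collapses the cycle to a fixed point --- reaches the same conclusion more directly, yields an explicit (if exponential) iteration bound, and avoids the subsequence/limit step, where the paper implicitly passes the discontinuous map $\cH_\mu$ through a limit. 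Both routes are valid.

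On the final identification step, you correctly observe that Theorem \ref{linearoptcond} is an ``if and only if'' whose sufficiency direction requires $|\wvx^\star_i|\neq\mu$ for all $i$, and that a BCD fixed point could in principle violate this: an index with $|\wvx^\star_i|=\mu$ is assigned $\widehat\vomega^\star_i=0$, hence lies in $\cI_2^\star$, and nothing in the formula $\wvx^\star_{\cI_2}=-\tfrac{\mu^2}{2}A_{\cI_2}^\top\wvy^\star$ forbids a component of magnitude exactly $\mu$. The paper's proof invokes Theorem \ref{linearoptcond} without checking this condition. However, your proposed repair does not close the gap: constructing a feasible direction along which $\Phi_\mu$ strictly decreases, as in Theorem \ref{THdifferentiable_refined}, contradicts the optimality of $\wvx^\star$ for \eqref{constrain_model} --- which is the conclusion you are trying to establish, not a hypothesis --- and it contradicts neither the stationarity of the surrogate nor the minimality of $\wvx^\star$ for the subproblem $\min_{A\vx=\vb}\cQ_\mu(\vx,\widehat\vomega^\star)$. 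As written, this degenerate boundary case is left open by your argument and by the paper's alike; excluding it would require either a genericity assumption on the data or a separate argument showing the BCD fixed point cannot land exactly on the breakpoint.
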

\begin{proof}
For any $k\geq1$, the sequence of objective values satisfies 
\[\|\widehat\vomega^{k-1}\|_0\leq\cQ_\mu(\wvx^{k-1},\widehat\vomega^{k-1})\leq\cdots\leq\cQ_\mu(\wvx^0,\widehat\vomega^0)<\mathrm{spark}(A).
\]
This implies that $\mathbf{1}-\widehat\vomega^{k-1}$ has fewer than $\mathrm{spark}(A)$ zero entries.
    From the optimality condition \eqref{opticonbcd}, we have
    \[
    \big(A^\top A + \tfrac{2}{\mu^2}\mathrm{diag}(\mathbf{1}-\widehat\vomega^{k-1})\big)\wvx^k=A^\top\vb-A^\top\widehat\vy^{k}.
    \] 
Since $\mathrm{spark}(A)=m+1$,  the existence of $\widehat{\vy}^k$ is guaranteed (see \eqref{formulation_y}).
According to Theorem \ref{thm:uniqueness_boundedness} and the update rule in \eqref{updateforw}, the sequence $\{(\wvx^k,\widehat{\vomega}^k)\}_k$ is bounded. 
There exists a convergent subsequence $\{(\wvx^{k_j}, \widehat{\vomega}^{k_j})\}_j$ such that $\lim_{j \to \infty}(\wvx^{k_j}, \widehat{\vomega}^{k_j}) = (\wvx, \widehat{\vomega})$.
At the limit point, the following hold:
\begin{align*}
\big(A^\top A+\tfrac{2}{\mu^2}\mathrm{diag}(\mathbf{1}-\widehat\vomega)\big)\wvx&=A^\top \vb-A^\top\widehat\vy, \\
\widehat\vomega &= \cH_{\mu}(\wvx).
\end{align*}
Theorem  \ref{linearoptcond} establishes that $(\wvx, \widehat{\vomega})$ is a minimizer of the model \eqref{constrain_model}.
Due to the binary property of $\{\widehat{\vomega}^k\}_k$, there exists a sufficiently large $K$ such that for all $k_j > K$:  
\[
\widehat{\vomega}^{k_j} = \widehat{\vomega}\quad \mathrm{and}\quad \|\widehat{\vomega}\|_0 < \mathrm{spark}(A).
\]
Combined with \eqref{indexfornoiseless}, the index sets based on  
$\widehat{\vomega}^{k_j}$ matches those of the limit point $\widehat{\vomega}$. 
Using the update rules \eqref{updateforx_I1} and \eqref{updateforx_I2}, it follows that 
$$ \wvx^{k_j+1}=\wvx.$$
Consequently, we obtain
$$\widehat{\vomega}^{k_j+1} = \cH_{\mu}(\wvx^{k_j+1}) = \cH_{\mu}(\wvx) = \widehat{\vomega}.$$
Thus, $(\wvx^{k_j+1}, \widehat{\vomega}^{k_j+1}) = (\wvx, \widehat{\vomega})$. 
By induction,  for all $k>K$, we have $(\wvx^{k}, \widehat{\vomega}^{k}) = (\wvx, \widehat{\vomega})$.
This demonstrates that $\{(\wvx^k,\widehat{\vomega}^k)\}_k$ converges to the minimizer of model \eqref{constrain_model} in a finite number of steps.
\end{proof}

For the noisy case, we analyze the convergence of Algorithm \ref{BCD} of model \eqref{obj_singleu}.
Unlike Theorem \ref{covgergalg1}, this analysis does not require any assumption on $A$.

\begin{theorem}\label{convergalg2} 
Let the initial point $(\wvx^{0},\widehat{\vomega}^{0})$ satisfy $\cJ_\mu(\wvx^{0},\widehat{\vomega}^{0})< \operatorname{spark}(A)$. 
Let $\{(\wvx^{k},\widehat{\vomega}^{k})\}_k$ denote the sequence generated by  Algorithm \ref{BCD}. 
Then, the sequence $\{(\wvx^k,\widehat\vomega^k)\}_k$ is bounded and converges to the minimizer of model \eqref{obj_singleu} in a finite number of steps.
\end{theorem}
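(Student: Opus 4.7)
The plan is to adapt the argument of Theorem \ref{covgergalg1}, replacing the exact-spark assumption $\mathrm{spark}(A)=m+1$ by the observation that the quadratic fidelity term $\tfrac{\alpha}{2}\|A\vx-\vb\|_2^2$ in $\cJ_\mu$ automatically supplies the necessary coercivity, so no additional structural hypothesis on $A$ is needed. Four ingredients are required: monotone descent of $\cJ_\mu$, a uniqueness/boundedness statement for the $\vx$-subproblem obtained from Theorem \ref{thm:uniqueness_boundedness}, a finite-termination argument exploiting the binary nature of $\widehat\vomega$, and a verification that the terminal point satisfies the optimality conditions of Theorem \ref{noise_optcond}.

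First, I would use the standard BCD descent identity on \eqref{phi0bcmobj} together with the pointwise inequality $\|\vomega\|_0 \leq \cQ_\mu(\vx,\vomega) \leq \cJ_\mu(\vx,\vomega)$ to deduce, for every $k \geq 1$,
\[
\|\widehat\vomega^{(k-1)}\|_0 \;\leq\; \cJ_\mu(\wvx^{(k-1)},\widehat\vomega^{(k-1)}) \;\leq\; \cJ_\mu(\wvx^0,\widehat\vomega^0) \;<\; \mathrm{spark}(A).
\]
Next I would eliminate $\wvy^k$ from the first-order condition \eqref{opticonoisebcd} and rewrite the $\vx$-update as the symmetric linear system
\[
\Bigl(A^\top A + \tfrac{2}{\alpha\mu^2}\mathrm{diag}(\mathbf{1}-\widehat\vomega^{(k-1)})\Bigr)\wvx^k \;=\; A^\top \vb.
\]
Since $\mathrm{diag}(\mathbf{1}-\widehat\vomega^{(k-1)})$ has exactly $\|\widehat\vomega^{(k-1)}\|_0 < \mathrm{spark}(A)$ zero diagonal entries, Theorem \ref{thm:uniqueness_boundedness} applies with $\lambda = 2/(\alpha\mu^2)$ and guarantees that $\wvx^k$ exists, is unique, and is uniformly bounded. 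The update rule \eqref{updateforw} then confines $\widehat\vomega^k$ to $\{0,1\}^n$, so the whole sequence $\{(\wvx^k,\widehat\vomega^k)\}_k$ is bounded.

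Finite-step termination would follow by extracting a convergent subsequence $(\wvx^{k_j},\widehat\vomega^{k_j}) \to (\wvx,\widehat\vomega)$ and using the binary structure of $\widehat\vomega^{k_j}$ to infer that $\widehat\vomega^{k_j} = \widehat\vomega$ for all sufficiently large $j$. Once equality holds, uniqueness in the linear system above forces $\wvx^{(k_j+1)} = \wvx$, and then \eqref{updateforw} gives $\widehat\vomega^{(k_j+1)} = \cH_\mu(\wvx) = \widehat\vomega$; by induction the iterates remain at $(\wvx,\widehat\vomega)$ for all $k > k_j$. Passing to the limit in the stationarity relations recovers
$\alpha A^\top(A\wvx-\vb) + \tfrac{2}{\mu^2}(\mathbf{1}-\widehat\vomega)\circ\wvx = \mathbf{0}$ together with $\widehat\vomega = \cH_\mu(\wvx)$, which is exactly the optimality condition of Theorem \ref{noise_optcond}, so $\wvx$ minimizes \eqref{obj_singleu}.

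The principal obstacle, as in the noise-free case, is maintaining the bound $\|\widehat\vomega^{(k-1)}\|_0 < \mathrm{spark}(A)$ throughout the iteration so that Theorem \ref{thm:uniqueness_boundedness} remains invocable at every step; this is now free of charge because $\cJ_\mu$ is non-increasing along the iterates and dominates $\|\widehat\vomega\|_0$. The minor additional bookkeeping is verifying that eliminating the Lagrange multiplier $\wvy^k$ from \eqref{opticonoisebcd} indeed produces a system of the form covered by Theorem \ref{thm:uniqueness_boundedness}, which reduces to a direct algebraic substitution.
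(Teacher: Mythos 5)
Your proposal is correct and follows essentially the same route as the paper's proof: monotone descent of $\cJ_\mu$ bounding $\|\widehat\vomega^{(k-1)}\|_0$ below $\operatorname{spark}(A)$, elimination of the multiplier to obtain the system $\bigl(A^\top A + \tfrac{2}{\alpha\mu^2}\mathrm{diag}(\mathbf{1}-\widehat\vomega^{(k-1)})\bigr)\wvx^k = A^\top\vb$, invocation of Theorem \ref{thm:uniqueness_boundedness} with $\lambda = 2/(\alpha\mu^2)$ for uniqueness and boundedness, and finite termination via the binary structure of $\widehat\vomega^k$ combined with the optimality characterization of Theorem \ref{noise_optcond}. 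The only cosmetic difference is the order in which you establish finite termination and verify optimality of the limit point, which does not change the substance of the argument.
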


\begin{proof}
From \eqref{phi0bcmobj}, we obtain the following descent property:
\[
\cJ_\mu(\wvx^{k},\widehat{\vomega}^{k}) \leq \cJ_\mu(\wvx^{k},\widehat{\vomega}^{k-1}) \leq \cJ_\mu(\vx^{k-1},\widehat{\vomega}^{k-1})\leq\cdots\leq\cJ_\mu(\wvx^0,\widehat{\vomega}^0).
\]
At the $k$-th iteration, the optimality condition \eqref{opticonoisebcd} for $\wvx^k$ implies: 
\begin{equation}\label{conviterk}
    \alpha A^\top(A\wvx^k-\vb)+\tfrac{2}{\mu^2}(\mathbf{1}-\widehat{\vomega}^{k-1})\circ\wvx^k=\mathbf{0}.
\end{equation}
Since $\|\widehat{\vomega}^{k-1}\|_0\leq\cJ_\mu(\wvx^{k-1},\widehat{\vomega}^{k-1}) < \operatorname{spark}(A)$, 
the diagonal entries of $D:=\mathrm{diag}(\mathbf1-\widehat\vomega^{k-1})$ contain fewer than $\operatorname{spark}(A)$ zeros.
Setting $\lambda=\tfrac{2}{\alpha\mu^2}$ and  $c=A^\top \vb$,
Theorem \ref{thm:uniqueness_boundedness} guarantees that $\wvx^{k}$ is uniquely determined and bounded for all $k$.  
Therefore, there exists a convergent subsequence such that $\lim_{j\to\infty}(\wvx^{k_j},\widehat{\vomega}^{k_j})=(\wvx,\widehat{\vomega}).$ 
Passing to the limit in \eqref{conviterk}, we obtain
\[
\alpha A^\top(A\wvx-\vb)+\tfrac{2}{\mu^2}(\mathbf{1}-\widehat{\vomega})\circ\wvx=\mathbf{0}.
\]
According to Theorem \ref{noise_optcond}, the limit point $(\wvx,\widehat{\vomega})$ is a minimizer of model \eqref{obj_singleu}.
Similar to Theorem \ref{covgergalg1}, there exists a constant $K>0$ such that $(\wvx^k,\widehat{\vomega}^k)=(\wvx,\widehat{\vomega})$ for all $k>K$.
Thus, the entire sequence $\{(\wvx^k,\widehat{\vomega}^k)\}_k$ converges to $(\wvx,\widehat{\vomega})$ in a finite number of steps.
\end{proof}

\subsection{Continuation strategy  for $\mu$}\label{gradualmu}

In numerically solving the noise-free model \eqref{BCDconstrain_prob}, it is essential to ensure the invertibility of the matrices $A_{\cI^j_1}^\top A_{\cI^j_1}$ and $A_{\cI^j_2}A_{\cI^j_2}^\top$ (see \eqref{updateforx_I1}), which requires $\mathrm{card}(\cI^j_1) \leq m$. 
Similarly, for the noisy model \eqref{thb_regularization}, the matrix $A^\top A + \tfrac{2}{\alpha\mu^2} \text{diag}(\mathbf{1} - \widehat{\vomega}^{j})$ must remain invertible during iterations. 

%\tr{To ensure the numerical stability of our algorithm, maintaining the invertibility of key matrices throughout the iterations is crucial. For the noise-free model \eqref{BCDconstrain_prob}, this requires the matrices $A_{\mathcal{I}^j_1}^\top A_{\mathcal{I}^j_1}$ and $A_{\mathcal{I}^j_2} A_{\mathcal{I}^j_2}^\top$ (see \eqref{updateforx_I1}) to be invertible, which is guaranteed if $\mathrm{card}(\mathcal{I}^j_1) \leq m$. Similarly, for the noisy model \eqref{thb_regularization}, the matrix $A^\top A + \tfrac{2}{\alpha\mu^2} \operatorname{diag}(\mathbf{1} - \widehat{\vomega}^{j})$ must remain invertible.
%}

To achieve this, we employ a continuation strategy starting with a large initial parameter $\mu_0$, gradually reducing $\mu_\ell$ toward the target value $\mu$ specified in \eqref{BCDconstrain_prob} or \eqref{thb_regularization}. 
%\textcolor{red}{
%The initial $\mu_0$ is chosen based on the initial guess $\wvx^{0}$ such that the surrogate function $\mathcal{Q}_{\mu_0}(\wvx^{0}, \widehat{\vomega}^{0}) $ satisfies $\mathcal{Q}_{\mu_0}(\wvx^{0}, \widehat{\vomega}^{0}) < m+1$. 
%This condition, combined with the assumption in Theorem~2.7 that any $(n-1)$-column submatrix of $A$ is full rank (which implies $\operatorname{spark}(A) = m+1$), ensures that the submatrix $A_{\mathcal{I}_1^{0}}$ (where $\mathcal{I}_1^{0} = \{i : \widehat{\vomega}_i^{0} = 1\}$) is full column rank. This guarantees the required invertibility and ensures numerical stability throughout the iterative process.
%%This ensures that the matrix $A_{\mathcal{I}_1^{0}}$ (with $\mathcal{I}_1^{0}=\{i : \widehat{\vomega}_i^{0} = 1\}$) has full column rank, thereby guaranteeing the invertibility of $A_{\cI^j_1}^\top A_{\cI^j_1}$ and $A_{\cI^j_2}A_{\cI^j_2}^\top$ and maintaining algorithmic stability throughout the iterative process.
%}

The initial $\mu_0$ is selected based on the initial guess $\wvx^{0}$ to ensure 
that the submatrix $A_{\mathcal{I}_1^{0}}$ (where $\mathcal{I}_1^{0} = \{i : \widehat{\vomega}_i^{0} = 1\}$) has at most $m$ columns. Under the assumption in Theorem~2.7 that any $(n-1)$-column submatrix of $A$ is full rank, any submatrix with at most $m$ columns is full column rank. This guarantees the required invertibility and ensures numerical stability throughout the iterative process.

Next, we derive the update scheme for $\mu_{\ell+1}$. Specifically, given the initial value $(\wvx_{\ell}, \widehat{\vomega}_{\ell}, \mu_{\ell+1})$ in the $(\ell+1)$-th iteration, 
we aim to ensure that
$\cQ_{\mu_{\ell+1}}(\wvx_{\ell}, \widehat{\vomega}_{\ell}) < m+1$ (or $\cQ_{\mu_{\ell+1}}(\wvx_{\ell}, \widehat{\vomega}_{\ell}) \leq\cJ_{\mu_{\ell+1}}(\wvx_{\ell}, \widehat{\vomega}_{\ell}) < m+1$). 
%\textcolor{red}{We note that while the exact computation of $\operatorname{spark}(A)$ is known to be NP-hard, the value $m+1$ serves as a proven upper bound for it, which is sufficient for our theoretical analysis.}
According to Lemma \ref{mudecre_objincre} and Lemma \ref{upperbound_lema2_rev}, we have
\[
    \cQ_{\mu_{\ell+1}}(\wvx_{\ell}, \widehat{\vomega}_{\ell})\geq\Phi_{\mu_{\ell+1}}(\wvx_{\ell})\geq\Phi_{c_{\ell+1}}(\wvx_{\ell})=\cQ_{c_{\ell+1}}(\wvx_{\ell},\mathcal{H}_{c_{\ell+1}}(\wvx_{\ell})),
\]
where $\mu_{\ell+1}\leq c_{\ell+1} = \tfrac{\mu_{\ell}}{\rho}$ for some constant $\rho > 1$. 
The right-hand side can be expanded as
\[
\cQ_{c_{\ell+1}}(\wvx_{\ell}, \mathcal{H}_{c_{\ell+1}}(\wvx_{\ell})) =\frac{1}{c_{\ell+1}^2} \sum_{i \in \mathcal{K}} (\wvx_{\ell})_i^2 + n - \mathrm{card}(\mathcal{K}),
\]
with $\mathcal{K} = \{i \mid \abs{(\wvx_{\ell})_i}\leq c_{\ell+1} \}$.
To ensure $\cQ_{c_{\ell+1}}(\cdot) < m+1$, we require
\[
c_{\ell+1} > \sqrt{ \frac{ \sum_{i \in \mathcal{K}} (\wvx_{\ell})_i^2 }{ m - n + \mathrm{card}(\mathcal{K}) + 1 } }.
\]
To guarantee numerical stability and avoid over-updating, we clip this update between a lower bound $c_{\ell+1}$ and an upper bound $\mu_{\ell}$. The final update scheme becomes:
\begin{equation}\label{updatemu}
    \mu_{\ell+1} = \min\left\{ \max\left\{ \tfrac{\mu_{\ell}}{\rho}, \sqrt{ \frac{ \sum_{i \in \mathcal{K}} (\wvx_{\ell})_i^2 }{ m - n + \mathrm{card}(\mathcal{K}) + 1 } } \right\}, \mu_{\ell} \right\}.
\end{equation}

In Algorithm \ref{continuescheme}, we summarize the continuation procedure as an outer loop of the proposed Algorithm \ref{BCD}. 

\begin{algorithm}
\caption{Continuation strategy for BCD-TH penalized sparse signal recovery}
\label{continuescheme}
\renewcommand{\algorithmicrequire}{\textbf{Input:}}
\renewcommand{\algorithmicensure}{\textbf{Output:}}
\begin{algorithmic}[1]
\REQUIRE $A$, $\vb$, $\mu_0$, $\alpha$, $Max\mbox{-}epochs$
\ENSURE $(\wvx,\widehat{\vomega})$   %%output
\STATE  Initialize $(\wvx ^0, \widehat\vomega^0)$ such that $\cQ_{\mu_0}(\wvx^0,\widehat\vomega^0)$ (or $\cJ_{\mu_0}(\wvx^{0},\widehat{\vomega}^{0})$) satisfies $\cQ_{\mu_0}(\wvx^0,\widehat\vomega^0)<\operatorname{spark}(A)$, and set  $\ell=0$
\WHILE{$\ell\leq Max\mbox{-}epochs$}
\STATE Compute $(\wvx_{\ell}, \widehat{\vomega}_{\ell})$ by applying steps 2–7 of Algorithm \ref{BCD}, using $(\wvx_{\ell-1}, \widehat{\vomega}_{\ell-1}, \mu_{\ell})$ as the initialization
\STATE Update $\mu_{\ell+1}$ by \eqref{updatemu}
\STATE $\ell=\ell+1$
\ENDWHILE
\RETURN $(\wvx\leftarrow\wvx_{\ell},\widehat{\vomega}\leftarrow\widehat{\vomega}_{\ell})$
\end{algorithmic}
\end{algorithm}

\section{Numerical experiments}\label{sec_experiments}
We now illustrate the performance of our proposed method ($TH$) for signal recovery problem. 
We compare the $TH$ method with the following  state-of-the-art sparse recovery approaches: 
$L_1$-minimization \cite{glowinski1975approximation}, $TL_1$ regularization \cite{zhang2018minimization}, $L_1\mbox{-}L_2$ composite regularization \cite{yin2015minimization},  IHT for $L_0$-constraints \cite{blumensath2009iterative}, $MCP$ optimization \cite{sun2018sparse}, IRLS implementation of $L_p$-minimization \cite{lai2013improved}, {ADMM for $L_1/L_2$ minimization \cite{tao2022minimization}, and DCA-based sorted $L_1/L_2$ \cite{wang2024sorted}.}
All simulations were implemented in MATLAB R2019b and executed on a MacBook Pro (1.4 GHz Intel Core i5 processor, 16 GB RAM) under macOS Big Sur (version 11.7).

The initialization process for all algorithms employs the $L_1$-norm minimization formulation:
$\vx^0 \in \arg\min_{A\vx=\vb} \|\vx\|_1$,
which is solved numerically using the Gurobi optimization package \cite{gurobi}. 
Parameter configurations for the benchmark methods are set as follows: the $TL_1$ method employed a parameter {$\beta=1$}, while the $L_p$-norm used   $p=0.5$. 
For the $TL_1$, $L_1\mbox{-}L_2$, and $MCP$ regularizers, we fix the regularization parameter at $\lambda=10^{-6}$ and implement the DCA for numerical optimization. 
The $L_0$-based approach adopts parameter settings from \cite{blumensath2009iterative} to maintain methodological consistency. 
For the {$L_p$, $L_1/L_2$, sorted $L_1/L_2$, and} $TH$ methods, the model parameter was manually selected to achieve the best numerical performance across experiments.
The iterative procedures for all methods terminate when either of the following criteria is satisfied:
\begin{equation}\label{stopcondition}
    \frac{\|\wvx^{j+1}-\wvx^{j}\|}{\|\wvx^{j}\|} < 10^{-8} \quad \text{or} \quad j > 5n,
\end{equation}
where $n$ denotes the signal dimension. Reconstruction performance was quantified through the relative recovery error (RRE):
\begin{equation}
    \text{RRE}(\wvx,\bar\vx)=\frac{\|\wvx - \overline{\vx}\|_2}{\|\overline{\vx}\|_2},
\end{equation}
with $\bar\vx$ representing the ground truth signal. 
The true signal $\overline{\vx}$ is an s-sparse vector supported on a random index set, with non-zero entries drawn from a Gaussian distribution, {except in subsection~\ref{exp_decay}, where they follow an exponential decay.} The measurement vector is obtained as $\vb=A\overline{\vx}$ for noise-free case and $\vb = A\overline{\vx} + \vn$ with the white Gaussian noise $\vn \sim \mathcal{N}(\mathbf{0}, \sigma^2 \mathbf{I}_m)$ for noise case.

We consider two types of sensing matrices:
\begin{enumerate}[1). ,leftmargin=*]
\item Gaussian matrix $A_1 \in \mathbb{R}^{m \times n}$, generated from normal distribution $\mathcal{N}(\mathbf{0}, \Sigma)$, where the covariance matrix is given by
    $\Sigma = \{(1 - r)\operatorname{I}_n(i=j) + r\}_{i,j}$
    with a positive parameter $r$. A larger $r$ indicates a more challenging recovery problem \cite{zhang2018minimization}.

    \item Oversampled partial discrete cosine (DCT) matrix $A_2 \in \mathbb{R}^{m \times n}$, formed as $A_2=[\va_1,\ldots,\va_n]$, with columns $\va_i$ constructed as follows
    \[
    \va_i = \frac{1}{\sqrt{m}} \cos\Bigl(\frac{2i\pi \gamma}{F}\Bigr),\quad i=1,\ldots,n,
    \]
    where $\gamma\in[0,1]^m$ is a random vector and $F>0$ is the refinement factor. A larger $F$ results in higher coherence and a more ill-conditioned matrix.
\end{enumerate}

\subsection{Convergence behavior under continuation strategy}

{In this subsection, we examine the convergence properties of the continuation method proposed in Section~\ref{gradualmu}. 
This approach employs an iterative strategy where the parameter $\mu$ is sequentially reduced through successive epochs. 
At each epoch, Algorithm~\ref{BCD} solves the optimization subproblem for the current $\mu_\ell$ before advancing to the next epoch. 
}

{Experiments are evaluated on both the Gaussian matrix $A_1 \in \mathbb{R}^{64 \times 512}$ ($r = 0.8, s = 12$) and the oversampled DCT matrix $A_2 \in \mathbb{R}^{64 \times 512}$ ($F = 5, s = 12$). The non-zero entries of ground truth $\bar{\vx}\in\rr^{512}$ are sampled from a standard Gaussian distribution.  Figure~\ref{muvsobjconstrained} shows the convergence behavior for both matrix types $A_1$ and $A_2$.  The left plots show the evolution of the regularization parameter $\mu$ and the objective function value versus the number of iterations, while the right plots show the evolution of RRE. 
}

{
We observe a consistent behavior in both matrix types:
\begin{enumerate}[1). ,leftmargin=*]
    \item For each fixed $\mu_{\ell}$ (corresponding to iterations 1-4, 5-7, 8-10, 11-13, and 14-15 for $A_1$; iterations 1-3, 4-6, 7-9, 10-14, 15-17, 18-19, 20-21, and 22-23 for $A_2$), the objective function value decreases monotonically or stabilizes, confirming finite step convergence.
    \item When $\mu$ is decreased (corresponding to iterations 4–5, 7–8, 10–11, and 13–14 for $A_1$; iterations 3-4, 6-7, 9-10, 14-15, 17-18, 19-20, and 21-22 for $A_2$), the objective function either increases (as observed in iterations 4–5, 7–8, and 10–11 for $A_1$; iterations 3-4, 6-7, 9-10, and  14-15 for $A_2$ ) or remains unchanged (as seen in iterations 13–14 for $A_1$, iterations 17-18, 19-20, and 21-22 for $A_2$).
    \item The RRE for both matrices drops sharply to machine precision ($\mathcal{O}(10^{-15})$) and then stabilizes. This occurs after a few epochs of reducing $\mu$, demonstrating that the continuation strategy effectively guides the iteration toward an accurate solution.
\end{enumerate}
}
{
These empirical findings are in agreement with Lemma~\ref{mudecre_objincre}, which states that the objective function value does not decrease as $\mu$ is reduced, and with Theorem~\ref{covgergalg1}, which guarantees finite-step convergence for a fixed $\mu$.
The consistent behavior across different types of sensing matrices ($A_1$ and $A_2$) validates the practical efficiency and robustness of the proposed continuation strategy.}

% The left of Figure~\ref{muvsobjconstrained} shows the evolution of both the regularization parameter $\mu$ and objective function values versus the number of iterations. 
% The right of Figure~\ref{muvsobjconstrained} shows the evolution of RRE versus the number of iterations. 
% We observe the following:
% \begin{enumerate}[1). ,leftmargin=*]
%     \item For each fixed $\mu_{\ell}$ (corresponding to iterations 1–4, 5–7, 8–10, 11–13, and 14–15), the objective function decreases monotonically or stabilizes, confirming finite step convergence.
%     \item When $\mu$ is decreased (corresponding to iterations 4–5, 7–8, 10–11, and 13–14), the objective function either increases (as observed in iterations 4–5, 7–8, and 10–11) or remains unchanged (as seen in iterations 13–14).
%     \item At the fourth epoch of $\mu_{\ell}$ (iterations 11-13), the RRE drops to $\mathcal{O}(10^{-15})$ and then stabilizes, despite further $\mu$ reductions (e.g., iterations 14-15).
% \end{enumerate}
% These empirical findings are in agreement with Lemma~\ref{mudecre_objincre}, which states that the objective function does not decrease as $\mu$ is reduced, and with Theorem~\ref{covgergalg1}, which guarantees finite-step convergence for a fixed $\mu$.
% Extensive experiments across various sensing matrices (e.g., $A_1$ and $A_2$) and a wide range of sparsity levels reveal consistent behavior. Overall, these results validate the practical efficacy and robustness of the continuation strategy across different experimental settings.

\begin{figure}
    \centering
    \setlength{\tabcolsep}{0.75em}
\begin{tabular}{cc}
\includegraphics[width=0.45\linewidth]{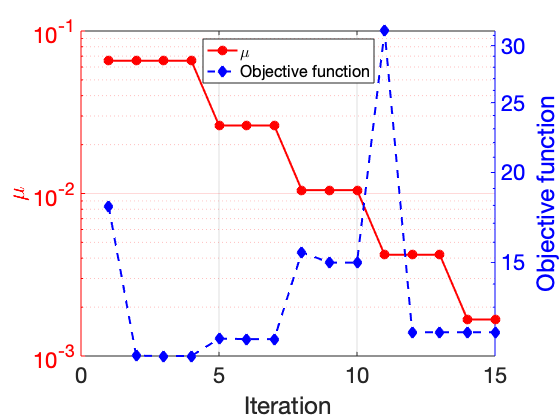} &
\includegraphics[width=0.45\linewidth]{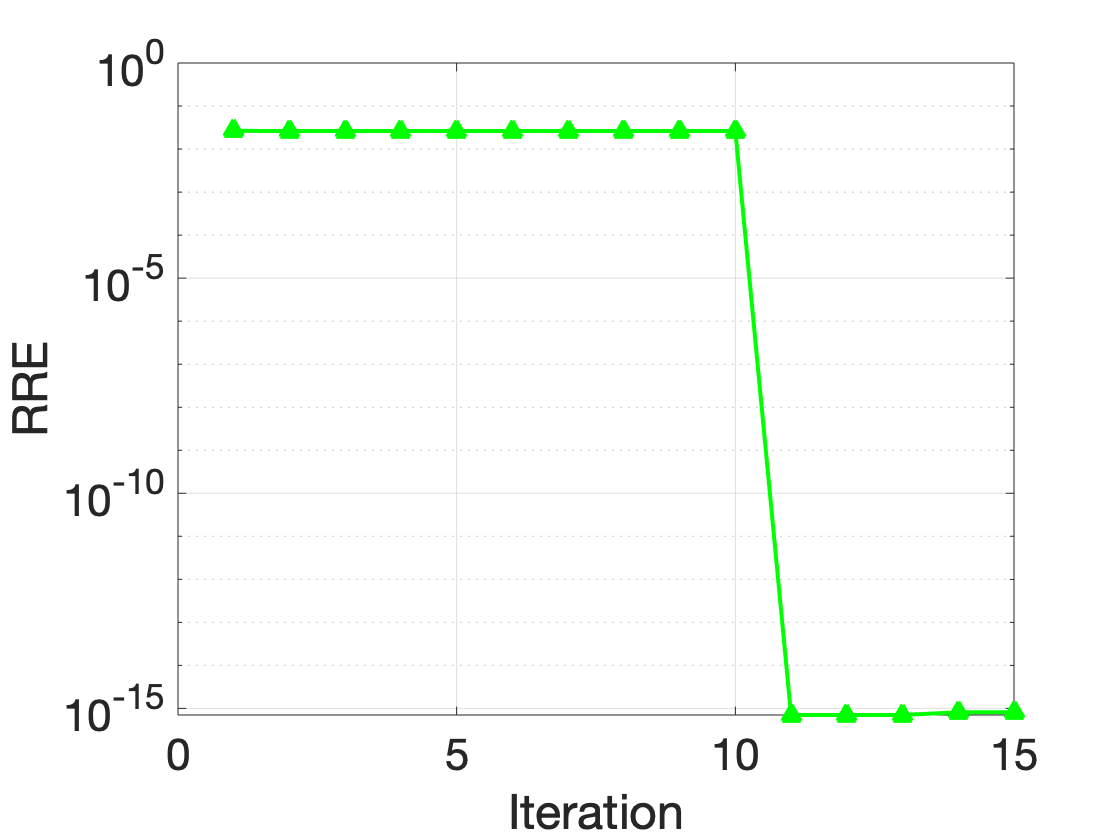} \\
\includegraphics[width=0.45\linewidth]{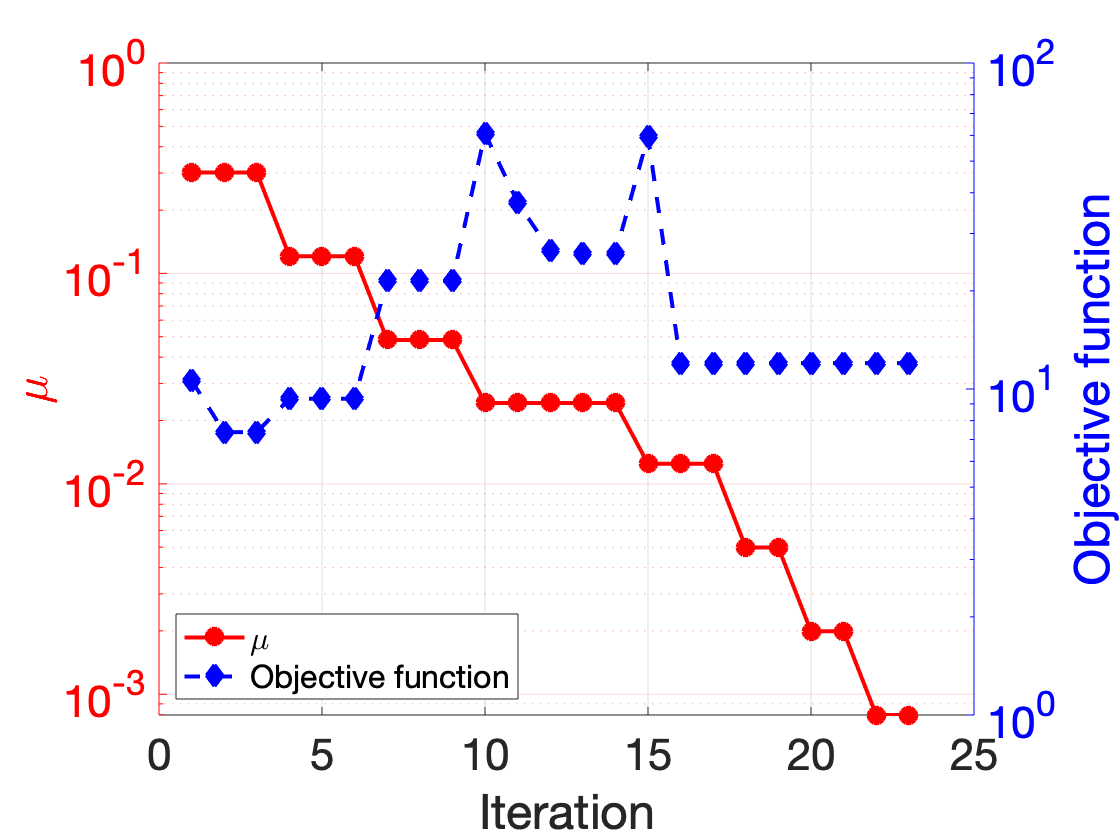} &
\includegraphics[width=0.45\linewidth]{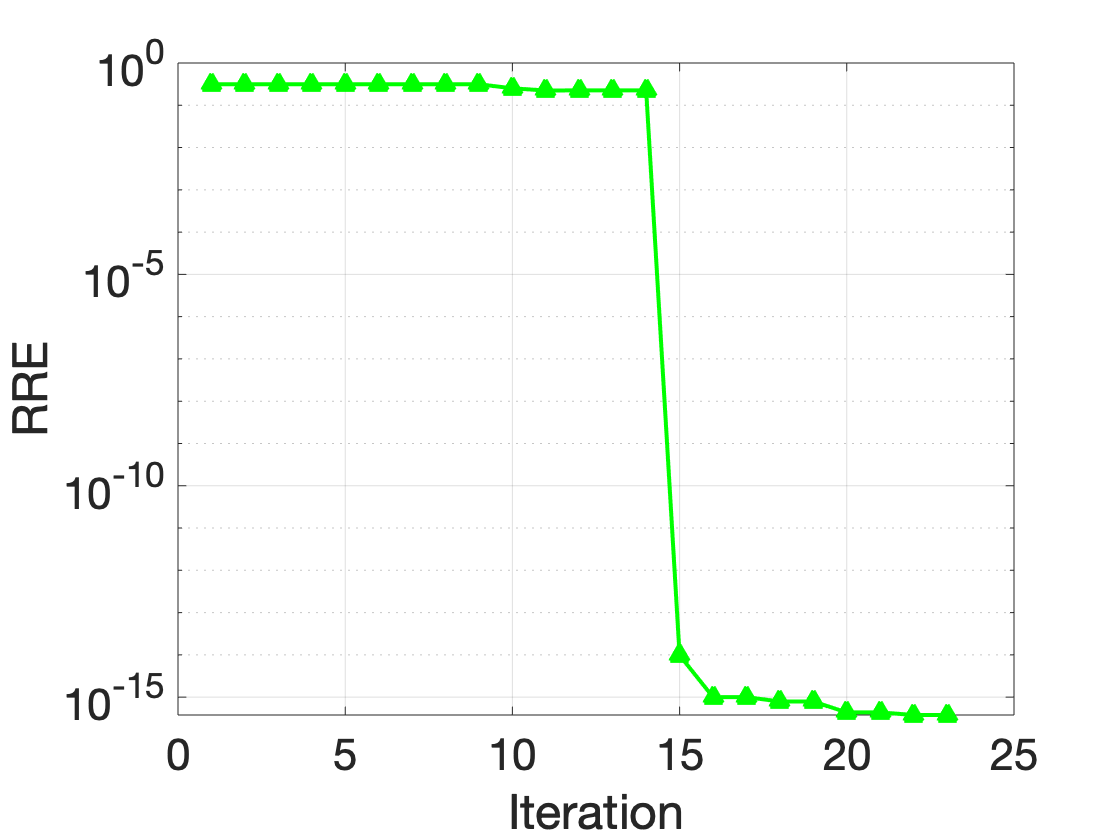} \\
\end{tabular} 
% \caption{Left: the evolution of both the parameter $\mu$ and objective function values versus the number of iterations in the matrix  $A_1\in\rr^{64\times512}$ ($r=0.8,s=12$). Right: the evolution of RRE versus the number of iterations.}
\caption{{Convergence behavior of the proposed method with the continuation strategy. \textbf{Top row:} results for the Gaussian matrix $A_1$. \textbf{Bottom row:} results for the oversampled DCT matrix $A_2$. \textbf{Left plots:} evolution of the parameter $\mu$ and the objective function value versus iterations. \textbf{Right plots:} evolution of the RRE versus iterations.}}
\label{muvsobjconstrained}
\end{figure}

\subsection{{Evaluation of exponential decay sparse signal recovery} }
\label{exp_decay}

{
We assess the capability of various methods to accurately discriminate significant non-zero entries from a broad dynamic range and exponential decay signal, including $L_1$ \cite{glowinski1975approximation}, $TL_1$ \cite{zhang2018minimization}, $L_0$ \cite{blumensath2009iterative}, $MCP$ \cite{sun2018sparse}, $L_p$ \cite{lai2013improved}, $L_1\mbox{-}L_2$ \cite{yin2015minimization}, $L_1/L_2$ \cite{tao2022minimization}, and the proposed $TH$. The reconstruction experiments are conducted in a noise-free setting using two different sensing matrices: $A_1 \in \mathbb{R}^{64\times 1024}$ with $r=0.5$, and $A_2 \in \mathbb{R}^{64\times 1024}$ with $F=5$. The ground truth signal $\bar{\vx} \in \mathbb{R}^{1024}$ is generated by
$\bar{x}_i = \frac{\sqrt{200}}{1+\exp\left(\frac{i-100}{20}\right)},\quad i=1:15:200$,
with the remaining entries set to zero. To eliminate any potential structural bias, the positions of the non-zero components are randomized.  The performance of various comparative methods is evaluated based on RRE, runtime, and iteration counts. %A more detailed investigation into the methods' performance under noisy conditions will be presented in Section \ref{snrsubsection}.
}

{
As shown in Figure~\ref{decreasegt}, the results for the Gaussian matrix ($A_1$) are in the top panel, and for the DCT matrix ($A_2$) in the bottom panel. All absolute values are plotted on a logarithmic scale to enhance visual clarity (avoid ambiguities caused by the clustering of positive and negative fluctuations around zero).
}
{
The $TH$ method is the only approach that perfectly reconstructs the true signal in both test cases. For the Gaussian matrix, $TH$ achieves a remarkable RRE of $9.64\times10^{-16}$, and for the DCT matrix, an RRE of $6.64\times10^{-16}$. These error rates are at least $10^9$ times lower than those of the competing methods. Furthermore, the $TH$ method demonstrates exceptional computational efficiency, converging in just a few iterations (25 and 34, respectively). 
The $L_1/L_2$ and $L_1\mbox{-}L_2$ yield the next best RREs, with 5-10 times longer run times than that of the $TH$ method. Other methods like the $TL_1$ and the $MCP$, tend to introduce spurious non-zero entries that are not present in the ground truth, thus failing to recover the correct sparsity pattern. The $L_0$ method performs poorly in both scenarios, yielding high RREs of $9.14\times10^{-1}$ and $7.79\times10^{-1}$.
The true signal's non-zero entries span a wide dynamic range from $10^{-2}$ to $10^{2}$. Methods like $L_1$ and $L_p$ are only capable of recovering the large-magnitude components within the range of $10^{0}$ to $10^{2}$. The $L_1$ penalty, in particular, introduces a significant number of small-amplitude artifacts, while the $L_p$ method fails to produce a sufficiently sparse solution. This highlights the unique advantage of the $TH$ penalty in accurately recovering sparse signals with complex, decaying structures.
}

\begin{figure}
    \centering
    \begin{tabular}{c}
    \includegraphics[width=0.96\textwidth]{./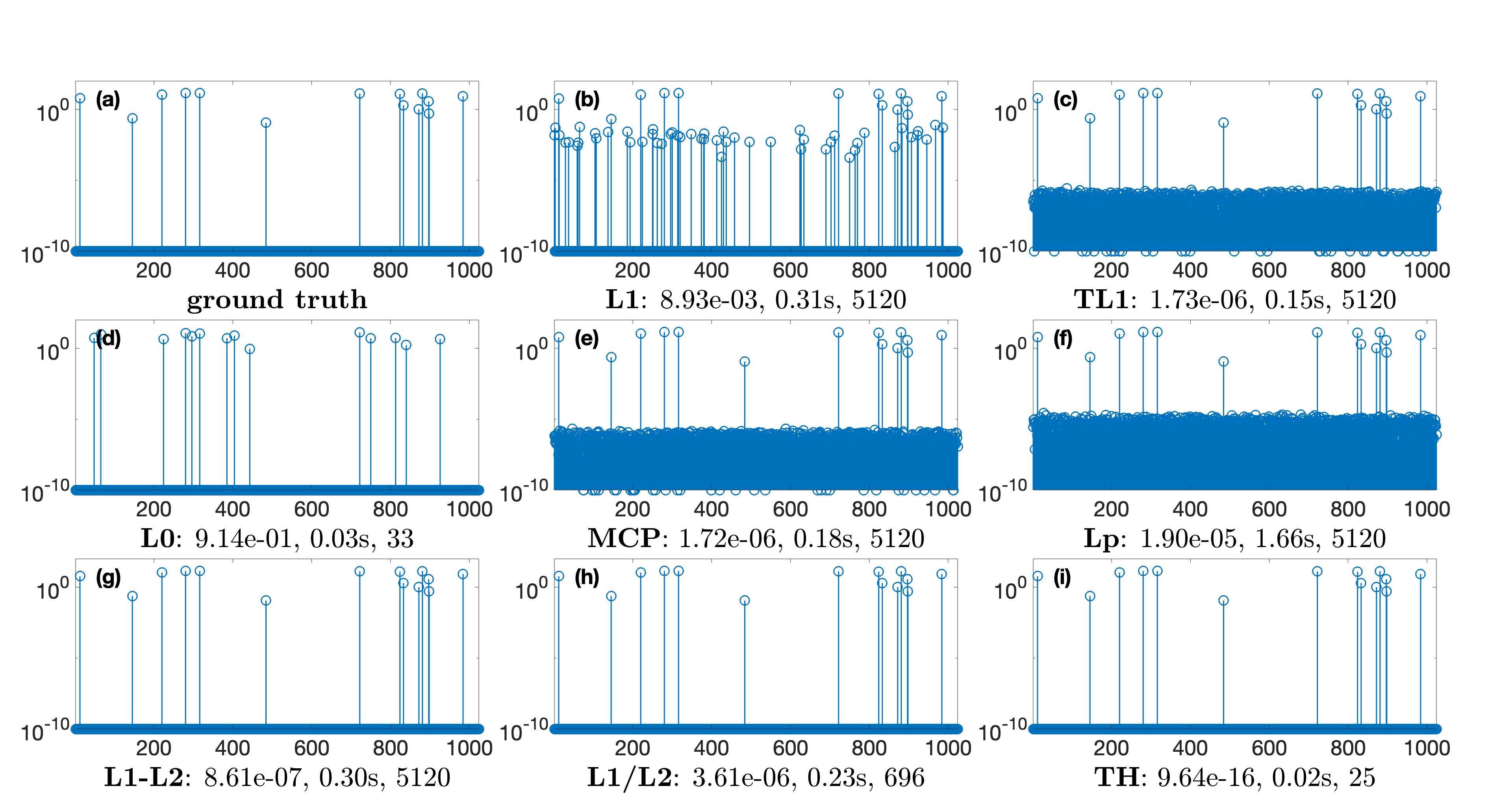} 
          \\[-0.8em]         
        \includegraphics[width=0.96\textwidth]{./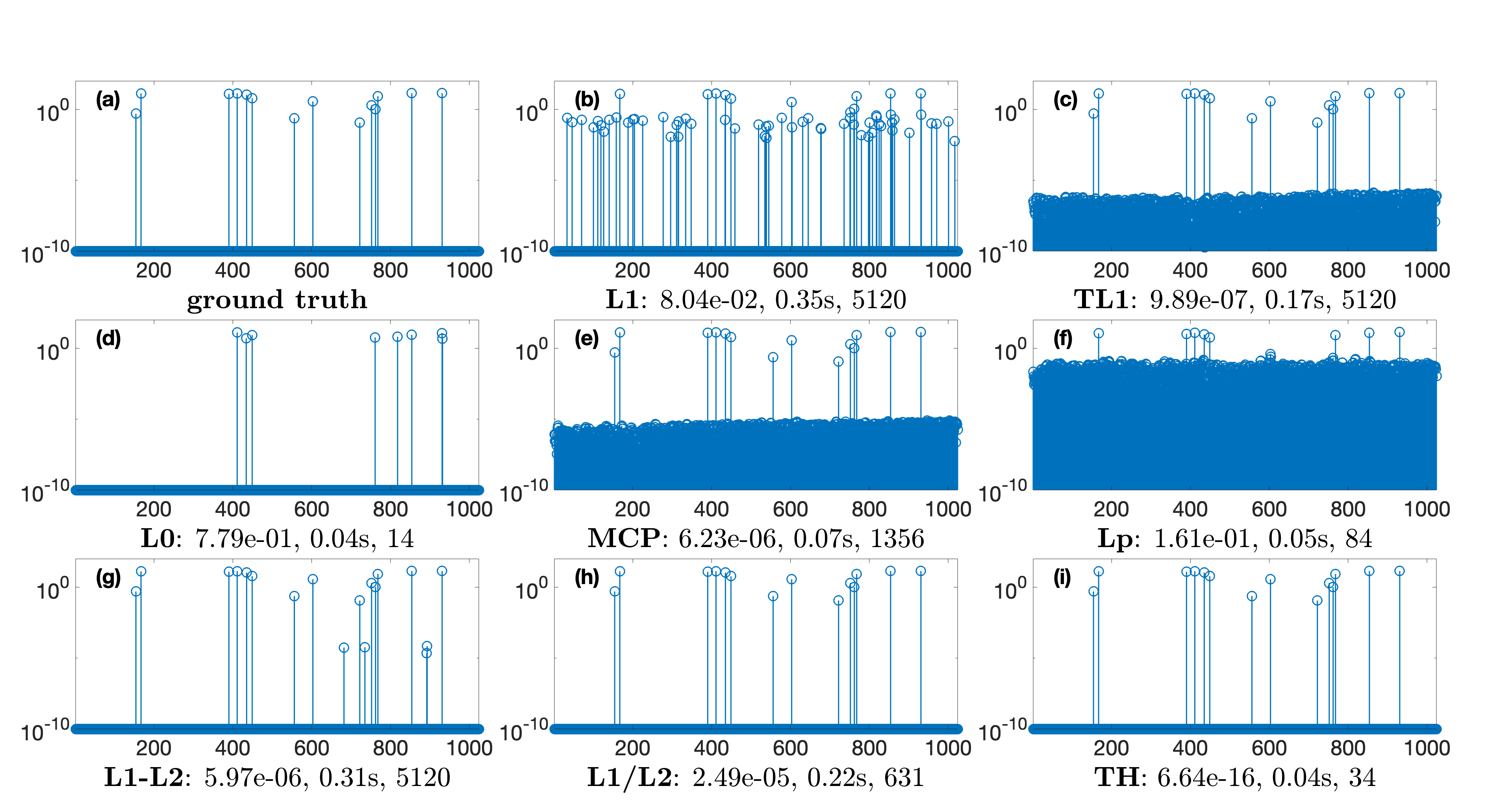}
   \end{tabular}    
   \caption{
   {Noise-free reconstruction results for a decaying signal. \textbf{Top panel}: Using a Gaussian sensing matrix $A_1 \in \mathbb{R}^{64\times 1024}$. \textbf{Bottom panel}: Using a oversampled DCT sensing matrix $A_2 \in \mathbb{R}^{64\times 1024}$. In each subplot title, the values represent the RRE, runtime in seconds, and iteration counts, respectively.}
 }
   \label{decreasegt}
\end{figure}

\subsection{Exact recovery in the noise-free setting}
For noise-free experiments, we test on two matrices: $A_1 \in \mathbb{R}^{64 \times 512}$ with $r = 0.8$, and $A_2 \in \mathbb{R}^{64 \times 2048}$ with $F = 10$.
The sparsity level $s$ varies from 2 to 30 in increments of 2.

% 读取 SR 数据 (Top Row - New Data)
\pgfplotstableread{
K L1 TL1 L0 MCP Lp L1-L2 L1/L2 TH
2  1.00 1.00 0.12 1.00 1.00 1.00 1.00 1.00
4  1.00 1.00 0.08 1.00 1.00 1.00 1.00 1.00
6  1.00 1.00 0.06 1.00 1.00 1.00 1.00 1.00
8  1.00 1.00 0.02 1.00 1.00 1.00 1.00 1.00
10 0.94 1.00 0.00 1.00 1.00 1.00 1.00 1.00
12 0.78 1.00 0.00 0.98 0.98 0.98 1.00 1.00
14 0.34 0.98 0.00 0.96 0.96 0.92 0.96 0.98
16 0.14 0.94 0.00 0.94 0.94 0.70 0.84 0.92
18 0.00 0.86 0.00 0.70 0.90 0.26 0.64 0.84
20 0.00 0.58 0.00 0.38 0.64 0.12 0.34 0.54
22 0.00 0.42 0.00 0.22 0.56 0.02 0.22 0.44
24 0.00 0.22 0.00 0.02 0.32 0.00 0.08 0.20
26 0.00 0.08 0.00 0.00 0.12 0.00 0.00 0.12
28 0.00 0.02 0.00 0.00 0.06 0.00 0.00 0.06
30 0.00 0.00 0.00 0.00 0.00 0.00 0.00 0.02
}\dataSRnew

% 读取 MF 数据 (Top Row - New Data)
\pgfplotstableread{
K L1 TL1 L0 MCP Lp L1-L2 L1/L2 TH
2  0.00 0.00 0.00 0.00 0.00 0.00 0.00 0.00
4  0.00 0.00 0.00 0.00 0.00 0.00 0.00 0.00
6  0.00 0.00 0.00 0.00 0.00 0.00 0.00 0.00
8  0.00 0.00 0.00 0.00 0.00 0.00 0.00 0.00
10 0.06 0.00 0.00 0.00 0.00 0.00 0.00 0.00
12 0.22 0.00 0.00 0.00 0.02 0.00 0.00 0.00
14 0.66 0.00 0.00 0.00 0.04 0.08 0.00 0.02
16 0.86 0.00 0.00 0.00 0.06 0.20 0.00 0.08
18 1.00 0.00 0.00 0.00 0.10 0.66 0.00 0.16
20 1.00 0.00 0.00 0.00 0.32 0.84 0.00 0.46
22 1.00 0.04 0.00 0.00 0.44 0.98 0.00 0.56
24 1.00 0.06 0.00 0.00 0.60 1.00 0.00 0.80
26 1.00 0.16 0.00 0.00 0.70 1.00 0.00 0.88
28 1.00 0.22 0.00 0.00 0.82 1.00 0.00 0.94
30 1.00 0.34 0.00 0.00 0.88 1.00 0.00 0.98
}\dataMFnew

% 读取 AF 数据 (Top Row - New Data)
\pgfplotstableread{
K L1 TL1 L0 MCP Lp L1-L2 L1/L2 TH
2  0.00 0.00 0.88 0.00 0.00 0.00 0.00 0.00
4  0.00 0.00 0.92 0.00 0.00 0.00 0.00 0.00
6  0.00 0.00 0.94 0.00 0.00 0.00 0.00 0.00
8  0.00 0.00 0.98 0.00 0.00 0.00 0.00 0.00
10 0.00 0.00 1.00 0.00 0.00 0.00 0.00 0.00
12 0.00 0.00 1.00 0.02 0.00 0.02 0.00 0.00
14 0.00 0.02 1.00 0.04 0.00 0.00 0.04 0.00
16 0.00 0.06 1.00 0.06 0.00 0.10 0.16 0.00
18 0.00 0.14 1.00 0.30 0.00 0.08 0.36 0.00
20 0.00 0.42 1.00 0.62 0.04 0.04 0.66 0.00
22 0.00 0.54 1.00 0.78 0.00 0.00 0.78 0.00
24 0.00 0.72 1.00 0.98 0.08 0.00 0.92 0.00
26 0.00 0.76 1.00 1.00 0.18 0.00 1.00 0.00
28 0.00 0.76 1.00 1.00 0.12 0.00 1.00 0.00
30 0.00 0.66 1.00 1.00 0.12 0.00 1.00 0.00
}\dataAFnew

% 读取 SR 数据 (Bottom Row - Original Data + L1/L2=0)

\pgfplotstableread{

K L1 TL1 L0 MCP Lp L1-L2 L1/L2 TH

2 1.0000 1.0000 0.8200 1.0000 0.9800 1.0000 1.0000 1.0000

4 1.0000 1.0000 0.2600 1.0000 0.8400 1.0000 1.0000 1.0000

6 1.0000 1.0000 0.1000 1.0000 0.6400 1.0000 1.0000 1.0000

8 0.9800 1.0000 0.0800 1.0000 0.3800 1.0000 1.0000 1.0000

10 0.6400 0.8400 0.0000 0.9600 0.1400 0.9600 0.8200 0.8200

12 0.2600 0.6800 0.0000 0.7000 0.0400 0.7200 0.5800 0.5600

14 0.0800 0.4000 0.0000 0.4200 0.0000 0.4000 0.3400 0.3000

16 0.0000 0.0400 0.0000 0.0800 0.0200 0.0600 0.0400 0.0800

18 0.0000 0.0000 0.0000 0.0000 0.0000 0.0000 0.0000 0.0200

20 0.0000 0.0200 0.0000 0.0000 0.0000 0.0000 0.0000 0.0200

22 0.0000 0.0000 0.0000 0.0000 0.0000 0.0000 0.0000 0.0200

24 0.0000 0.0000 0.0000 0.0000 0.0000 0.0000 0.0000 0.0000

26 0.0000 0.0000 0.0000 0.0000 0.0000 0.0000 0.0000 0.0000

28 0.0000 0.0000 0.0000 0.0000 0.0000 0.0000 0.0000 0.0000

30 0.0000 0.0000 0.0000 0.0000 0.0000 0.0000 0.0000 0.0000

}\dataSR

\pgfplotstableread{

K L1 TL1 L0 MCP Lp L1-L2 L1/L2 TH

2 0.0000 0.0000 0.0000 0.0000 0.0000 0.0000 0.0000 0.0000

4 0.0000 0.0000 0.0000 0.0000 0.0000 0.0000 0.0000 0.0000

6 0.0000 0.0000 0.0000 0.0000 0.0000 0.0000 0.0000 0.0000

8 0.0200 0.0000 0.0000 0.0000 0.0000 0.0000 0.0000 0.0000

10 0.3600 0.0000 0.0000 0.0000 0.0000 0.0000 0.0000 0.1800

12 0.7400 0.0000 0.0000 0.0000 0.0000 0.0600 0.0200 0.4400

14 0.9200 0.0200 0.0000 0.0000 0.0000 0.2200 0.0000 0.7000

16 1.0000 0.0800 0.0000 0.0000 0.0200 0.6200 0.0000 0.9200

18 1.0000 0.1400 0.0000 0.0000 0.0000 0.7800 0.0000 0.9800

20 1.0000 0.3400 0.0000 0.0000 0.0000 0.9600 0.0000 0.9800

22 1.0000 0.5000 0.0000 0.0000 0.0800 1.0000 0.0000 0.9800

24 1.0000 0.8800 0.0000 0.0000 0.0800 1.0000 0.0200 1.0000

26 1.0000 0.9000 0.0000 0.0000 0.3000 1.0000 0.0000 1.0000

28 1.0000 1.0000 0.0000 0.0000 0.5200 1.0000 0.0000 1.0000

30 1.0000 1.0000 0.0000 0.0000 0.8000 1.0000 0.0000 1.0000

}\dataMF

\pgfplotstableread{

K L1 TL1 L0 MCP Lp L1-L2 L1/L2 TH

2 0.0000 0.0000 0.1800 0.0000 0.0200 0.0000 0.0000 0.0000

4 0.0000 0.0000 0.7400 0.0000 0.1600 0.0000 0.0000 0.0000

6 0.0000 0.0000 0.9000 0.0000 0.3600 0.0000 0.0000 0.0000

8 0.0000 0.0000 0.9200 0.0000 0.6200 0.0000 0.0000 0.0000

10 0.0000 0.1600 1.0000 0.0400 0.8600 0.0400 0.1800 0.0000

12 0.0000 0.3200 1.0000 0.3000 0.9600 0.2200 0.4000 0.0000

14 0.0000 0.5800 1.0000 0.5800 1.0000 0.3800 0.6600 0.0000

16 0.0000 0.8800 1.0000 0.9200 0.9600 0.3200 0.9600 0.0000

18 0.0000 0.8600 1.0000 1.0000 1.0000 0.2200 1.0000 0.0000

20 0.0000 0.6400 1.0000 1.0000 1.0000 0.0400 1.0000 0.0000

22 0.0000 0.5000 1.0000 1.0000 0.9200 0.0000 1.0000 0.0000

24 0.0000 0.1200 1.0000 1.0000 0.9200 0.0000 0.9800 0.0000

26 0.0000 0.1000 1.0000 1.0000 0.7000 0.0000 1.0000 0.0000

28 0.0000 0.0000 1.0000 1.0000 0.4800 0.0000 1.0000 0.0000

30 0.0000 0.0000 1.0000 1.0000 0.2000 0.0000 1.0000 0.0000

}\dataAF

\begin{figure}
\label{noiseless}
\centering
\begin{tikzpicture}
\begin{groupplot}[
group style={group size=3 by 2,horizontal sep=1cm,vertical sep=1.2cm,},
width=4.8cm,height=4.25cm,
xlabel style={font=\fontsize{6}{6}\selectfont},
ylabel style={font=\fontsize{6}{6}\selectfont},
xticklabel style={font=\fontsize{6}{6}\selectfont},
yticklabel style={font=\fontsize{6}{6}\selectfont},
grid=none,
minor tick num=1,
legend style={draw=none,fill=none,text depth=0pt,text height=0.8ex},
legend columns=-1,
]
% --- Top Row Plots (New Data) ---
\nextgroupplot[xlabel={Sparsity},xmin=2, xmax=30, xtick={2,9,16,23,30},] % Left Top (SRnew)
\addplot[blue, mark=*, mark options={fill=blue}, mark repeat=3] table[x=K,y=L1] {\dataSRnew};
\addplot[red, mark=diamond*, mark options={fill=red}, mark repeat=3] table[x=K,y=TL1] {\dataSRnew};
\addplot[magenta, mark=+, mark repeat=3] table[x=K,y=L0] {\dataSRnew};
\addplot[cyan, dashed, mark=triangle*, mark options={fill=cyan, rotate=90}, mark repeat=3] table[x=K,y=MCP] {\dataSRnew};
\addplot[magenta, dashed, mark=square*, mark options={fill=magenta}, mark repeat=3] table[x=K,y=Lp] {\dataSRnew};
\addplot[green, mark=x, mark repeat=3] table[x=K,y=L1-L2] {\dataSRnew};
\addplot[orange, mark=pentagon*, mark options={fill=orange}, mark repeat=3] table[x=K,y=L1/L2] {\dataSRnew};
\addplot[black, mark=triangle*, mark options={fill=black}, mark repeat=3] table[x=K,y=TH] {\dataSRnew};

\nextgroupplot[xlabel={Sparsity}, xmin=2, xmax=30, xtick={2,9,16,23,30},] % Middle Top (MFnew)
\addplot[blue, mark=*, mark options={fill=blue}, mark repeat=3] table[x=K,y=L1] {\dataMFnew};
\addplot[red, mark=diamond*, mark options={fill=red}, mark repeat=3] table[x=K,y=TL1] {\dataMFnew};
\addplot[magenta, mark=+, mark repeat=3] table[x=K,y=L0] {\dataMFnew};
\addplot[cyan, dashed, mark=triangle*, mark options={fill=cyan, rotate=90}, mark repeat=3] table[x=K,y=MCP] {\dataMFnew};
\addplot[magenta, dashed, mark=square*, mark options={fill=magenta}, mark repeat=3] table[x=K,y=Lp] {\dataMFnew};
\addplot[green, mark=x, mark repeat=3] table[x=K,y=L1-L2] {\dataMFnew};
\addplot[orange, mark=pentagon*, mark options={fill=orange}, mark repeat=3] table[x=K,y=L1/L2] {\dataMFnew};
\addplot[black, mark=triangle*, mark options={fill=black}, mark repeat=3] table[x=K,y=TH] {\dataMFnew};

\nextgroupplot[xlabel={Sparsity}, xmin=2, xmax=30, xtick={2,9,16,23,30}, ] % Right Top (AFnew)
\addplot[blue, mark=*, mark options={fill=blue}, mark repeat=3] table[x=K,y=L1] {\dataAFnew};
\addplot[red, mark=diamond*, mark options={fill=red}, mark repeat=3] table[x=K,y=TL1] {\dataAFnew};
\addplot[magenta, mark=+, mark repeat=2] table[x=K,y=L0] {\dataAFnew};
\addplot[cyan, dashed, mark=triangle*, mark options={fill=cyan, rotate=90}, mark repeat=3] table[x=K,y=MCP] {\dataAFnew};
\addplot[magenta, dashed, mark=square*, mark options={fill=magenta}, mark repeat=3] table[x=K,y=Lp] {\dataAFnew};
\addplot[green, mark=x, mark repeat=3] table[x=K,y=L1-L2] {\dataAFnew};
\addplot[orange, mark=pentagon*, mark options={fill=orange}, mark repeat=3] table[x=K,y=L1/L2] {\dataAFnew};
\addplot[black, mark=triangle*, mark options={fill=black}, mark repeat=4] table[x=K,y=TH] {\dataAFnew};

% --- Bottom Row Plots (Original Data + L1/L2=0) ---
\nextgroupplot[xlabel={Sparsity}, xmin=2, xmax=30, xtick={2,9,16,23,30}, ] % Left Bottom (SR)
\addplot[blue, mark=*, mark options={fill=blue}, mark repeat=3] table[x=K,y=L1] {\dataSR};
\addplot[red, mark=diamond*, mark options={fill=red}, mark repeat=3] table[x=K,y=TL1] {\dataSR};
\addplot[magenta, mark=+, mark repeat=3] table[x=K,y=L0] {\dataSR};
\addplot[cyan, dashed, mark=triangle*, mark options={fill=cyan, rotate=90}, mark repeat=3] table[x=K,y=MCP] {\dataSR};
\addplot[magenta, dashed, mark=square*, mark options={fill=magenta}, mark repeat=3] table[x=K,y=Lp] {\dataSR};
\addplot[green, mark=x, mark repeat=3] table[x=K,y=L1-L2] {\dataSR};
\addplot[orange, mark=pentagon*, mark options={fill=orange}, mark repeat=3] table[x=K,y=L1/L2] {\dataSR};
\addplot[black, mark=triangle*, mark options={fill=black}, mark repeat=3] table[x=K,y=TH] {\dataSR};

\nextgroupplot[xlabel={Sparsity}, xmin=2, xmax=30, xtick={2,9,16,23,30}, ] % Middle Bottom (MF)
\addplot[blue, mark=*, mark options={fill=blue}, mark repeat=3] table[x=K,y=L1] {\dataMF};
\addplot[red, mark=diamond*, mark options={fill=red}, mark repeat=3] table[x=K,y=TL1] {\dataMF};
\addplot[magenta, mark=+, mark repeat=3] table[x=K,y=L0] {\dataMF};
\addplot[cyan, dashed, mark=triangle*, mark options={fill=cyan, rotate=90}, mark repeat=3] table[x=K,y=MCP] {\dataMF};
\addplot[magenta, dashed, mark=square*, mark options={fill=magenta}, mark repeat=3] table[x=K,y=Lp] {\dataMF};
\addplot[green, mark=x, mark repeat=3] table[x=K,y=L1-L2] {\dataMF};
\addplot[orange, mark=pentagon*, mark options={fill=orange}, mark repeat=3] table[x=K,y=L1/L2] {\dataMF};
\addplot[black, mark=triangle*, mark options={fill=black}, mark repeat=3] table[x=K,y=TH] {\dataMF};

\nextgroupplot[xlabel={Sparsity}, xmin=2, xmax=30, xtick={2,9,16,23,30}, ] % Right Bottom (AF)
\addplot[blue, mark=*, mark options={fill=blue}, mark repeat=3] table[x=K,y=L1] {\dataAF};
\addplot[red, mark=diamond*, mark options={fill=red}, mark repeat=3] table[x=K,y=TL1] {\dataAF};
\addplot[magenta, mark=+, mark repeat=3] table[x=K,y=L0] {\dataAF};
\addplot[cyan, dashed, mark=triangle*, mark options={fill=cyan, rotate=90}, mark repeat=3] table[x=K,y=MCP] {\dataAF};
\addplot[magenta, dashed, mark=square*, mark options={fill=magenta}, mark repeat=3] table[x=K,y=Lp] {\dataAF};
\addplot[green, mark=x, mark repeat=3] table[x=K,y=L1-L2] {\dataAF};
\addplot[orange, mark=pentagon*, mark options={fill=orange}, mark repeat=3] table[x=K,y=L1/L2] {\dataAF};
\addplot[black, mark=triangle*, mark options={fill=black}, mark repeat=4] table[x=K,y=TH] {\dataAF};

\end{groupplot}

\node[draw, font=\fontsize{6}{6}\selectfont, inner ysep=0.05cm] at ([yshift=.6cm]group c2r1.north) {
\begin{tikzpicture}
\matrix [column sep=0.12cm, ampersand replacement=\&] {
\node {\tikz{\draw[blue, line width=0.8pt] (0,0) -- (0.4,0); \node[blue] at (0.2,0) {\pgfuseplotmark{*}};}}; \& \node {$L_1$}; \&
\node {\tikz{\draw[red, line width=0.8pt] (0,0) -- (0.4,0); \node[red] at (0.2,0) {\pgfuseplotmark{diamond*}};}}; \& \node {$TL_1$}; \&
\node {\tikz{\draw[magenta, line width=0.8pt] (0,0) -- (0.4,0); \node[magenta] at (0.2,0) {\pgfuseplotmark{+}};}}; \& \node {$L_0$}; \&
\node {\tikz{\draw[cyan, dashed, line width=0.8pt] (0,0) -- (0.4,0); \node[cyan, rotate=90] at (0.15,0) {\pgfuseplotmark{triangle*}};}}; \& \node {$MCP$}; \&
\node {\tikz{\draw[magenta, dashed, line width=0.8pt] (0,0) -- (0.4,0); \node[magenta] at (0.15,0) {\pgfuseplotmark{square*}};}}; \& \node {$L_p$}; \&
\node {\tikz{\draw[green, line width=0.8pt] (0,0) -- (0.4,0); \node[green] at (0.2,0) {\pgfuseplotmark{x}};}}; \& \node {$L_1 - L_2$}; \&
\node {\tikz{\draw[orange, line width=0.8pt] (0,0) -- (0.4,0); \node[orange] at (0.2,0) {\pgfuseplotmark{pentagon*}};}}; \& \node {$L_1/L_2$}; \&
\node {\tikz{\draw[black, line width=0.8pt] (0,0) -- (0.4,0); \node[black] at (0.2,0) {\pgfuseplotmark{triangle*}};}}; \& \node {$TH$}; \\
};
\end{tikzpicture}
};
\end{tikzpicture}
\vspace{-1em}
\caption{{Comparison of sparse reconstruction performance in the noise-free case using matrices $A_1$ (with $r=0.8$; top row) and $A_2$ (with $F=10$; bottom row). Metrics include success rates (\textit{left column}), model failures (\textit{middle column}), and algorithm failures (\textit{right column}), evaluated for the following regularizers: $L_1$, $TL_1$, $L_0$, $MCP$, $L_p$, $L_1\mbox{-}L_2$, $L_1/L_2$, and $TH$.} }
\end{figure}

The recovery performance is quantified by the success rate, defined as the percentage of successful recoveries over 50 independent trials. A trial is successful if the RRE is less than $10^{-3}$.
Failures (RRE $>10^{-3}$) are further classified as:
\begin{enumerate}[1),leftmargin=*]
\item model failures: occur when $\Phi(\overline{\vx}) > \Phi(\wvx)$, which contradicts the assumption of $\overline{\vx}$ being the global minimizer;
\item algorithm failures: occur when $\Phi(\overline{\vx}) < \Phi(\wvx)$, indicating that the algorithm did not converge to a satisfactory minimizer.
\end{enumerate}

Figure~\ref{noiseless} displays the success rates, model failures, and algorithm failures for various models.
For the Gaussian matrix ($A_1$, top panel), while the $L_p$ model achieves the highest success rate, both the proposed $TH$ and the $TL_1$ models exhibit highly competitive performance. {The $MCP$ and $L_1/L_2$ models perform slightly worse success rates, accompanied by higher algorithm failures.} 
For the DCT matrix ($A_2$, bottom panel), although the $L_1\mbox{-}L_2$ and $MCP$ models lead in terms of success rate, the $TH$ method remains consistently robust, ranking closely with the $TL_1$ and $L_1/L_2$ models. Notably, the performance of the $L_p$ model declines significantly for $A_2$, {whereas the $L_1/L_2$ model shows improved performance.} The $TH$ method maintains stable behavior across different matrix types and sparsity levels. For both matrix types, the $L_1$ model consistently underperforms, and the $L_0$ model nearly always fails.

Figure~\ref{noiselesstimetable} reports the average computation times (in seconds) for the experiments depicted in Figure \ref{noiseless}, with each value averaged over 50 independent trials. Despite the fact that the $L_0$ model records the shortest computation times, its reconstructions are almost invariably unsuccessful. 
In contrast, the $TH$ method exhibits the second fastest performance, particularly at low sparsity levels (with computation times nearly an order of magnitude lower than most competitors). 
This computational efficiency, combined with its robust and consistent recovery performance across a variety of experimental settings, underscores the practical value of the $TH$ approach.
{The $L_1/L_2$ and $MCP$ models are the slowest, especially at high sparsity levels for $A_2$.} The computation times for $TH$, as well as for the $L_p$, $L_1\mbox{-}L_2$, $TL_1$, $L_1/L_2$, and $L_0$ models, increase with sparsity due to the higher number of iterations needed to satisfy the stopping criterion \eqref{stopcondition}.

\pgfplotstableread{
Sparsity L1 TL1 L1L2 L0 MCP Lp TH L1divL2
2 0.0423 0.0414 0.0416 0.0017 0.4397 0.0028 0.0045 0.0335
6 0.0457 0.0432 0.0438 0.0017 0.4323 0.0034 0.0036 0.4115
10 0.0499 0.0459 0.0480 0.0024 0.4328 0.0043 0.0037 0.4160
14 0.0523 0.0872 0.0968 0.0028 0.4323 0.0098 0.0056 0.4117
18 0.0519 0.1972 0.2510 0.0035 0.4182 0.0490 0.0109 0.4126
22 0.0511 0.4517 0.3284 0.0042 0.4218 0.1403 0.0179 0.4050
26 0.0516 0.5647 0.3230 0.0055 0.4255 0.2790 0.0220 0.4094
30 0.0524 0.5974 0.3243 0.0060 0.4246 0.3104 0.0234 0.4092
}\AoneData

% Defining data table for A2 with new L1/L2 method
\pgfplotstableread{
Sparsity L1 TL1 L1L2 L0 MCP Lp TH L1divL2
2  0.2047  0.2049  0.2111  0.0021  0.2673  0.0123  0.0108  0.3661
6  0.2304  0.2157  0.2220  0.0069  0.3232  0.0324  0.0024  1.0994
10 0.2663  0.4825  0.3766  0.0117  0.5547  0.0565  0.0179  0.9861
14 0.2982  1.9259  1.7962  0.0166  3.0744  0.1026  0.0471  3.8079
18 0.2924  3.0353  2.5661  0.0247  4.4278  0.1172  0.0581  3.8690
22 0.2930  3.0327  2.6251  0.0313  4.3292  0.1362  0.0583  3.7572
26 0.2960  2.5793  2.4594  0.0303  4.2020  0.1339  0.0572  3.5647
30 0.3242  3.0694  2.8373  0.0500  5.1092  0.1505  0.0667  4.2151
}\AtwoData

% Creating the figure with two subplots and a common legend
\begin{figure}
\centering
\begin{tikzpicture}
\begin{groupplot}[
    group style={
        group size=2 by 1,  % Single row, two columns layout
        horizontal sep=1.6cm,  % Horizontal spacing between subplots
        vertical sep=1cm,
        group name=myplots,   % Naming the group for positioning
    },
    width=6.6cm,
    height=4.5cm,
    xlabel={Sparsity},
    xlabel style={font=\fontsize{6}{6}\selectfont},
    ylabel style={font=\fontsize{6}{6}\selectfont},
    xticklabel style={font=\fontsize{6}{6}\selectfont},
    yticklabel style={font=\fontsize{6}{6}\selectfont},
    ymode=log,
    xmin=2, xmax=30,
    xtick={2,6,10,14,18,22,26,30},
    grid=none,
    minor tick num=1,
]

%% A1 subplot
\nextgroupplot
\addplot[blue, mark=*, mark options={fill=blue}, mark repeat=2] table[x=Sparsity,y=L1] {\AoneData};
\addplot[red, mark=diamond*, mark options={fill=red}, mark repeat=2] table[x=Sparsity,y=TL1] {\AoneData};
\addplot[green, mark=x, mark repeat=2] table[x=Sparsity,y=L1L2] {\AoneData};
\addplot[magenta, mark=+, mark repeat=2] table[x=Sparsity,y=L0] {\AoneData};
\addplot[cyan, dashed, mark=triangle*, mark options={fill=cyan, rotate=90}, mark repeat=2] table[x=Sparsity,y=MCP] {\AoneData};
\addplot[magenta, dashed, mark=square*, mark options={fill=magenta}, mark repeat=2] table[x=Sparsity,y=Lp] {\AoneData};
\addplot[black, mark=triangle*, mark options={fill=black}, mark repeat=2] table[x=Sparsity,y=TH] {\AoneData};
\addplot[orange, mark=pentagon*, mark options={fill=orange}, mark repeat=2] table[x=Sparsity,y=L1divL2] {\AoneData};

%% A2 subplot (no independent legend)
\nextgroupplot
\addplot[blue, mark=*, mark options={fill=blue}, mark repeat=2] table[x=Sparsity,y=L1] {\AtwoData};
\addplot[red, mark=square*, mark repeat=2] table[x=Sparsity,y=TL1] {\AtwoData};
\addplot[green, mark=x, mark repeat=2] table[x=Sparsity,y=L1L2] {\AtwoData};
\addplot[magenta, mark=+, mark repeat=2] table[x=Sparsity,y=L0] {\AtwoData};
\addplot[cyan, dashed, mark=triangle*, mark options={fill=cyan, rotate=90}, mark repeat=2] table[x=Sparsity,y=MCP] {\AtwoData};
\addplot[magenta, dashed, mark=square*, mark options={fill=magenta}, mark repeat=2] table[x=Sparsity,y=Lp] {\AtwoData};
\addplot[orange, mark=pentagon*, mark options={fill=orange}] table[x=Sparsity,y=L1divL2] {\AtwoData};
\addplot[black, mark=triangle*, mark options={fill=black}, mark repeat=2] table[x=Sparsity,y=TH] {\AtwoData};

\end{groupplot}

% Adding a common legend above the subplots
\node[draw, font=\fontsize{6}{6}\selectfont, inner ysep=0.05cm] at ([yshift=0.8cm]group c2r1.north) {
\begin{tikzpicture}
\matrix [column sep=0.05cm, ampersand replacement=\&] {
\node {\tikz{\draw[blue, line width=0.8pt] (0,0) -- (0.4,0); \node[blue] at (0.2,0) {\pgfuseplotmark{*}};}}; \& \node {$L_1$}; \&
\node {\tikz{\draw[red, line width=0.8pt] (0,0) -- (0.4,0); \node[red] at (0.2,0) {\pgfuseplotmark{diamond*}};}}; \& \node {$TL_1$}; \&
\node {\tikz{\draw[magenta, line width=0.8pt] (0,0) -- (0.4,0); \node[magenta] at (0.2,0) {\pgfuseplotmark{+}};}}; \& \node {$L_0$}; \&
\node {\tikz{\draw[cyan, dashed, line width=0.8pt] (0,0) -- (0.4,0); \node[cyan, rotate=90] at (0.15,0) {\pgfuseplotmark{triangle*}};}}; \& \node {$MCP$}; \&
\node {\tikz{\draw[magenta, dashed, line width=0.8pt] (0,0) -- (0.4,0); \node[magenta] at (0.15,0) {\pgfuseplotmark{square*}};}}; \& \node {$L_p$}; \&
\node {\tikz{\draw[green, line width=0.8pt] (0,0) -- (0.4,0); \node[green] at (0.2,0) {\pgfuseplotmark{x}};}}; \& \node {$L_1 - L_2$}; \&
\node {\tikz{\draw[orange, line width=0.8pt] (0,0) -- (0.4,0); \node[orange] at (0.2,0) {\pgfuseplotmark{pentagon*}};}}; \& \node {$L_1/L_2$}; \&
\node {\tikz{\draw[black, line width=0.8pt] (0,0) -- (0.4,0); \node[black] at (0.2,0) {\pgfuseplotmark{triangle*}};}}; \& \node {$TH$}; \\
};
\end{tikzpicture}
};
\end{tikzpicture}
\caption{{Computation time comparison for sparse vector recovery under noise-free conditions using matrices $A_1$ ($r = 0.8$; left) and $A_2$ ($F = 10$; right). Time values (in seconds) are averaged over 50 independent trials.}}
\label{noiselesstimetable}
\end{figure}

\subsection{Robust recovery under noisy observations}\label{snrsubsection}
To assess the robustness and accuracy of various methods in the presence of noise, we perform experiments on both Gaussian and oversampled DCT matrices over a range of signal-to-noise ratio (SNR\footnote{SNR is defined as $SNR(\vb, A\bar\vx) = 10\log_{10}\tfrac{\|\vb - A\bar\vx\|_2^2}{\|A\bar\vx\|_2^2}$}) levels. A higher SNR corresponds to a lower noise level. For each SNR value, the RRE is averaged over 50 independent trials. 

According to Figure~\ref{vary_SNR},
for the Gaussian matrix $A_1 \in \mathbb{R}^{64 \times 512}$ with $r = 0.5$ and sparsity level $s = 12$, the $TH$ method consistently achieves the lowest RRE across SNR levels. The $L_p$ and $TL_1$ models show reduced performance compared to their noise-free scenario, while trends for $MCP$, $L_1/L_2$, $L_1\mbox{-}L_2$, $L_1$, and $L_0$ are consistent with the noise-free case.
For the DCT matrix $A_2 \in \mathbb{R}^{64 \times 1024}$ ($F = 5$, $s = 8$), the $TH$ method consistently outperforms all other methods across various noise levels. 
{
% The $L_1/L_2$ method exhibits commendable performance at lower noise levels, however, its effectiveness diminishes as noise increases. In contrast, the $L_0$ model demonstrates competitive performance at higher noise conditions, yet its effectiveness shows no significant improvement as noise levels decrease.
The $L_p$ method achieves suboptimal performance at low SNR for $A_1$. $L_1/L_2$ delivers moderate reconstruction on $A_1$ but demonstrates superior high-SNR scalability on $A_2$. 
$L_0$ shows consistently poor accuracy on $A_1$. 
Both $TL_1$ and $MCP$ exhibit analogous error progression, with $TL_1$ maintaining consistent advantage over $MCP$ at elevated SNR levels.
} 
A similar trend is observed for the $L_p$ method. Additionally, the performance of the $MCP$, $L_1-L_2$, and $TL_1$ methods degrades compared to those in noiseless scenarios.

% Defining data for the left subplot: 15 data points, SNR from 18 to 60, step 3
\pgfplotstableread{
SNR   L1     TL1    L1L2   L0     MCP    Lp     TH     L1divL2
18    0.1771 0.0609 0.0954 0.5452 0.0699 0.0415 0.0333 0.0726
21    0.1466 0.0426 0.0656 0.5308 0.0485 0.0273 0.0212 0.0486
24    0.1211 0.0300 0.0464 0.5074 0.0346 0.0187 0.0146 0.0335
27    0.1023 0.0213 0.0331 0.4992 0.0245 0.0130 0.0100 0.0243
30    0.0870 0.0151 0.0238 0.5200 0.0174 0.0090 0.0068 0.0168
33    0.0756 0.0106 0.0170 0.4959 0.0124 0.0062 0.0045 0.0119
36    0.0672 0.0075 0.0121 0.4993 0.0088 0.0044 0.0032 0.0085
39    0.0606 0.0053 0.0086 0.4981 0.0062 0.0031 0.0022 0.0060
42    0.0562 0.0038 0.0061 0.5094 0.0044 0.0021 0.0015 0.0042
45    0.0529 0.0027 0.0043 0.5059 0.0031 0.0015 0.0011 0.0030
48    0.0506 0.0019 0.0030 0.5020 0.0022 0.0011 0.0007 0.0021
51    0.0489 0.0013 0.0022 0.4968 0.0016 0.0007 0.0005 0.0015
54    0.0477 0.0009 0.0015 0.4968 0.0011 0.0005 0.0004 0.0011
57    0.0470 0.0007 0.0011 0.4915 0.0008 0.0004 0.0003 0.0007
60    0.0462 0.0005 0.0008 0.5047 0.0006 0.0003 0.0002 0.0005
}\leftData

% Defining data for the right subplot: 15 data points, SNR from 30 to 72, step 3
\pgfplotstableread{
SNR   L1     TL1    L1L2   L0     MCP    Lp     TH     L1divL2
30    1.6593 3.6506 2.1794 0.4138 3.7203 4.0526 0.4268 2.0776
33    1.0230 1.0651 1.0863 0.3856 1.7370 2.8236 0.3586 1.4759
36    0.7943 0.7944 0.7950 0.3478 0.8358 1.8813 0.2669 1.1165
39    0.6192 0.5985 0.5994 0.3084 0.6257 0.4469 0.2157 0.8178
42    0.4783 0.4284 0.4390 0.2744 0.4554 0.3522 0.1879 0.6014
45    0.3573 0.2819 0.2958 0.2721 0.3117 0.2351 0.0799 0.4120
48    0.2679 0.1928 0.2117 0.2926 0.2262 0.1756 0.0472 0.4394
51    0.1985 0.1242 0.1420 0.2798 0.1535 0.1373 0.0302 0.0207
54    0.1456 0.0766 0.0932 0.2592 0.1048 0.0907 0.0180 0.0141
57    0.1047 0.0463 0.0635 0.2667 0.0725 0.0640 0.0169 0.0096
60    0.0709 0.0270 0.0414 0.2652 0.0464 0.0502 0.0038 0.0066
63    0.0487 0.0145 0.0272 0.2744 0.0305 0.0460 0.0022 0.0045
66    0.0326 0.0088 0.0160 0.2741 0.0189 0.0444 0.0014 0.0031
69    0.0202 0.0054 0.0090 0.2739 0.0106 0.0436 0.0011 0.0023
72    0.0126 0.0034 0.0054 0.2831 0.0063 0.0430 0.0007 0.0019
}\rightData
% Creating the figure with two subplots and a common legend
\begin{figure}[htb]
\centering
\begin{tikzpicture}
\begin{groupplot}[
    group style={
        group size=2 by 1,        % Single row, two columns layout
        horizontal sep=1.6cm,     % Matching template spacing
        vertical sep=1cm,
        group name=myplots,       % Naming the group for positioning
    },
    width=6.2cm,                  % Matching template width
    height=4.5cm,                 % Matching template height
    xlabel={SNR (dB)},
    ylabel={RRE},
    xlabel style={font=\fontsize{6}{6}\selectfont},
    ylabel style={font=\fontsize{6}{6}\selectfont},
    xticklabel style={font=\fontsize{6}{6}\selectfont},
    yticklabel style={font=\fontsize{6}{6}\selectfont},
    ymode=log,
    grid=none,
    minor tick num=1,
]

%% Plotting left subplot (Gaussian matrix)
\nextgroupplot[
    xmin=18, xmax=60,
    xtick={18,27,36,45,54},
]
\addplot[blue, mark=*, mark options={fill=blue}, mark repeat=2] table[x=SNR,y=L1] {\leftData};
\addplot[red, mark=diamond*, mark options={fill=red}, mark repeat=2] table[x=SNR,y=TL1] {\leftData};
\addplot[green, mark=x, mark repeat=2] table[x=SNR,y=L1L2] {\leftData};
\addplot[magenta, mark=+, mark repeat=2] table[x=SNR,y=L0] {\leftData};
\addplot[cyan, dashed, mark=triangle*, mark options={fill=cyan, rotate=90}, mark repeat=2] table[x=SNR,y=MCP] {\leftData};
\addplot[magenta, dashed, mark=square*, mark options={fill=magenta}, mark repeat=2] table[x=SNR,y=Lp] {\leftData};
\addplot[black, mark=triangle*, mark options={fill=black}, mark repeat=2] table[x=SNR,y=TH] {\leftData};
\addplot[orange, mark=pentagon*, mark options={fill=orange}, mark repeat=2] table[x=SNR,y=L1divL2] {\leftData};

%% Plotting right subplot (DCT matrix)
\nextgroupplot[
    xmin=30, xmax=72,
    xtick={30,39,48,57,66},
]
\addplot[blue, mark=*, mark options={fill=blue}, mark repeat=2] table[x=SNR,y=L1] {\rightData};
\addplot[red, mark=diamond*, mark options={fill=red}, mark repeat=2] table[x=SNR,y=TL1] {\rightData};
\addplot[green, mark=x, mark repeat=2] table[x=SNR,y=L1L2] {\rightData};
\addplot[magenta, mark=+, mark repeat=2] table[x=SNR,y=L0] {\rightData};
\addplot[cyan, dashed, mark=triangle*, mark options={fill=cyan, rotate=90}, mark repeat=2] table[x=SNR,y=MCP] {\rightData};
\addplot[magenta, dashed, mark=square*, mark options={fill=magenta}, mark repeat=2] table[x=SNR,y=Lp] {\rightData};
\addplot[black, mark=triangle*, mark options={fill=black}, mark repeat=2] table[x=SNR,y=TH] {\rightData};
\addplot[orange, mark=pentagon*, mark options={fill=orange}, mark repeat=2] table[x=SNR,y=L1divL2] {\rightData};

\end{groupplot}

% Adding a common legend above the subplots, matching template style
\node[draw, font=\fontsize{6}{6}\selectfont, inner ysep=0.05cm] at ([xshift=-0.6cm,yshift=0.8cm]group c2r1.north) {
\begin{tikzpicture}
\matrix [column sep=0.05cm, ampersand replacement=\&] {
\node {\tikz{\draw[blue, line width=0.8pt] (0,0) -- (0.4,0); \node[blue] at (0.2,0) {\pgfuseplotmark{*}};}}; \& \node {$L_1$}; \&
\node {\tikz{\draw[red, line width=0.8pt] (0,0) -- (0.4,0); \node[red] at (0.2,0) {\pgfuseplotmark{diamond*}};}}; \& \node {$TL_1$}; \&
\node {\tikz{\draw[magenta, line width=0.8pt] (0,0) -- (0.4,0); \node[magenta] at (0.2,0) {\pgfuseplotmark{+}};}}; \& \node {$L_0$}; \&
\node {\tikz{\draw[cyan, dashed, line width=0.8pt] (0,0) -- (0.4,0); \node[cyan, rotate=90] at (0.2,0) {\pgfuseplotmark{triangle*}};}}; \& \node {$MCP$}; \&
\node {\tikz{\draw[magenta, dashed, line width=0.8pt] (0,0) -- (0.4,0); \node[magenta] at (0.15,0) {\pgfuseplotmark{square*}};}}; \& \node {$L_p$}; \&
\node {\tikz{\draw[green, line width=0.8pt] (0,0) -- (0.4,0); \node[green] at (0.2,0) {\pgfuseplotmark{x}};}}; \& \node {$L_1 - L_2$}; \&
\node {\tikz{\draw[orange, line width=0.8pt] (0,0) -- (0.4,0); \node[orange] at (0.2,0) {\pgfuseplotmark{pentagon*}};}}; \& \node {$L_1/L_2$};\&
\node {\tikz{\draw[black, line width=0.8pt] (0,0) -- (0.4,0); \node[black] at (0.2,0) {\pgfuseplotmark{triangle*}};}}; \& \node {$TH$}; \& \\
};
\end{tikzpicture}
};
\end{tikzpicture}
\caption{{Relative reconstruction error (RRE) versus signal-to-noise ratio (SNR) for sparse recovery using Gaussian (left) and oversampled discrete cosine transform (DCT) sensing matrices (right). Error values represent means over 50 independent trials.}}
\label{vary_SNR}
\end{figure}

\subsection{{Weighted truncated Huber model}}
{
Inspired by the sorted $L_1/L_2$ penalty, a recent state-of-the-art method in sparse signal recovery \cite{wang2024sorted}, we extend our $TH$ model to a weighted version, termed the Weighted Truncated Huber ($WTH$) model. 
% By incorporating a weight vector to assign adaptive penalties based on signal entry magnitudes, $WTH$ promotes sparsity while preserving significant components, outperforming both its unweighted counterpart ($TH$) and the sorted $L_1/L_2$ ($SL_1/L_2$) method. 
This approach introduces a weight vector to assign varying penalties based on signal entry magnitudes, promoting sparsity while preserving significant components. 
The penalty function $\Phi_{\mu}(\vx)$ is modified to include weights $\vtau \in (0,1]^n$:
\[
\Phi_{\mu}(\vx, \vtau) = \sum_{i=1}^n \vtau_i \phi_{\mu}(\vx_i),
\]
where $\phi_{\mu}(\vx_i)$ is the Huber penalty function. The $WTH$ model for noisy sparse signal recovery is formulated as:
\begin{equation}
    {\vx} = \argmin_{\vx} \frac{\alpha}{2} \|A\vx - \vb\|_2^2 + \Phi_{\mu}(\vx, \vtau).
\end{equation}
To facilitate optimization, we define $\vtheta_i = \sqrt{\vtau_i}$ and introduce a surrogate function:
\[
\Phi_{\mu}(\vx, \vtau) = \min_{\vomega} \mathcal{Q}_{\mu,\vtheta}(\vx, \vomega) := \frac{1}{\mu^2} \|(\mathbf{1} - \vomega) \circ \vtheta \circ \vx\|_2^2 + \|\vomega\|_0,
\]
where $\vomega$ is an auxiliary variable promoting sparsity. The weights $\vtheta_i$ assign higher penalties to smaller entries, following \cite{Huang2015,wang2024sorted}:
\[
\vtheta_i =
\begin{cases} 
1, & \text{otherwise}, \\
e^{-r(t-i)/t}, & i \in \Gamma_{\vx,t},
\end{cases}
\]
where $r > 0$ controls the weight function’s curvature, and $\Gamma_{\vx,t}$ is the index set of the $t$ largest-magnitude entries of $\vx$.
}

{
Optimization is performed via  BCD method, alternating updates for $\vomega$ and $\vx$. Given $\vx$, the $\vomega$-subproblem minimizes $\mathcal{Q}_{\mu,\theta}(\vx, \vomega)$ component-wise. For $\vx_i = 0$, the minimum is achieved at ${\vomega}_i = 0$. For $\vx_i \neq 0$, the subproblem is:
\[
\min_{\vomega_i} \frac{ \vtheta_i^2 \vx_i^2}{\mu^2} (1 - \vomega_i)^2 + |\vomega_i|^0.
\]
This yields a hard thresholding rule:
\[
{\vomega}_i  =
\begin{cases} 
0, & |\vx_i| \leq \frac{\mu}{ \vtheta_i}, \\
1, & |\vx_i| > \frac{\mu}{ \vtheta_i},
\end{cases}
\]
corresponding to the thresholding operator $(\mathcal{H}_{\mu /\vtheta_i}(\vx))_i$, as defined in \eqref{porpoptw}. Given ${\vomega}$, the $\vx$-subproblem solves:
\[
\left( \alpha A^\top A +\frac{2}{\mu^2} \operatorname{diag}\big( (\mathbf{1} - {\vomega}) \circ \vtheta^{ 2} \big) \right) \vx = \alpha A^\top \vb.
\]
}

% {
% Optimization is performed via  BCD method, alternating updates for $\vomega$ and $\vx$. Given $\vx^{j}$, the $\vomega$-subproblem minimizes $\mathcal{Q}_{\mu,\theta}(\vx^{j}, \vomega)$ component-wise. For $x_i^{j} = 0$, the minimum is achieved at $\widehat{\vomega}_i^{j+1} = 0$. For $x_i^{j} \neq 0$, the subproblem is:
% \[
% \min_{\omega_i} \frac{ \vtheta_i^2 (x_i^{j})^2}{\mu^2} (1 - \vomega_i)^2 + |\vomega_i|^0.
% \]
% This yields a hard thresholding rule:
% \[
% \widehat{\omega}_i^{j+1} =
% \begin{cases} 
% 0, & |x_i^{j}| \leq \frac{\mu}{ \theta_i}, \\
% 1, & |x_i^{j}| > \frac{\mu}{ \theta_i},
% \end{cases}
% \]
% corresponding to the thresholding operator $\mathcal{H}_{\mu /(\theta_i)}(x_i^{j})$, as defined in \eqref{porpoptw}. Given $\widehat{\vomega}^{j+1}$, the $\vx$-subproblem solves:
% \[
% \left( \alpha A^\top A +\frac{2}{\mu^2} \operatorname{diag}\big( (\mathbf{1} - \widehat{\vomega}^{j+1}) \circ \theta^{ 2} \big) \right) \vx = \alpha A^\top \vb.
% \]
% }

\begin{figure}
\centering
\begin{tikzpicture}
% Defining data for the left subplot (RRE vs. m)
\pgfplotstableread{
m   L1     IRLS   L1mL2  L1vL2  SL1dL2 TH     WTH
30  0.8603 0.9008 0.8388 0.8512 0.9623 0.8603 0.8413
50  0.3864 0.1329 0.1657 0.3759 0.1700 0.2378 0.2843
70  0.0188 0.0028 0.0107 0.0072 0.0073 0.0042 0.0031
90  0.0111 0.0028 0.0084 0.0065 0.0068 0.0031 0.0025
110 0.0087 0.0024 0.0072 0.0063 0.0055 0.0025 0.0023
130 0.0077 0.0022 0.0071 0.0068 0.0055 0.0020 0.0018
150 0.0075 0.0026 0.0070 0.0066 0.0057 0.0018 0.0016
170 0.0078 0.0028 0.0073 0.0070 0.0060 0.0021 0.0018
190 0.0077 0.0032 0.0074 0.0070 0.0052 0.0019 0.0017
210 0.0079 0.0030 0.0076 0.0074 0.0050 0.0015 0.0015
}\leftData

% Defining data for the right subplot (Time vs. m)
\pgfplotstableread{
m   L1     IRLS   L1mL2  L1vL2  SL1dL2 TH     WTH
30  0.0260 0.0348 1.1405 0.5285 4.7798 0.0007 0.0297
50  0.0464 0.0149 0.8719 0.4431 4.5298 0.0104 0.0279
70  0.0775 0.0095 1.3430 0.5293 4.6269 0.0103 0.1102
90  0.1113 0.0117 1.4679 0.5361 4.5936 0.0131 0.0628
110 0.0980 0.0292 1.6360 0.5676 4.4157 0.0156 0.1897
130 0.1347 0.0315 1.8884 0.5891 4.5645 0.0182 0.5042
150 0.1710 0.0195 2.0633 0.6080 4.6027 0.0220 0.1552
170 0.2008 0.0193 2.2098 0.6303 4.6016 0.0256 0.3152
190 0.2227 0.0244 2.4125 0.6451 4.5920 0.0296 0.3630
210 0.2596 0.0285 2.6741 0.6817 4.4951 0.0325 0.3858
}\rightData

\begin{groupplot}[
    group style={
        group size=2 by 1,
        horizontal sep=1.6cm,
        vertical sep=1cm,
        group name=myplots,
    },
    width=6.2cm,
    height=4.5cm,
    xlabel={m},
    xlabel style={font=\fontsize{6}{6}\selectfont},
    xticklabel style={font=\fontsize{6}{6}\selectfont},
    yticklabel style={font=\fontsize{6}{6}\selectfont},
    grid=none,
    minor tick num=1,
]

% Plotting left subplot (RRE vs. m)
\nextgroupplot[
    xmin=30, xmax=210,
    xtick={30,70,110,150,190},
    ymode=log,
    ylabel={RRE},
    ylabel style={font=\fontsize{6}{6}\selectfont},
]
\addplot[blue, mark=*, mark options={fill=blue}, mark repeat=2] table[x=m,y=L1] {\leftData};
% \addplot[red, mark=diamond*, mark options={fill=red}, mark repeat=2] table[x=m,y=IRLS] {\leftData};
\addplot[green, mark=x, mark repeat=2] table[x=m,y=L1mL2] {\leftData};
\addplot[magenta, mark=+, mark repeat=2] table[x=m,y=L1vL2] {\leftData};
\addplot[cyan, dashed, mark=triangle*, mark options={fill=cyan, rotate=90}, mark repeat=2] table[x=m,y=SL1dL2] {\leftData};
\addplot[black, mark=triangle*, mark options={fill=black}, mark repeat=2] table[x=m,y=TH] {\leftData};
\addplot[orange, mark=pentagon*, mark options={fill=orange}, mark repeat=2] table[x=m,y=WTH] {\leftData};

% Plotting right subplot (Time vs. m)
\nextgroupplot[
    xmin=30, xmax=210,
    xtick={30,70,110,150,190},
    ylabel={Time (s)},ymode=log,
    ylabel style={font=\fontsize{6}{6}\selectfont},
]
\addplot[blue, mark=*, mark options={fill=blue}, mark repeat=2] table[x=m,y=L1] {\rightData};
% \addplot[red, mark=diamond*, mark options={fill=red}, mark repeat=2] table[x=m,y=IRLS] {\rightData};
\addplot[green, mark=x, mark repeat=2] table[x=m,y=L1mL2] {\rightData};
\addplot[magenta, mark=+, mark repeat=2] table[x=m,y=L1vL2] {\rightData};
\addplot[cyan, dashed, mark=triangle*, mark options={fill=cyan, rotate=90}, mark repeat=2] table[x=m,y=SL1dL2] {\rightData};
\addplot[black, mark=triangle*, mark options={fill=black}, mark repeat=2] table[x=m,y=TH] {\rightData};
\addplot[orange, mark=pentagon*, mark options={fill=orange}, mark repeat=2] table[x=m,y=WTH] {\rightData};

\end{groupplot}

% Adding a common legend above the subplots
\node[draw, font=\fontsize{6}{6}\selectfont, inner ysep=0.05cm] at ([xshift=-0.6cm,yshift=0.8cm]group c2r1.north) {
\begin{tikzpicture}
\matrix [column sep=0.05cm, ampersand replacement=\&] {
\node {\tikz{\draw[blue, line width=0.8pt] (0,0) -- (0.4,0); \node[blue] at (0.2,0) {\pgfuseplotmark{*}};}}; \& \node {$L_1$}; \&
% \node {\tikz{\draw[red, line width=0.8pt] (0,0) -- (0.4,0); \node[red] at (0.2,0) {\pgfuseplotmark{diamond*}};}}; \& \node {$L_p$}; \&
\node {\tikz{\draw[green, line width=0.8pt] (0,0) -- (0.4,0); \node[green] at (0.2,0) {\pgfuseplotmark{x}};}}; \& \node {$L_1 - L_2$}; \&
\node {\tikz{\draw[magenta, line width=0.8pt] (0,0) -- (0.4,0); \node[magenta] at (0.2,0) {\pgfuseplotmark{+}};}}; \& \node {$L_1 / L_2$}; \&
\node {\tikz{\draw[cyan, dashed, line width=0.8pt] (0,0) -- (0.4,0); \node[cyan, rotate=90] at (0.2,0) {\pgfuseplotmark{triangle*}};}}; \& \node {$S L_1 / L_2$}; \&
\node {\tikz{\draw[black, line width=0.8pt] (0,0) -- (0.4,0); \node[black] at (0.2,0) {\pgfuseplotmark{triangle*}};}}; \& \node {TH}; \&
\node {\tikz{\draw[orange, line width=0.8pt] (0,0) -- (0.4,0); \node[orange] at (0.2,0) {\pgfuseplotmark{pentagon*}};}}; \& \node {WTH}; \& \\
};
\end{tikzpicture}
};
\end{tikzpicture}
\caption{Relative reconstruction error (RRE) and computation time versus matrix row size (m) for sparse recovery using a Gaussian sensing matrix with column size 512 and added Gaussian noise (standard deviation 0.01). RRE and time values represent means over 10 independent trials.}
\label{vary_m}
\end{figure}

{
We evaluate the $WTH$ and $TH$ models on sparse signal recovery tasks, comparing them against state-of-the-art methods: $L_1$ \cite{glowinski1975approximation},   $L_1\mbox{-} L_2$ \cite{yin2015minimization}, $L_1/L_2$ \cite{tao2022minimization}, and sorted $L_1/L_2$ ($SL_1/L_2$) \cite{wang2024sorted}. Experiments use Gaussian sensing matrices $A_1 \in \mathbb{R}^{m \times 512}$, with $m$ varying from 30 to 210 in steps of 20, sparsity $s = 12$, and the observed data is corrupted by additive Gaussian noise with standard deviation $\sigma=0.01$. 
Signals are normalized as in \cite{wang2024sorted}. Performance is measured by RRE, averaged over 10 trials per $m$.
}

{
Figure~\ref{vary_m} shows RRE and computation time versus $m$. 
The proposed $WTH$ and $TH$ models outperform all baselines in 7 out of 10 cases ($m \geq 70$), achieving the lowest RRE values. Specifically, $WTH$ consistently yields the smallest RRE.
% We can observe that both $TH$ and $WTH$ have the best performance in 7 out of 10 cases ($m \geq 70$). 
Both methods maintain stable performance across all cases, with $WTH$ showing slightly better results than $TH$.
In terms of efficiency, $TH$ demonstrates highly competitive computation times across all $m$. 
While $WTH$ incurs a modest computational overhead due to its weighting scheme, it remains significantly faster than $L_1$-$L_2$, $L_1/L_2$ and $SL_1/L_2$. 
These results highlight the superior accuracy and computational efficiency of $WTH$ and $TH$.
% The runtime results demonstrate that our $TH$ method achieves highly competitive efficiency  in all cases, while $WTH$ shows superior speed compared to several baseline methods except $L-1$ method. 
}

\subsection{Truncated Huber penalty in the gradient domain}

In addition, we further evaluate the versatility of the proposed $TH$ method by extending it to the gradient domain for denoising and smoothing tasks. This extension enables us to compare the performance of $TH$ with a range of denoising or smoothing techniques, including classical total variation ($TV$) \cite{rudin1992nonlinear}, $MC\mbox{-}TV$ \cite{selesnick2014convex}, $GME\mbox{-}TV$ \cite{selesnick2020non}, bilateral filtering ($BF$) \cite{tomasi1998bilateral}, weighted least squares filtering ($WLS$) \cite{farbman2008edge}, rolling guidance filter ($RGF$) \cite{zhang2014rolling}, {iterative least squares ($ILS$) \cite{10.1145/3388887}, contrastive semantic-guided image smoothing network ($CSGIS\mbox{-}Net$) \cite{https://doi.org/10.1111/cgf.14681}, pyramid texture filtering ($PTF$) \cite{10.1145/3592120}, edge-preserving smoothing framework ($GSF$) \cite{9490302},  Easy2Hard network ($EH\mbox{-}Net$) \cite{9451539} and guided image filtering ($GF$) \cite{6319316}.} 
{The parameter was chosen to optimize the numerical and visual performance across experiments.}

\begin{figure}[htb]
\centering
\begin{tabular}{ccc}
%----------------- First row -----------------%
\begin{minipage}{0.28\textwidth}
  \centering
  \includegraphics[width=\textwidth]{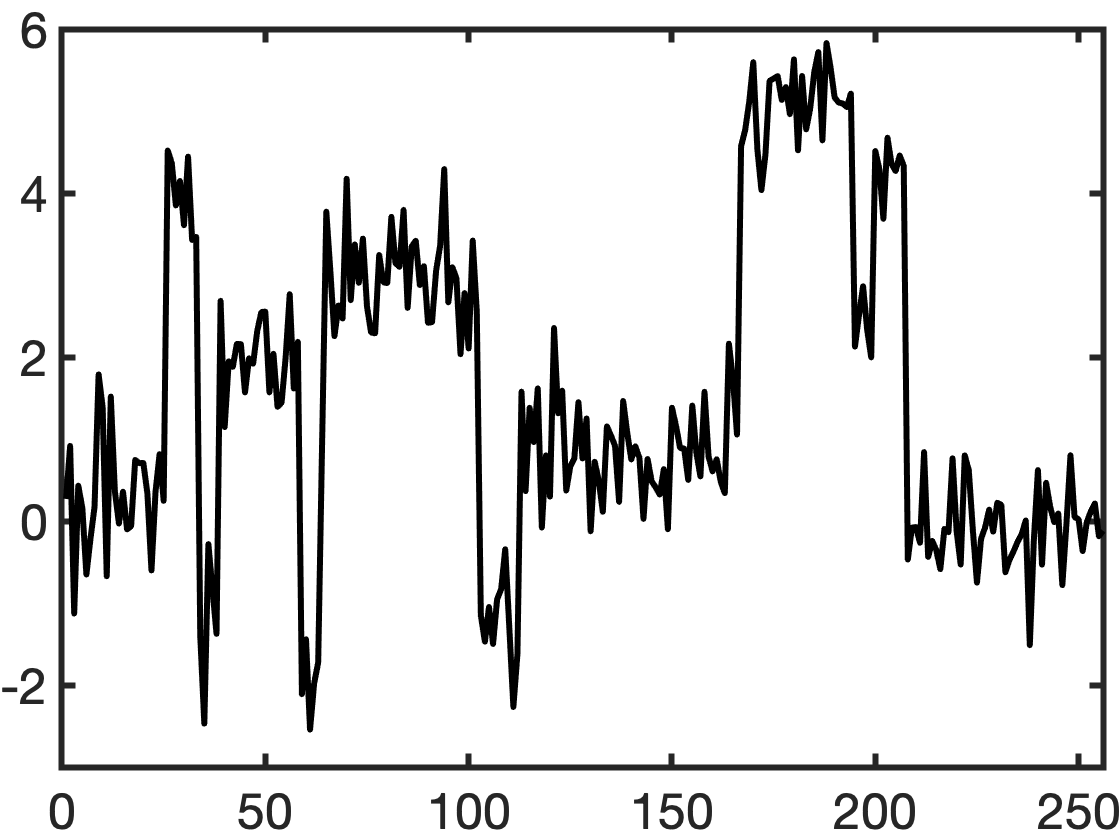}\\
  (a) Noisy signal
\end{minipage}
&
\begin{minipage}{0.28\textwidth}
  \centering
  \includegraphics[width=\textwidth]{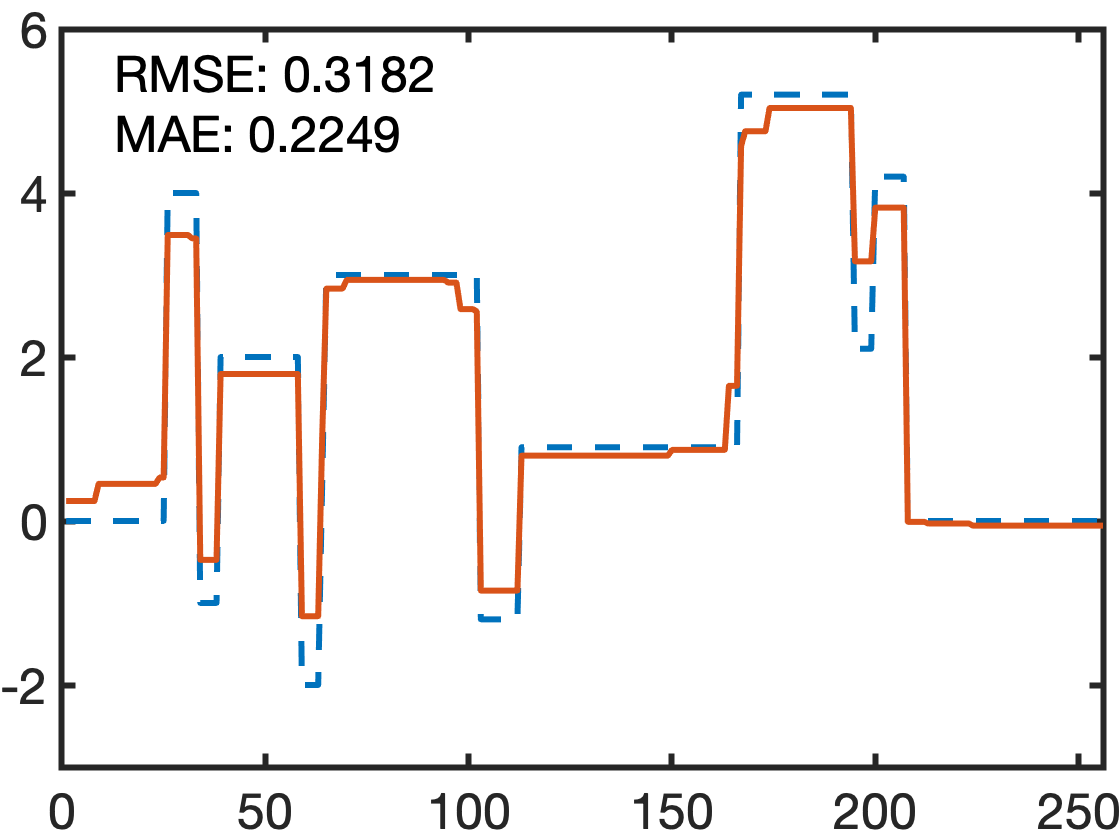}\\
  (b) $TV$
\end{minipage}
&
\begin{minipage}{0.28\textwidth}
  \centering
  \includegraphics[width=\textwidth]{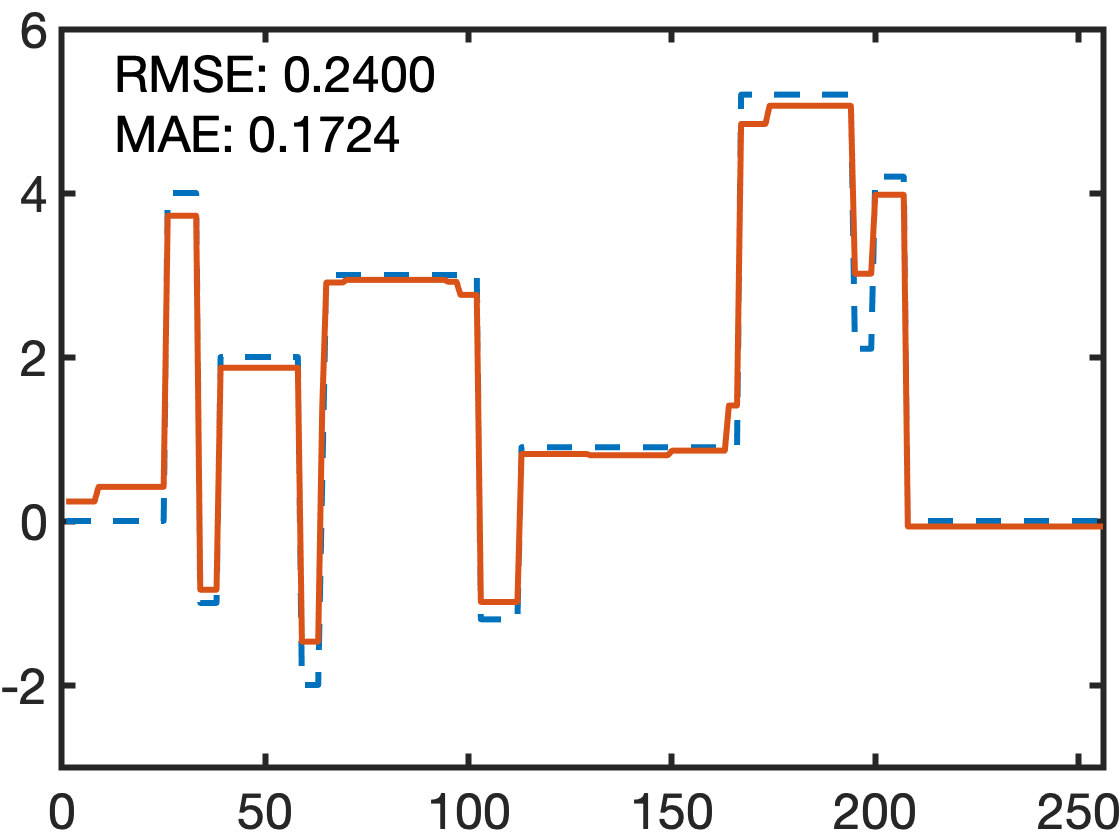}\\
  (c) $MC\mbox{-}TV$
\end{minipage}
\\[6pt]
%----------------- Second row -----------------%
\begin{minipage}{0.28\textwidth}
  \centering
  \includegraphics[width=\textwidth]{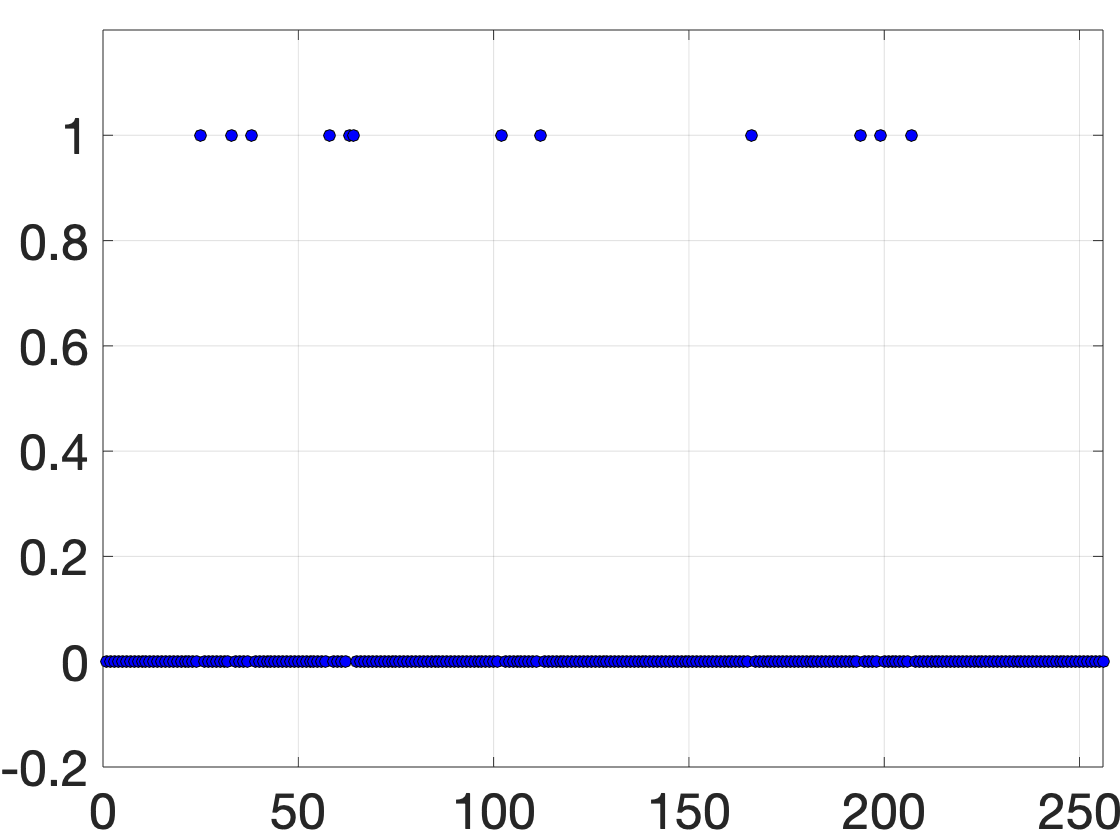}\\
  (d) Recovered $\vomega$
\end{minipage}
&
\begin{minipage}{0.28\textwidth}
  \centering
  \includegraphics[width=\textwidth]{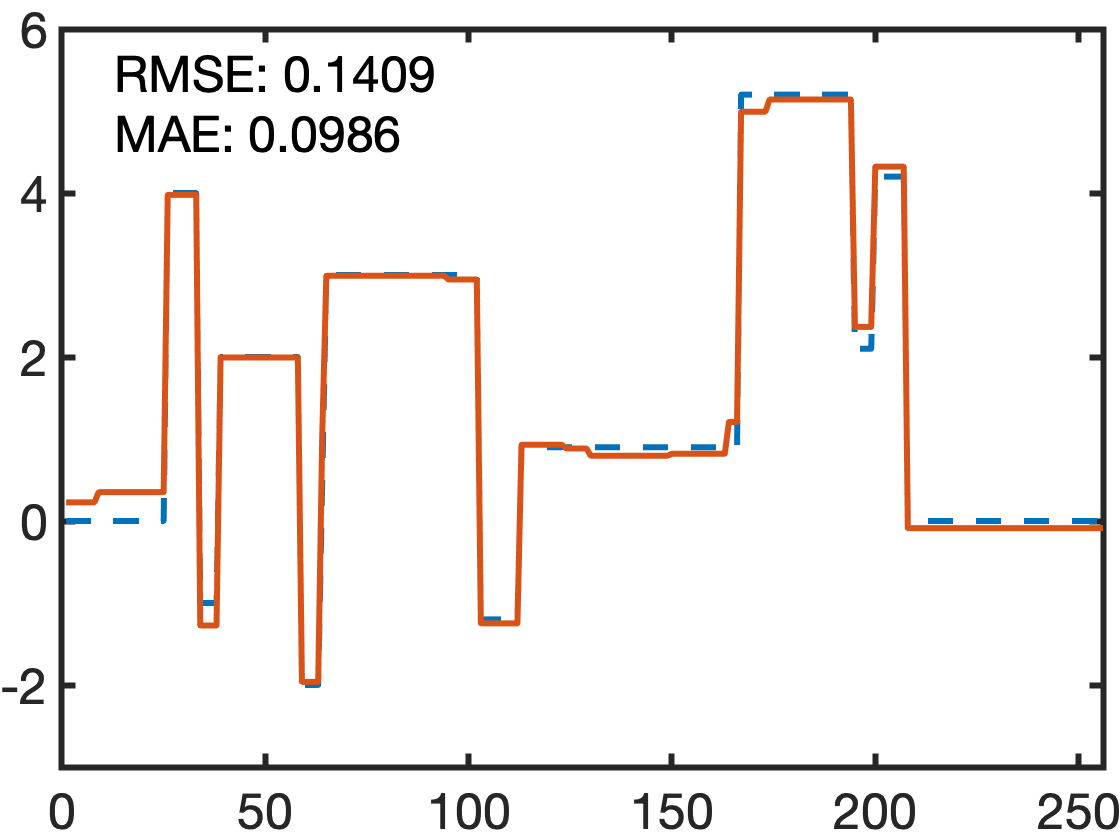}\\
  (e) $GME\mbox{-}TV$
\end{minipage}
&
\begin{minipage}{0.28\textwidth}
  \centering
  \includegraphics[width=\textwidth]{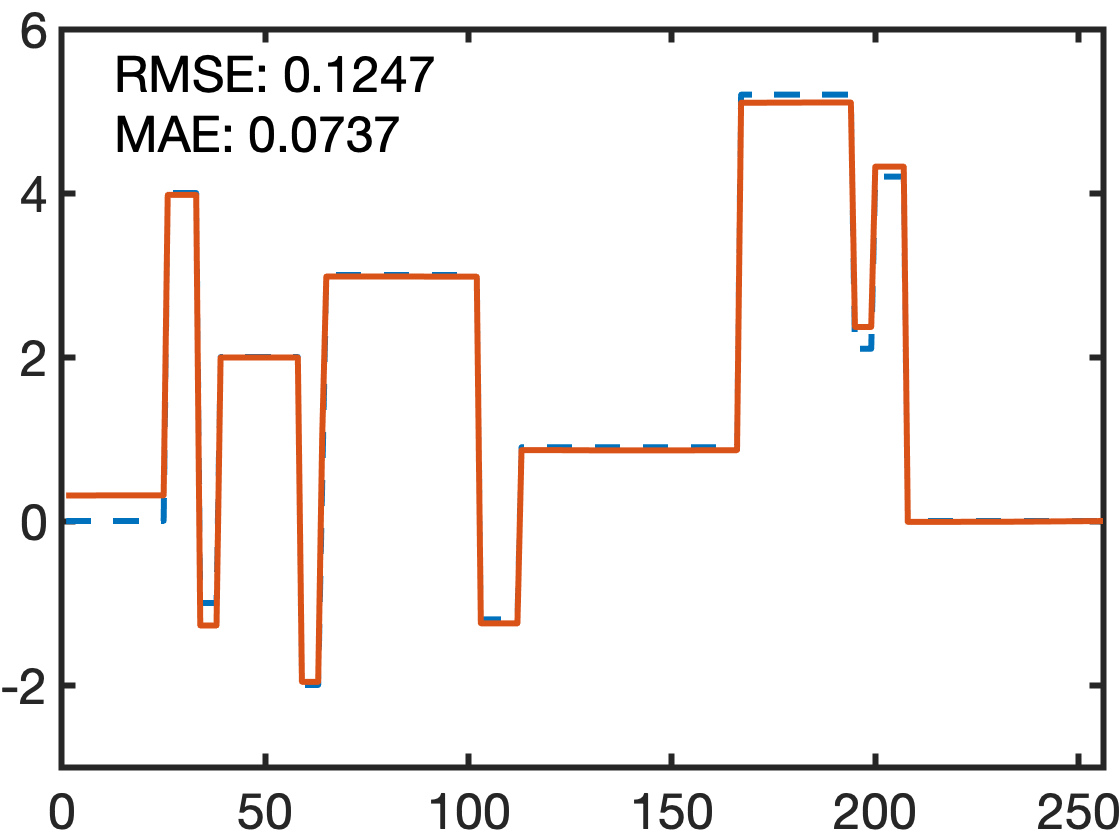}\\
  (f) $TH$
\end{minipage}
\end{tabular}
\caption{
(a) Noisy input signal with discontinuities; 
    (d) Recovered $w$ component (facilitating automatic jump detection); 
    (b)-(c) and (e)-(f) Reconstruction results using classical $TV$, $MC\mbox-TV$, $GME\mbox-TV$, and $TH$, respectively. 
    Dashed lines denote the ground truth signal, while solid lines represent the recovered signals.}
\label{fig:comparison}
\end{figure}

Given an observed signal $\vb\in\mathbb{R}^{n}$ contaminated by noise, i.e., 
$$\vb = \vx + \vn,$$ where $\vx\in\mathbb{R}^{n}$ is the desired signal and $\vn\in\mathbb{R}^{n}$ represents noise or fine-scale details to be removed.

By applying a surrogate function in the gradient domain (see Lemma~\ref{upperbound_lema2_rev}), we propose the following optimization problem:
\begin{equation}\label{gredient_mdl}
(\wvx,\widehat{\vomega})\in\arg\min_{\vx,\vomega}\ \frac{\alpha}{2}\|\vx-\vb\|_2^2 + \frac{1}{\mu^2}\|(\mathbf{1}-\vomega)\circ \nabla \vx\|_2^2 + \|\vomega\|_0,
\end{equation}
where $\nabla$ denotes the discrete gradient operator and $\alpha>0$ is a regularization parameter. Here, the auxiliary variable $\vomega\in\rr^n$ plays a key role in adaptively distinguishing between edges and non-edge regions without requiring any prior knowledge of their number or locations.
By employing the BCD scheme, the iterative updates are derived as:
\[
\left\{
\begin{aligned}
&\wvx^{j+1} = \Bigl(\operatorname{I} + \lambda\,\nabla^{\top}\operatorname{diag}(\mathbf{1}-\widehat{\vomega}^{j})^2 \nabla\Bigr)^{-1}\vb,\\[1mm]
&\widehat{\vomega}^{j+1} = \cH_{\mu}(\nabla \wvx^{j+1}),
\end{aligned}
\right.
\]
where $\lambda=\tfrac{2}{\alpha\mu^2}$, and $\cH_{\mu}$ is defined in \eqref{porpoptw}.

Building upon the comprehensive experimental framework introduced above, we compare our method with several state-of-the-art denoising and smoothing techniques. The following experiments on both one-dimensional (1D) and two-dimensional (2D) signals evaluate the ability of $TH$ to preserve sharp edges while effectively suppressing noise and fine-scale details.

For 1D experiments, the true signal is a piecewise constant signal generated by the \texttt{MakeSignal} function from the Wavelab software library. The noisy observation (see Figure~\ref{fig:comparison}(a)) is obtained by contaminating the true signal with additive white Gaussian noise ($\sigma=0.5$).  
As shown in Figure~\ref{fig:comparison}, our $TH$ method not only achieves the best performance in terms of root-mean-square error (RMSE) and mean absolute error (MAE) but also uniquely preserves sharp edges. This advantage is attributed to the auxiliary variable $\vomega$ (see Figure~\ref{fig:comparison}(d), introduced in Lemma~\ref{upperbound_lema2_rev}), which facilitates accurate edge detection without any prior knowledge regarding the number or locations of discontinuities. In contrast,  classical $TV$ tends to underestimate discontinuities and does not yield a piecewise constant reconstruction, whereas both $MC\mbox{-}TV$ and $GME\mbox{-}TV$ produce reconstructions closer to the ground truth, yet still inferior to the performance of the $TH$.

% Defining a new command for adding images with spy glasses
\newcommand{\addimwoman}[1]{
  \begin{tikzpicture}[spy using outlines={rectangle, magnification=4, size=37.5pt, every spy on node/.append style={very thick}}]
    \node[anchor=south west, inner sep=0] at (0,0)
      {\includegraphics[height=76pt]{#1}};
    \spy [yellow] on (11pt,53pt) in node [right] at (0pt,-18pt);
    \spy [blue] on (25pt,28pt) in node [right] at (38pt,-18pt);
    \spy [red] on (40pt,35pt) in node [right] at (77pt,-18pt);
  \end{tikzpicture}
}

% Creating the figure with a tabular layout
\begin{figure}[htb]
  \centering
  \setlength{\tabcolsep}{0.0em}
  \begin{tabular}{ccc}
    \addimwoman{figures1125/woman_yt.PNG} & 
    \addimwoman{figures1125/woman_bf.PNG} & 
    \addimwoman{figures1125/woman_GF.PNG} \\
    (a) Original image & (b) BF \cite{tomasi1998bilateral} & (c) GF \cite{6319316} \\
    \addimwoman{figures1125/woman_TV.PNG} & 
    \addimwoman{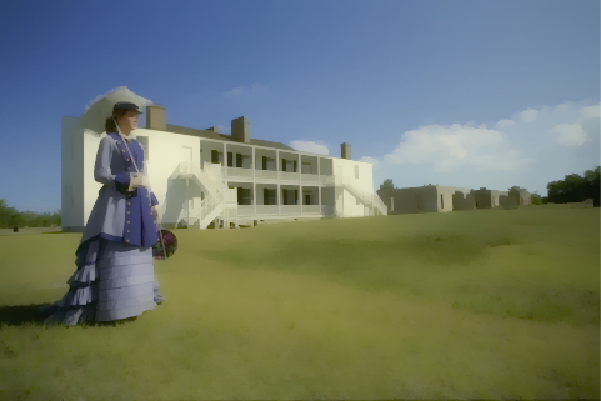} & 
    \addimwoman{figures1125/woman_GS.PNG} \\
    (d) TV \cite{rudin1992nonlinear} & (e) WLS \cite{farbman2008edge} & (f) GSF \cite{9490302} \\
    \addimwoman{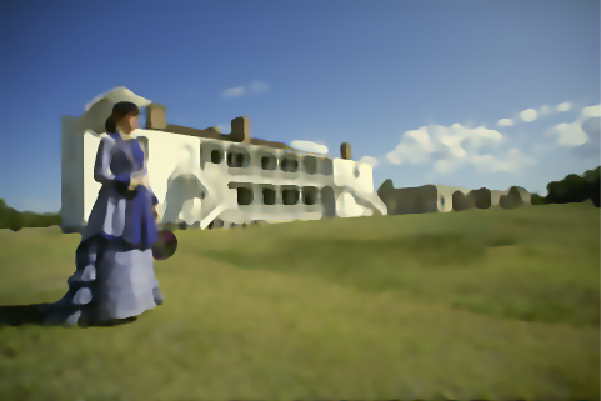} &
    \addimwoman{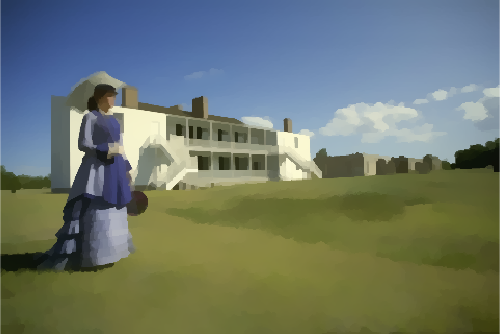} &
    \addimwoman{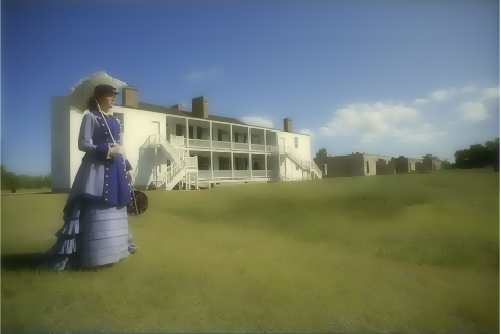} \\
    (g) RGF \cite{zhang2014rolling} & (h) PTF \cite{10.1145/3592120} & (i) ILS \cite{10.1145/3388887}  \\
    \addimwoman{figures1125/woman_CSGIS.JPG} & 
    \addimwoman{figures1125/woman_EH.JPG} &
    \addimwoman{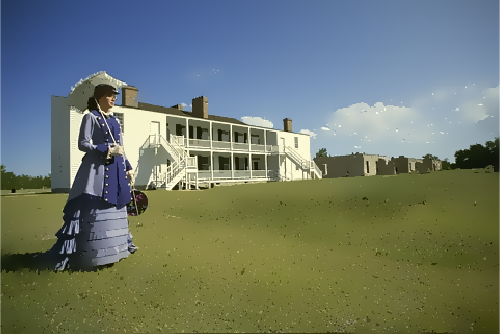}  \\
    (j) CSGIS-Net \cite{https://doi.org/10.1111/cgf.14681} & (k) EH-Net \cite{9451539}  & (l) TH  \\
  \end{tabular}
  \caption{{Comparison of smoothing performance across different methods. }}
  \label{women}
\end{figure}

For 2D experiments, we present the smoothing results on a natural image in Figure~\ref{women}. Three regions of interest are highlighted:
{
\begin{enumerate}[1),leftmargin=*]
    \item building-sky boundary (yellow): the proposed $TH$ method effectively preserves sharp edges, in contrast to the $RGF$, $ILS$, $CSGIS\mbox{-}Net$, and $EH\mbox{-}Net$, which significantly blur the boundary and content. Other methods, such as $BF$, $GSF$, $PTF$, and $TV$, also exhibit edge-preserving capabilities, whereas $GF$ (Figure \ref{women}(c)) and $WLS$ (Figure \ref{women}(e)) maintain edges but with less clarity. 
    \item clothing details (blue): the $TH$ method smooths wrinkles while maintaining distinct layer boundaries. Although $BF$, $GF$, $ILS$, $CSGIS\mbox{-}Net$, and $EH\mbox{-}Net$ preserve edges, they do not smooth fine details as effectively. Meanwhile, $WLS$ (Figure \ref{women}(e)) demonstrates weak edge preservation with insufficient wrinkle smoothing. The $RGF$ (Figure \ref{women}(g)) excessively blurs details, erasing buttons and skirt edges, while $TV$, $GSF$, and $PTF$ methods oversmooth the skirt, distorting a single layer into multiple parts and losing button details (Figures \ref{women}(d), \ref{women}(f) and \ref{women}(h)).
    \item staircase structure (red): the $TH$ model not only retains the staircase structure but also smooths the surrounding textures. Other methods either oversmooth or fail to capture these details. The $GF$ and $CSGIS\mbox{-}Net$ also perform well in this region, $BF$ fails to adequately smooth the lawn texture, and other methods do not maintain the staircase details as effectively.
\end{enumerate}
}

\section{Conclusions}\label{sec:concl}

A truncated Huber penalty framework for sparse signal recovery has been considered in this paper. The proposed penalty function combines the unbiasedness of non-convex metrics with the computational efficiency of convex methods by penalizing signal entries based on their magnitudes.
Theoretically, we have proven that any $s$-sparse solution remains local optima under TH, differentiability at convergence points, and finite-step convergence of the proposed block coordinate descent algorithm under spark conditions. 
These theoretical insights are complemented by comprehensive numerical validations.
In noise-free settings, the method achieves fast recovery, outperforming most methods nearly 10 times. For noisy observations, it maintains the lowest RRE the entire range of SNR levels.
{We proposed a weighted extension of the $TH$ penalty ($WTH$), inspired by sorted penalties. The $WTH$ model adaptively assigns penalties to signal components and consistently improves upon the original $TH$ and other baseline methods across various noise levels.}
Extensions to the gradient highlight the framework’s versatility, where edge preservation and noise suppression outperform all the compared methods. The continuation strategy for $\mu$ ensures numerical stability, though regularization parameter tuning remains a practical consideration. 
%Future work will explore adaptive $\mu$-selection schemes, analyze the theoretical properties of its gradient domain extension, integration with deep learning approach, and applications to other image processing tasks. 
{Future work will explore adaptive $\mu$-selection schemes, deeper analysis of the weighted model’s theoretical guarantees, theoretical properties of its gradient domain extension, integration with deep learning approaches, and applications to other image processing tasks.}

\bibliographystyle{siamplain}
\bibliography{reference}
\end{document}